\theoremstyle{plain}
\newtheorem{theorem}{Theorem}[section]
\newtheorem{proposition}{Proposition}[section]
\newtheorem{lemma}{Lemma}[section]
\newtheorem{corollary}{Corollary}[section]
\theoremstyle{remark}
\newtheorem{remark}{Remark}[section]
\newtheorem{claim}{Claim}[section]
\theoremstyle{definition}
\newtheorem{definition}{Definition}[section]
\newtheorem{example}{Example}[section]
\newcommand{\id}{\mathrm{id}}
\renewcommand{\int}[1]{\mathrm{int}(#1)}
\newcommand{\odd}[1]{\partial_\mathrm{odd}(#1)}
\newcommand{\oddn}[2]{\partial^{#1}_\mathrm{odd}(#2)}
\newcommand{\bd}[1]{\mathrm{bd}(#1)}
\newcommand{\op}[1]{\mathrm{op}(#1)}
\newcommand{\Ne}{\mathcal{N}}
\newcommand{\Sp}{\mathbb{S}}
\newcommand{\BB}{\mathbb{B}}
\newcommand{\unit}{\odot}
\newcommand{\R}{\mathbb{R}}
\newcommand{\Z}{\mathbb{Z}}
\newcommand{\N}{\mathbb{N}}
\newcommand{\RZ}{\mathbb{T}}
\newcommand{\im}{\mathop{\mathrm{im}}}
\newcommand{\Tor}{\mathop{\mathrm{Tor}}}
\newcommand{\C}[1]{\mathsf{C}(#1)}
\newcommand{\OC}[1]{\mathsf{OC}(#1)}
\newcommand{\sus}{\mathsf{S}}
\begin{document}

\title{The surjection property and computable type}

\author{Djamel Eddine Amir and Mathieu Hoyrup\\Universit\'e de Lorraine, CNRS, Inria, LORIA, Nancy, 54000, France}

\maketitle
\begin{abstract}
We provide a detailed study of two properties of spaces and pairs of spaces, the surjection property and the~$\epsilon$-surjection property, that were recently introduced to characterize the notion of computable type arising from computability theory. For a class of spaces including the finite simplicial complexes, we develop techniques to prove or disprove these properties using homotopy and homology theories, and give applications of these results. In particular, we answer an open question on the computable type property, showing that it is not preserved by taking products. We also observe that computable type is decidable for finite simplicial complexes.
\end{abstract}

\section{Introduction}

The surjection property and the~$\epsilon$-surjection property are topological properties of spaces and pairs of spaces, introduced in \cite{AH22} to derive topological characterizations of a property from computability theory first studied by Miller \cite{Miller02} and Iljazovi\'c \cite{Iljazovic13}, called \emph{computable type}. Informally, a compact metrizable space~$X$ has computable type if for every homeomorphic copy~$X'$ of the space in the Hilbert cube~$[0,1]^\N$, if the metric balls that are disjoint from~$X'$ can be enumerated by a Turing machine, then the metric balls intersecting~$X'$ can also be enumerated by a Turing machine. Miller \cite{Miller02} proved that~$n$-dimensional spheres have computable type, and Iljazovi\'c \cite{Iljazovic13} proved that closed~$n$-manifolds have computable type for all~$n\in\N$.

It was proved in \cite{AH22} that a finite simplicial complex~$X$ has computable type if and only if it satisfies the~$\epsilon$-surjection property for some~$\epsilon>0$, which means that every continuous function~$f:X\to X$ which is~$\epsilon$-close to the identity must be surjective. It was also proved that this property holds if and only if every star, which is a cone~$\C{L}$, satisfies the surjection property, which means that every continuous function~$f:\C{L}\to\C{L}$ which is the identity on~$L$ is surjective.

The purpose of this article is to further study the surjection and~$\epsilon$-surjection properties, to provide techniques to establish or refute them notably using homotopy and homology theories, and to present applications of these techniques. The results apply to finite simplicial complexes, and slightly larger classes of compact metrizable spaces.

One of the main applications of our results is that the computable type property is not preserved under products, answering a question raised by \v{C}elar and Iljazovi\'{c} in \cite{CI21}. Namely, there exists a finite simplicial complex~$X$ such that~$X$ has computable type, but its product with the circle~$X\times \Sp_1$ does not. This result heavily relies on the properties of the suspension homomorphism between homotopy groups of spheres.

Another application is that the ($\epsilon$-)surjection property holds when the space is a union of homology cycles, implying in particular that in a simplicial complex~$K$, if every~$n$-simplex belongs to an even number of maximal~$(n+1)$-simplices, then~$K$ has computable type.

The reduction to homotopy also implies that the computable type property is decidable for finite simplicial complexes, as a consequence of the decidability of homotopy of simplicial maps \cite{FV20}.

The computable type property, the~$\epsilon$-surjection property and all the results are more generally stated for pairs~$(X,A)$ consisting of a compact metrizable space~$X$ and a compact subset~$A$.

%
%
%
%
%
%
%
%
%
%

Some of the results were announced in the article \cite{AH22} for simplicial complexes and are presented here with complete proofs and for slightly more general spaces.

The article is organized as follows. In Section \ref{sec_surjection_property} we investigate general aspects of the ($\epsilon$-)surjection property, such as their preservation under countable unions. In Section \ref{sec_conical} we show that for spaces that are finite unions of cones, the~$\epsilon$-surjection property is equivalent to the surjection property of each cone. In Section \ref{sec_cones} we investigate the surjection property of cones and obtain precise relationships with homotopy and homology theories.  In Section \ref{sec_counter_examples} we apply these results to a family of spaces which is a source of counter-examples. In Section \ref{sec_computable_type} we give applications to the computabe type property. We end with a conclusion in Section \ref{sec_conclusion}.

We include an appendix which contains results that are classical or folklore, but which are stated slightly differently in the literature, and therefore deserve some explanations. Section \ref{sec_hom} contains results about homotopy of functions, Section \ref{sec_cone_join} is about the cone and join of spaces, Section \ref{sec_hopf} contains Hopf's extension and classification theorems and Section \ref{sec_coef} discusses the choice of the coefficients in homology groups.

\subsection{Preliminaries}

We will work with compact metrizable spaces only. A \textbf{compact pair}~$(X,A)$ consists of a compact metrizable space~$X$ and a compact subset~$A\subseteq X$.

Every compact metrizable space embeds in the \textbf{Hilbert cube}~$Q=[0,1]^\N$ endowed with the product topology, and with the complete metric~$d(x,y)=\sum_{i\in\N}2^{-i}|x_i-y_i|$, where~$x=(x_i)_{i\in\N}$ and~$y=(y_i)_{i\in\N}$. If~$X$ is a compact space and~$f,g:X\to Q$ are continuous functions, then we let~$d_X(f,g)=\max_{x\in X} d(f(x),g(x))$. 

If~$(X,d)$ is a metric space,~$A\subseteq X$ and~$r>0$, let~$\Ne(A,r)=\{x\in X:d(x,A)<r\}$.

If two topological spaces~$X$ and~$Y$ are homeomorphic, we write~$X\cong Y$.

A compact metrizable space~$X$ is an \textbf{Absolute Neighborhood Retract (ANR)} if every (equivalently, some) homeomorphic copy~$X_0\subseteq Q$ of~$X$ is a retract of an open set~$U\subseteq Q$ containing~$X_0$.

\subsubsection{\label{sec_euclidean}Euclidean points and regular cells}
We introduce a class of spaces for which most points have a Euclidean neighborhood. Most of the results will apply to these spaces.
\begin{definition}
Let~$X$ be a topological space and~$n\in\N$,~$n\geq 1$. A point~$x\in X$ is \textbf{$n$-Euclidean} if it has an open neighborhood that is homeomorphic to~$\R^n$. A point is \textbf{Euclidean} if it is~$n$-Euclidean for some~$n$.

A space is \textbf{almost Euclidean} if the set of Euclidean points is dense. A space is \textbf{almost $n$-Euclidean} if the set of~$n$-Euclidean points is dense. A pair~$(X,A)$ is almost Euclidean (resp.~almost $n$-Euclidean) if~$X\setminus A$ is almost Euclidean (resp.~almost $n$-Euclidean).
\end{definition}

Every CW-complex without isolated point is almost Euclidean and every~$n$-manifold is almost~$n$-Euclidean.

\begin{definition}
Let~$X$ be a topological space and~$n\geq 1$. A set~$C\subseteq X$ is a~\textbf{regular $n$-cell} if there is a homeomorphism~$f:\BB_n\to C$ such that~$f(\BB_n\setminus \Sp_{n-1})$ is an open subset of~$X$. The set~$f(\Sp_{n-1})$ is the \textbf{border} of the cell and is denoted by~$\bd{C}$. The set~$f(\BB_n\setminus \Sp_{n-1})$ is the corresponding \textbf{open cell} and is denoted by~$\op{C}$. A \textbf{regular cell} is a regular $n$-cell for some~$n\geq 1$.
\end{definition}

Note that~$\bd{C}$ and~$\op{C}$ always contain the topology boundary and the interior of~$C$ respectively, but do not always coincide with them. For instance, in a simplicial complex, a free face of a simplex is contained in its border but not in its boundary.

A point~$x\in X$ is~$n$-Euclidean if and only if~$x$ belongs to~$\op{C}$ for some~$n$-cell~$C\subseteq X$.

\section{The (\texorpdfstring{$\epsilon$}{epsilon}-)surjection property}\label{sec_surjection_property}

The surjection property and the $\epsilon$-surjection property were defined in~\cite{AH22}. We recall their definitions and develop techniques to prove or disprove these properties.
\begin{definition}
A pair $(X,A)$ has the \textbf{surjection property}, if every continuous
function~$f:X\to X$ such that~$f|_{A}=\id_{A}$ is surjective.

Let~$(X,d)$ be a metric space and~$A\subseteq X$ a closed set. For~$\epsilon>0$, the pair~$(X,A)$ has the \textbf{$\epsilon$-surjection property} if every continuous function~$f:X\to X$ such that~$f|_{A}=\id_{A}$ and~$d(f,\id_{X})<\epsilon$ is surjective. The space~$X$ has the~$\epsilon$-surjection property if the pair~$(X,\emptyset)$ does.
\end{definition}

Of course, the surjection property implies the~$\epsilon$-surjection property for any~$\epsilon>0$.

\begin{example}[Ball]\label{example ball sphere}
For every~$n\in\mathbb{N}$, the pair~$(\BB_{n+1},\Sp_{n})$ has the surjection property. It is a consequence of  the fact that~$\Sp_n$ is not a retract of~$\BB_{n+1}$, but is a retract of any proper subset of~$\BB_{n+1}$ containing~$\Sp_n$.

\end{example}

\begin{example}[Sphere]
Let~$n\in\N$ and let~$d$ be a compatible metric on~$\Sp_n$. If~$\epsilon$ is smaller than half the distance between every pair of antipodal points, then~$\Sp_n$ has the~$\epsilon$-surjection property. It is a consequence of Borsuk-Ulam's theorem.
\end{example}

Although the~$\epsilon$-surjection property depends on the metric, quantifying over~$\epsilon$ only depends on the topology induced by the metric.
\begin{proposition}[Proposition~3.3 in~\cite{AH22}]
Let~$(X,A)$ be a compact pair. Whether there exists~$\epsilon>0$ such that~$(X,A)$ has the~$\epsilon$-surjection property does not depend on the choice of a compatible metric on~$X$. 
\end{proposition}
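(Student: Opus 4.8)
The plan is to isolate the only metric-dependent ingredient of the definition and then transport it across metrics using compactness. Fix two compatible metrics $d$ and $d'$ on~$X$. The hypotheses that~$f:X\to X$ is continuous and satisfies~$f|_A=\id_A$, as well as the conclusion that~$f$ is surjective, are purely topological and hence identical whether we work with~$d$ or~$d'$. The only clause that refers to the metric is the smallness condition~$d(f,\id_X)<\epsilon$, so the whole argument reduces to comparing this condition for the two metrics.

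First I would assume that~$(X,A)$ has the~$\epsilon$-surjection property with respect to~$d$, and aim to produce some~$\delta>0$ for which it has the~$\delta$-surjection property with respect to~$d'$. The key observation is that since~$X$ is compact, the identity map~$\id:(X,d')\to(X,d)$ is a continuous map between compact metric spaces and is therefore \emph{uniformly} continuous. Applying uniform continuity to~$\epsilon/2$ yields a~$\delta>0$ such that~$d'(x,y)<\delta$ implies~$d(x,y)<\epsilon/2$ for all~$x,y\in X$.

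I would then verify the claim directly. Let~$f:X\to X$ be continuous with~$f|_A=\id_A$ and~$d'(f,\id_X)<\delta$. For every~$x\in X$ we have~$d'(f(x),x)<\delta$, hence~$d(f(x),x)<\epsilon/2$, and taking the supremum over~$x$ gives~$d(f,\id_X)\leq\epsilon/2<\epsilon$. The~$\epsilon$-surjection property for~$d$ then forces~$f$ to be surjective, so~$(X,A)$ indeed has the~$\delta$-surjection property for~$d'$. Exchanging the roles of~$d$ and~$d'$ gives the converse implication, which shows that the existence of a working~$\epsilon$ is a property of the pair~$(X,A)$ independent of the compatible metric.

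There is no genuine obstacle here; the argument is routine once compactness is brought in. The one point requiring a little care is the passage to the supremum: a naive application of uniform continuity to~$\epsilon$ itself would only give~$d(f,\id_X)\leq\epsilon$, so I route through~$\epsilon/2$ (any fixed~$\epsilon''<\epsilon$ works) to preserve the strict inequality. The essential structural input is compactness of~$X$, without which uniform continuity—and hence the uniform conversion of~$\delta$-closeness into~$\epsilon$-closeness—could fail.
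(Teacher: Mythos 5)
Your proof is correct: the paper itself gives no proof here (it cites Proposition~3.3 of~\cite{AH22}), and your argument via uniform continuity of the identity map $\id:(X,d')\to(X,d)$ on the compact space~$X$ is exactly the standard route one expects for this statement. The only metric-dependent clause is indeed $d(f,\id_X)<\epsilon$, and your $\epsilon/2$ safeguard for the strict inequality is harmless (in fact unnecessary, since the paper defines $d_X$ as a maximum over the compact space~$X$, so the supremum is attained).
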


When the pair is well-behaved, the condition~$f|_A=\id_A$ can be replaced by the weaker condition~$f(A)\subseteq A$.

\begin{lemma}\label{lem_id_vs_contained}
Let~$(X,A)$ be a compact pair satisfying the homotopy extension property and assume that~$A$ is an ANR whose interior is empty. The following statements are equivalent:
\begin{enumerate}
\item There exists~$\epsilon>0$ such that~$(X,A)$ has the~$\epsilon$-surjection property,
\item There exists~$\epsilon>0$ such that every continuous function~$f:(X,A)\to (X,A)$ satisfying~$d(f,\id_X)<\epsilon$ is surjective.
\end{enumerate}
\end{lemma}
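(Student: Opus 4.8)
The plan is to prove the two implications of the equivalence. The direction from (2) to (1) is immediate: the hypothesis of (1), namely that $f|_A = \id_A$, is strictly stronger than $f(A)\subseteq A$, so any function witnessing a failure of (1) also witnesses a failure of (2); equivalently, the family of functions quantified over in (1) is a subfamily of those in (2), so surjectivity of all functions in the larger family of (2) trivially forces surjectivity of all functions in the smaller family of (1), with the same $\epsilon$.

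The substantive direction is from (1) to (2). Here I would argue by contraposition: assuming that for every $\epsilon>0$ there is a non-surjective continuous map $f:(X,A)\to(X,A)$ with $d(f,\id_X)<\epsilon$, I want to manufacture, for every $\delta>0$, a non-surjective continuous map $g:X\to X$ with $g|_A=\id_A$ and $d(g,\id_X)<\delta$. The idea is to take such an $f$ for a suitably small $\epsilon$ and deform it so that it becomes the identity on $A$, while keeping it close to the identity and keeping it non-surjective. First I would use that $f(A)\subseteq A$ together with $d(f,\id_X)<\epsilon$ to control how far $f|_A$ is from $\id_A$: the restriction $f|_A:A\to A$ is a self-map of $A$ that is $\epsilon$-close to the identity of $A$. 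Since $A$ is an ANR, self-maps of $A$ that are sufficiently close to $\id_A$ are homotopic to $\id_A$, and in fact one can choose a homotopy $H:A\times[0,1]\to A$ from $f|_A$ to $\id_A$ whose tracks have small diameter (this is the standard ``nearby maps into an ANR are homotopic through a small homotopy'' fact). Then I would invoke the homotopy extension property of the pair $(X,A)$ to extend this homotopy to a homotopy $\widehat H:X\times[0,1]\to X$ starting at $f$, obtaining $g=\widehat H(\cdot,1)$ which satisfies $g|_A=\id_A$ by construction.

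It remains to verify the two properties of $g$. For the metric estimate, $g$ differs from $f$ only through the extended homotopy, and by keeping both $\epsilon$ and the diameter of the homotopy tracks on $A$ small one keeps $d(g,f)$ small, hence $d(g,\id_X)\le d(g,f)+d(f,\id_X)<\delta$. The delicate point is non-surjectivity of $g$: we know $f$ misses some point, but the extension $\widehat H$ could in principle sweep the image over the missed point. This is exactly where the hypothesis that $A$ has empty interior enters. Since $f$ is non-surjective and $X\setminus f(X)$ is a nonempty open set, and since $A$ has empty interior, one can find a missed point $p\in X\setminus f(X)$ that lies in $X\setminus A$ and is bounded away from both $A$ and $f(X)$; choosing $\epsilon$ small enough that the track diameters are smaller than this gap guarantees that the points newly introduced into the image by $\widehat H$ (which originate from $f(X)$ and move a controlled distance) cannot reach $p$, so $g$ still misses $p$.

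The main obstacle is this last non-surjectivity argument: the homotopy extension procedure is not canonical and does not come with an a priori bound on how much the image of points in $X\setminus A$ is displaced, so one must be careful to choose the witnessing missed point $p$ inside $X\setminus A$ (using empty interior of $A$) and to quantify the closeness of $f$ to $\id_X$, and the smallness of the homotopy on $A$, so that the total displacement of the image under $\widehat H$ stays strictly below the distance from $f(X)$ to $p$. Getting a uniform control on the displacement of $\widehat H$ away from $A$ — rather than just on $A$ where the homotopy is prescribed — is what requires the ANR structure of $A$ together with a careful choice of the extension, and is the technically demanding step of the proof.
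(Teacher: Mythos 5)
Your implication $(2)\Rightarrow(1)$ and the overall strategy for $(1)\Rightarrow(2)$ (contraposition, homotope $f|_A$ to $\id_A$ using that $A$ is a compact ANR, extend by the homotopy extension property) match the paper. However, your non-surjectivity argument has a genuine gap. You propose to pick a missed point $p\in X\setminus(\im(f)\cup A)$ and make the homotopy tracks smaller than $d(p,\im(f)\cup A)$. This cannot be arranged: the admissible track size is governed by how close $f|_A$ is to $\id_A$, i.e.\ by the $\epsilon$ you must fix \emph{before} choosing $f$ (the ANR lemma says ``for every $\eta$ there is $\delta(\eta)$ such that $\delta(\eta)$-close maps are $\eta$-homotopic'', not that $\delta$-close maps are $O(\delta)$-homotopic), whereas the gap $d(p,\im(f))$ depends on the particular $f$ and can be arbitrarily small — e.g.\ $f$ may miss only a ball of radius far smaller than any track bound achievable for that $f$. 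Since $\im(g)\subseteq\Ne(\im(f),\eta)$ where $\eta$ is the track bound, nothing prevents $g$ from becoming surjective. There is no way to break this circularity by shrinking $\epsilon$, because each new $f$ comes with a new, possibly smaller, gap.

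The paper avoids this entirely by a small but essential change of target: it applies the (controlled) homotopy extension property for the pair $(X,A)$ with values in the subspace $Y=A\cup\im(f)$ rather than in $X$. The homotopy $h_t:A\to A$ already lands in $Y$, and $f$ co-restricts to a map $X\to Y$, so the extended homotopy $H_t:X\to Y$ and in particular $g=H_1$ take values in $Y$. Since $A$ has empty interior, $X$ is compact and $f$ is not surjective, $Y$ is a proper (closed) subset of $X$, so $g$ is automatically non-surjective — no metric separation from a missed point is needed; the metric control is only used to bound $d(g,\id_X)$. Your write-up correctly isolates non-surjectivity as the delicate step, but the resolution you sketch would fail; replacing it with the ``extend into $A\cup\im(f)$'' device repairs the proof.
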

When~$A$ has non-empty interior, the result still holds if one replaces the surjectivity of~$f$ by~$X\setminus A\subseteq\im(f)$ in condition 2.

\begin{proof}
Of course~$2.$ implies~$1.$, we prove the other direction. We assume that~$2.$ does not hold and prove that~$1.$ does not hold. Let~$\epsilon>0$. Let~$\delta<\epsilon/2$ be such that functions to~$A$ that are~$\delta$-close are~$\epsilon/2$-homotopic (Lemma \ref{lem_close_homotopic}). Let~$f:(X,A)\to (X,A)$ be a non-surjective function satisfying~$d(f,\id_X)<\delta$. There is an~$\epsilon/2$-homotopy~$h_t:A\to A$ from~$h_0=f|_A$ to~$h_1=\id_A$. We then apply Lemma \ref{lem_controlled_hep}, which implies that there is an~$\epsilon/2$-homotopy~$H_t:X\to A\cup \im(f)$ extending~$h_t$, from~$f$ to some~$g:X\to A\cup \im(f)$. As~$A$ has empty interior,~$X$ is compact and~$f$ is not surjective,~$A\cup\im(f)$ is a proper subset of~$X$ so~$g$ is a non-surjective function to~$X$. One has~$g|_A=h_1=\id_A$ and~$d(g,\id_X)\leq d(g,f)+d(f,\id_X)<\epsilon$, so~$(X,A)$ does not satisfy the~$\epsilon$-surjection property.
%
%
%
\end{proof}

The product of two pairs is~$(X,A)\times (Y,B)=(X\times Y,X\times B\cup A\times Y)$. For well-behaved pairs, the~$\epsilon$-surjection property of the product implies the~$\delta$-surjection property of the two pairs for some~$\delta$. 

\begin{proposition}[Product]\label{prop_product}
Let~$X,Y$ and~$A\subsetneq X,B\subsetneq Y$ be compact ANRs.

If~$(X,A)\times (Y,B)$ has the~$\epsilon$-surjection property, then~$(X,A)$ and~$(Y,B)$ satisfy the~$\delta$-surjection for some~$\delta>0$.
\end{proposition}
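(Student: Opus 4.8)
The plan is to argue by contraposition, treating the two factors symmetrically, so that it suffices to show: if $(X,A)$ fails the $\delta$-surjection property for every $\delta>0$, then the product pair fails the $\epsilon$-surjection property for every $\epsilon>0$. The idea is to turn a non-surjective near-identity self-map $f$ of $X$ fixing $A$ into a non-surjective near-identity self-map $F$ of $X\times Y$ fixing the whole distinguished set $X\times B\cup A\times Y$. The naive candidate $F(x,y)=(f(x),y)$ fixes $A\times Y$ and is non-surjective, but it fails to be the identity on $X\times B$ (where $f(x)\neq x$ in general); repairing this without destroying non-surjectivity is the crux of the argument.

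To repair it, first I would, for a parameter $\delta>0$ to be fixed later, choose a non-surjective $f:X\to X$ with $f|_A=\id_A$ and $d_X(f,\id_X)<\delta$. Using that $X$ is an ANR, I would produce a homotopy $(f_t)_{t\in[0,1]}$ from $f_0=\id_X$ to $f_1=f$ that is \emph{relative to $A$} (i.e.\ $f_t|_A=\id_A$ for all $t$) and stays $\delta'$-close to the identity, where $\delta'\to 0$ as $\delta\to 0$. Concretely, embedding $X$ in $Q$ with a retraction $r:U\to X$ of a neighborhood, the straight-line combination $(x,t)\mapsto(1-t)x+tf(x)$ lands in $U$ for $\delta$ small, is constant in $t$ on $A$ because $f$ fixes $A$, and pushing it back by $r$ yields such an $(f_t)$; this is the relative form of Lemma~\ref{lem_close_homotopic}. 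Next, since $B\subsetneq Y$ is closed in the compact space $Y$, for small enough $r>0$ the set $Y\setminus\Ne(B,r)$ is nonempty, so I can fix a continuous $\phi:Y\to[0,1]$ with $\phi\equiv 0$ on $B$ and $\phi\equiv 1$ on $Y\setminus\Ne(B,r)$. I then define $F(x,y)=(f_{\phi(y)}(x),y)$.

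It remains to verify the three required properties of $F$. It is the identity on $X\times B$ since $\phi=0$ there forces $f_0=\id_X$, and it is the identity on $A\times Y$ since $(f_t)$ is relative to $A$; jointly $F$ fixes $X\times B\cup A\times Y$. It is $\delta'$-close to $\id_{X\times Y}$ because only the $X$-coordinate moves, and it moves by at most $\sup_t d_X(f_t,\id_X)<\delta'$; choosing $\delta$ small enough makes $\delta'<\epsilon$ (after fixing a convenient product metric, which is harmless by the metric-independence of the $\epsilon$-quantifier). Finally $F$ is not surjective: picking $y_0\in Y\setminus\Ne(B,r)$ we have $\phi(y_0)=1$, so the $y_0$-slice of $\im(F)$ is exactly $\im(f)\neq X$, and any $x_0\notin\im(f)$ gives $(x_0,y_0)\notin\im(F)$. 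This contradicts the $\epsilon$-surjection property of the product; running the same argument with the roles of the factors exchanged handles $(Y,B)$.

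The main obstacle is the one already flagged: making $F$ equal to the identity on all of $X\times B\cup A\times Y$ while keeping it non-surjective and uniformly close to the identity. This is precisely what forces the interpolation through a homotopy that is simultaneously relative to $A$ and uniformly small, and it is here that the ANR hypothesis on $X$ is essential, since it is what lets us convert closeness into a small homotopy. A minor secondary point is the bookkeeping of the product metric, which I would dispose of by invoking the fact that the existence of a suitable $\epsilon$ depends only on the topology, not on the chosen compatible metric.
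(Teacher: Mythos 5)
Your proof is correct, and it takes a genuinely different route from the paper's. The paper keeps the naive map $g(x,y)=(f(x),y)$, which only satisfies $g(X\times B\cup A\times Y)\subseteq X\times B\cup A\times Y$ rather than fixing that set pointwise, and then invokes Lemma~\ref{lem_id_vs_contained} to convert this weaker condition into a genuine counterexample; that route requires checking that $X\times B\cup A\times Y$ is a compact ANR (via Borsuk's product and union theorems, using that $A$ and $B$ are ANRs) and that the pair has the homotopy extension property, with the actual homotopy-correction hidden inside the lemma. You instead perform the correction by hand: a small homotopy $f_t$ from $\id_X$ to $f$ relative to $A$ (obtained by pushing the straight-line homotopy in $Q$ back into $X$ with a neighborhood retraction), damped by a Urysohn function of $y$ vanishing on $B$ and equal to $1$ away from $B$, so that the resulting $F(x,y)=(f_{\phi(y)}(x),y)$ is genuinely the identity on $X\times B\cup A\times Y$, stays close to $\id$, and still misses $(x_0,y_0)$ for $x_0\notin\im(f)$ and $y_0$ far from $B$. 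What your approach buys is self-containedness and weaker hypotheses: it never uses that $A$, $B$, or the second factor are ANRs (only that $B$ is a proper closed subset and that the factor being perturbed is an ANR), and it avoids the union theorem for ANRs entirely. What the paper's approach buys is brevity and reusability, since Lemma~\ref{lem_id_vs_contained} is stated once and applied elsewhere; note also that because $X\times B\cup A\times Y$ may have nonempty interior, the paper must use the variant of that lemma where surjectivity is replaced by $Z\setminus C\subseteq\im(g)$, a subtlety your direct construction sidesteps altogether.
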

\begin{proof}
Let~$(Z,C)=(X,A)\times (Y,B)=(X\times Y,X\times B\cup A\times Y)$. Note that~$C$ is a compact ANR: indeed, the product of ANRs is an ANR (Theorem IV.7.1 in \cite{Borsuk67}), so~$X\times B$,~$A\times Y$ and their intersection~$A\times B$ are ANRs, therefore their union is an ANR (Theorem IV.6.1 in \cite{Borsuk67}). We can assume w.l.o.g.~that the metric on the product space is the maximum of the metrics on~$X$ and~$Y$.

We assume that~$(X,A)$ does not satisfy the~$\epsilon$-surjection property for any~$\epsilon>0$, and show that the same holds for~$(Z,C)$. Let~$\epsilon>0$ and~$f:X\to X$ be a non-surjective function such that~$f|_A=\id_A$ and~$d(f,\id_X)<\epsilon$. We naturally define~$g:(Z,C)\to (Z,C)$ by~$g(x,y)=(f(x),y)$. One easily checks that~$g(C)\subseteq C$. As~$d(f,\id_X)<\epsilon$, one has~$d(g,\id_Z)<\epsilon$. The image of~$g$ does not contain~$Z\setminus C$: if~$x_0\in X\setminus \im(f)$ and~$y_0\in Y\setminus B$, then~$(x_0,y_0)\notin\im(g)$. Therefore, we can apply Lemma \ref{lem_id_vs_contained}, implying that~$(Z,C)$ does not satisfy the~$\epsilon$-surjection property for any~$\epsilon>0$.
%
%
\end{proof}

\subsection{The (\texorpdfstring{$\epsilon$}{epsilon}-)surjection property for cone pairs}

%
%
%

Cones have the particular property that they contain arbitrarily small copies of themselves, implying that for any~$\epsilon>0$, the~$\epsilon$-surjection is equivalent to the surjection property. Cones and cone pairs are defined in Section \ref{sec_cone}.
\begin{proposition}\label{prop_cone_epsilon}
Let~$(X,A)$ be a compact pair and~$\epsilon>0$. The cone pair~$\C{X,A}$ has the~$\epsilon$-surjection property iff it has the surjection property.
\end{proposition}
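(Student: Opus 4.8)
The plan is to prove the nontrivial implication, that the $\epsilon$-surjection property forces the surjection property, by contraposition: starting from any continuous $f:\C{X}\to\C{X}$ that fixes the distinguished subset of the cone pair and is \emph{not} surjective, I will manufacture, for the given $\epsilon$, a non-surjective competitor $f_\lambda$ that also fixes the distinguished subset and satisfies $d(f_\lambda,\id)<\epsilon$. Since the converse implication (surjection $\Rightarrow$ $\epsilon$-surjection) is immediate and already noted, this yields the equivalence. I write points of $\C{X}$ as classes $[x,t]$ with $t\in[0,1]$, where $X\times\{1\}$ is collapsed to the apex $\ast$; the base $X\times\{0\}$ and the subcone $\C{A}$ together form the distinguished subset on which maps are required to be the identity.

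The engine is the self-similar structure of the cone. For $\lambda\in(0,1)$ the map $\sigma_\lambda([x,t])=[x,1-\lambda+\lambda t]$ is a homeomorphism of $\C{X}$ onto the top subcone $C_\lambda=\{[x,t]:t\ge 1-\lambda\}$; it fixes the apex, carries the base onto the slice $\{t=1-\lambda\}$, maps $\C{A}$ into itself, and $\operatorname{diam}(C_\lambda)\to 0$ as $\lambda\to 0$ because $\bigcap_\lambda C_\lambda=\{\ast\}$. I then set $f_\lambda$ equal to the identity on $\{t\le 1-\lambda\}$ and equal to the conjugate $\sigma_\lambda\circ f\circ\sigma_\lambda^{-1}$ on $C_\lambda$, so that the perturbation of $f$ is compressed into an arbitrarily small neighbourhood of the apex.

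Three verifications then remain. First, $f_\lambda$ fixes the distinguished subset: on the lower part this is immediate, and on the upper part $\C{A}\cap C_\lambda$ it follows since $\sigma_\lambda^{-1}$ maps back into $\C{A}$, where $f$ is the identity. Second, $d(f_\lambda,\id)\le\operatorname{diam}(C_\lambda)$, since $f_\lambda$ moves no point outside $C_\lambda$ and sends $C_\lambda$ into itself; choosing $\lambda$ with $\operatorname{diam}(C_\lambda)<\epsilon$ gives the required closeness. Third, $f_\lambda$ is not surjective: as $f$ fixes the base, its image contains $X$, so any omitted point $q$ lies off the base; then $\sigma_\lambda(q)$ sits strictly above the slice $\{t=1-\lambda\}$, hence outside $\{t\le 1-\lambda\}$, and, by injectivity of $\sigma_\lambda$ together with $q\notin\im(f)$, it also avoids $\sigma_\lambda(\im(f))$, so it is omitted by $f_\lambda$.

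I expect the one genuinely delicate point to be the \textbf{continuity of $f_\lambda$ across the seam} $\{t=1-\lambda\}$, where the two formulas meet. Agreement there is exactly the statement that $\sigma_\lambda(f(\sigma_\lambda^{-1}([x,1-\lambda])))=[x,1-\lambda]$, i.e.\ that $f([x,0])=[x,0]$; this is where it is essential that the base $X\times\{0\}$ belongs to the distinguished subset on which $f$ is assumed to be the identity. Granting this, the pasting lemma gives continuity and the construction goes through; the shrinking of $\operatorname{diam}(C_\lambda)$ and the transfer of non-surjectivity are then routine.
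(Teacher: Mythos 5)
Your proposal is correct and follows essentially the same route as the paper: both proofs exploit the self-similarity of the cone by conjugating the given non-surjective $f$ with a scaling homeomorphism onto a small subcone at the cone point and extending by the identity elsewhere (the paper's rescaling $g(\delta t,\delta tx)=\delta f(t,tx)$ is exactly your $\sigma_\lambda\circ f\circ\sigma_\lambda^{-1}$ in a different parametrization). Your verification of continuity across the seam and of the diameter bound is a correct, slightly more detailed rendering of what the paper leaves implicit.
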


\begin{proof}
Assume that~$X$ is embedded in~$Q$ and let~$\C{X}=\{(t,t x):t\in[0,1],x\in X\}$. Assume that there is a non-surjective continuous function~$f:\C{X}\to\C{X}$ which is the identity on~$X\cup\C{A}$. For any~$\epsilon>0$, one can define such a function~$g$ which is~$\epsilon$-close to the identity. We decompose~$\C{X}$ as~$\C{X}=C\cup D$ where
\begin{align*}
C&=\{(t,tx):t\in[0,\delta],x\in X\}\\
D&=\{(t,tx):t\in[\delta,1],x\in X\}.
\end{align*}
Note that~$C$ is homeomorphic to~$\C{X}$. We define~$g$ as the identity on~$D$ and as a rescaled version of~$f$ on~$C$, namely~$g(\delta t,\delta tx)=\delta f(t,tx)$ for~$t\in [0,1]$ and~$x\in X$. $g$ is non-surjective, is the identity on~$X\cup\C{A}$, and if~$\delta$ is sufficiently small, then~$g$ is~$\epsilon$-close to the identity.

In Section \ref{sec_cone}, we introduce the symbol~$\unit$ and define~$\C{\unit}=\{0\}$. Note that the proof works when~$A=\unit$ and~$\C{X,\unit}=(\C{X},X\cup \{0\})$ (where~$0$ is the tip of~$\C{X}$).
\end{proof}

The only case when we need to consider~$A=\unit$ is when~$X$ is a singleton.
\begin{proposition}
If~$X$ is not a singleton, then~$\C{X,\unit}$ has the surjection property if and only if~$\C{X,\emptyset}$ has the surjection property.
\end{proposition}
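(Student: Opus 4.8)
The easy direction is the implication from~$\C{X,\emptyset}$ to~$\C{X,\unit}$: a function witnessing the failure of the surjection property for~$\C{X,\emptyset}=(\C X,X)$ need only fix~$X$, whereas one witnessing it for~$\C{X,\unit}=(\C X,X\cup\{0\})$ must additionally fix the tip~$0$. Thus the functions to be tested for~$\C{X,\unit}$ form a subclass of those tested for~$\C{X,\emptyset}$, and the surjection property of~$\C{X,\emptyset}$ immediately yields that of~$\C{X,\unit}$, with no use of the hypothesis on~$X$.

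For the converse I would argue by contraposition: starting from a non-surjective continuous~$f:\C X\to\C X$ with~$f|_X=\id_X$, I would build a non-surjective~$g:\C X\to\C X$ with~$g|_X=\id_X$ and~$g(0)=0$, witnessing the failure of the surjection property for~$\C{X,\unit}$. The construction modifies~$f$ only near the tip, in the spirit of the decomposition used in the proof of Proposition~\ref{prop_cone_epsilon}: I fix a small~$\delta$, write~$\C X=C\cup D$ with~$C=\{(t,tx):t\le\delta\}$ and~$D=\{(t,tx):t\ge\delta\}$, set~$g=f$ on~$D$, and on the small cone~$C$ define~$g$ to be the ``cone-off'' of~$f$ towards the tip, that is, send the tip to~$0$ and radially rescale so that~$g$ agrees with~$f$ on the shared level~$S_\delta=\{(\delta,\delta x):x\in X\}$. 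This~$g$ is continuous, equals~$f$ (hence the identity) on~$X\subseteq D$, and satisfies~$g(0)=0$ by construction.

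The crux is to keep~$g$ non-surjective. The image~$g(C)$ is exactly the union of the radial segments from the tip to the points of~$f(S_\delta)$, and as~$\delta\to0$ this set collapses onto the single radial segment~$[0,q]$ joining the tip to~$q:=f(0)$. Since~$g(D)=f(D)\subseteq f(\C X)$ already misses every point outside the image of~$f$, it suffices to exhibit a point~$p$ missed by~$f$ that does not lie on~$[0,q]$: for such a~$p$, taking~$\delta$ small enough so that~$g(C)$ stays within a small neighbourhood of~$[0,q]$ (by uniform continuity of~$f$) keeps~$p$ outside~$g(C)\cup g(D)$, so~$g$ is non-surjective.

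The one remaining point — and the place where the hypothesis that~$X$ is not a singleton is used — is that the open set~$U=\C X\setminus f(\C X)$ of missed points is never entirely contained in the segment~$[0,q]$. I would prove this by contradiction: if~$U\subseteq[0,q]$ then some interior point~$m=(s_0,s_0x_0)$ of this radial segment is missed by~$f$ and has a whole neighbourhood inside the arc, which forces the base point~$x_0$ to be isolated in~$X$; the radial segment over~$x_0$ is then a whisker attached to the rest of~$\C X$ only at the tip, and~$m$ is a cut point separating~$(1,x_0)$ from the base points lying over~$X\setminus\{x_0\}$, the latter being non-empty precisely because~$X$ is not a singleton. As~$f$ fixes all of~$X$, the connected set~$f(\C X)$ would then meet both sides of this cut while avoiding~$m$, which is impossible. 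Hence a suitable missed point~$p\notin[0,q]$ exists, completing the construction. I expect this connectedness and cut-point step to be the main obstacle, both because it is where the non-singleton hypothesis is essential and because it requires some care about the local structure of the cone along radial segments.
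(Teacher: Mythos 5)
Your proof is correct, but the non-trivial direction follows a genuinely different route from the paper's. The paper homotopes the given $f$ rel $X$ into a proper subspace $Y\subsetneq\C{X}$: it takes $Y=\im(f)$ together with (if the tip is missed) the radial segment from $0$ to $f(0)$, observes that $Y$ contains a path from $0$ to $f(0)$, and invokes the homotopy extension property of $(\C{X},X\cup\{0\})$ to produce a map $g:\C{X}\to Y$ that is the identity on $X\cup\{0\}$; the non-singleton hypothesis is used only to check that $Y$ is proper (the added segment lies over a single base point, so the ray over any other base point escapes $Y$ near the tip). You instead build $g$ explicitly by coning off $f$ below the level $S_\delta$, and the burden shifts to finding a missed point away from the segment $[0,f(0)]$; your cut-point argument for this is where the hypothesis enters, and it is sound: if the open set of missed points were contained in a single radial segment, its base point would be isolated in $X$ and an interior missed point would separate $(1,x_0)$ from the rest of $X$ inside $\C{X}$, contradicting the connectedness of $\im(f)\supseteq X$. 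The two constructions end up with essentially the same image, namely $\im(f)$ plus (a small neighbourhood of) the segment $[0,f(0)]$. Yours avoids the homotopy extension machinery at the price of the more delicate isolated-point analysis and the uniform-continuity estimate controlling $g(C)$; the paper's is shorter once the homotopy extension property is available and localizes the use of the hypothesis to a one-line properness check. Both arguments are valid, and your treatment of the easy direction (the test maps for $\C{X,\unit}$ form a subclass of those for $\C{X,\emptyset}$) matches the observation the paper leaves implicit.
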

\begin{proof}
Assume that~$\C{X,\emptyset}$ does not have the surjection property, and let~$f:\C{X}\to\C{X}$ be a non-surjective continuous function which is the identity on~$X$. We build a non-surjective continuous function~$g:\C{X}\to\C{X}$ which is the identity on~$X\cup\{0\}$.

We are going to define a proper subspace~$Y\subsetneq \C{X}$ that contains~$\im(f)\cup \{0\}$ and contains a path from~$0$ to~$f(0)$. Let then~$i:X\cup\{0\}\to Y$ be the inclusion and~$j:X\cup \{0\}\to Y$ be the identity on~$X$ and send~$0$ to~$f(0)$. The path from~$0$ to~$f(0)$ in~$Y$ induces a homotopy from~$i$ to~$j$. As the pair~$(\C{X},X\cup\{0\})$ has the homotopy extension property, it implies that the inclusion has a continuous extension~$g:\C{X}\to Y$, which is non-surjective when typed as~$g:\C{X}\to\C{X}$, which completes the proof.

We now define the space~$Y$. There are two cases.

First assume that~$0\in \im (f)$. In that case, we simply take~$Y=\im(f)$. Let~$x\in\C{X}$ be such that~$f(x)=0$. The image by~$f$ of the ray from~$0$ to~$x$ in~$\C{X}$ is a path from~$f(0)$ to~$0$, contained in~$Y$.

Now assume~$0\notin\im(f)$. Let~$Y$ be the union of~$\im(f)$ and the ray from~$0$ to~$f(0)$. We need to show that~$Y$ is a proper subset of~$\C{X}$. Let~$a,b$ be two distinct points of~$X$. As~$\im(f)$ is closed and does not contain~$0$, the rays from~$0$ to~$a$ and~$b$ are not contained in~$\im(f)$. $f(0)$ belongs to at most one of them, so the other ray is still not contained in~$Y$. Therefore,~$Y$ is a proper subset of~$\C{X}$.
\end{proof}

\subsection{The (\texorpdfstring{$\epsilon$}{epsilon}-)surjection property and unions}

In certains cases, the ($\epsilon$-)surjection property for a pair~$(X,A)$ can be established by proving the ($\epsilon$-)surjection property  for subpairs covering~$(X,A)$.

The first result holds for cone pairs and countable unions.

\begin{theorem}\label{thm: infinite union}
Let~$(X,A)$ and~$(X_i,A_i)_{i\in\N}$ be compact pairs such that~$X=\bigcup_{i\in\N} X_i$ and~$A=\bigcup_{i\in\N} A_i$. Assume that~$(X,A)$ is almost Euclidean.

If every pair $\C{X_{i},A_{i}}$ has the surjection property, then~$\C{X,A}$ has the surjection property.
\end{theorem}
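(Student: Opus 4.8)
The plan is to argue by contradiction, reducing the surjectivity of an arbitrary $f$ to a purely local statement at a single Euclidean point, and then to import the surjection property of one of the pieces $\C{X_i,A_i}$. Realize $\C{X}$ as $\{(t,tx):t\in[0,1],x\in X\}$ and suppose $f\colon\C{X}\to\C{X}$ is continuous and equal to the identity on $X\cup\C{A}$; I want $f$ surjective. Since $\C{X}$ is compact, $\im(f)$ is closed, and it contains $X\cup\C{A}$. Hence it suffices to show that $\im(f)$ contains the open cone $U=\C{X}\setminus(X\cup\C{A})=\{(t,tx):t\in(0,1),\ x\in X\setminus A\}$. Because $(X,A)$ is almost Euclidean, the Euclidean points are dense in $X\setminus A$, and the cone over an $n$-Euclidean point (with scaling parameter in $(0,1)$) is an $(n{+}1)$-Euclidean point of $U$; thus the Euclidean points are dense in $U$. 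As $\im(f)$ is closed, it is enough to prove $p_0\in\im(f)$ for every Euclidean point $p_0=(t_0,t_0x_0)\in U$.

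Next I would localize the missing point inside one piece using Baire category. Fix such a $p_0$ and a Euclidean chart $V\cong\R^n$ around $x_0$ in $X$. Writing $V=\bigcup_i(V\cap X_i)$ and using that $V$ is a Baire space covered by countably many sets closed in $V$, the union of the interiors $\int{V\cap X_i}$ (taken in $V$) is dense in $V$. Since by the previous paragraph I only need a dense set of points $p_0$, I may assume $x_0$ lies in an open set $V'$, open in $X$, with $V'\subseteq X_i$ for some $i$ and $V'$ homeomorphic to an open subset of $\R^n$. Then the cone over $V'$ with scaling parameter near $t_0$ is a neighborhood $W$ of $p_0$ that is open in $\C{X}$, Euclidean, and contained in $\C{X_i}$; in particular $\C{X}$ and $\C{X_i}$ coincide on $W$. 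Choose a closed ball $\overline B\subseteq W$ centered at $p_0$, with interior $\int B$.

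Finally comes the transfer step, which I expect to be the main obstacle. Assume for contradiction $p_0\notin\im(f)$; shrinking $\overline B$ I may assume $\overline B\cap\im(f)=\emptyset$. I would produce a continuous $g\colon\C{X_i}\to\C{X_i}$ that is the identity on $X_i\cup\C{A_i}$ and misses $p_0$, contradicting the surjection property of $\C{X_i,A_i}$. The natural candidate is to restrict $f$ to $\C{X_i}$ and correct it to a self-map. The difficulty is precisely that $f(\C{X_i})$ need not lie in $\C{X_i}$, so $f|_{\C{X_i}}$ is only a map $\C{X_i}\to\C{X}$ that is the identity on $X_i\cup\C{A_i}$ and avoids $\overline B$. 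The key is to fold the part of the image lying outside $\C{X_i}$ back into $\C{X_i}$ through the puncture at $p_0$: concretely, to post-compose with a retraction of $\C{X}\setminus\int B$ onto $\C{X_i}\setminus\int B$ that is the identity on $X\cup\C{A}$, whose existence I would derive from the local coincidence of $\C{X}$ and $\C{X_i}$ near $p_0$ together with the homotopy extension property. The resulting $g$ would be the identity on $X_i\cup\C{A_i}$, take values in $\C{X_i}$, and still avoid $p_0$, yielding the contradiction. Establishing this correction in the stated generality, equivalently showing that the non-surjectivity of $f$ can be localized to a single piece $\C{X_i}$, is the step where the almost-Euclidean hypothesis and the homotopy extension property do the real work, and is the part I expect to require the most care.
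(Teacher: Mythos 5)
Your setup---reducing surjectivity of $f$ to hitting a dense set of Euclidean points of the open cone, then using Baire category to place such a point $p_0$ over the interior of a single piece $X_i$---is sound and closely parallels the paper's own localization step (the paper works with a regular cell $C$ missed by $f$ and shrinks it into $\int{X_i}\setminus A_i$ by the same Baire argument). The proof breaks down exactly where you flag it: the transfer to $\C{X_i,A_i}$. First, as literally stated the correcting map cannot exist: a retraction of $\C{X}\setminus\int{B}$ onto $\C{X_i}\setminus\int{B}$ takes values in $\C{X_i}$, so it cannot be the identity on $X\cup\C{A}$ unless $X=X_i$. Presumably you mean an ordinary retraction (identity on $\C{X_i}\setminus\int{B}$ only), which would indeed give the contradiction. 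But the existence of such a retraction is not a technicality to be supplied by the homotopy extension property; it is an extension problem of precisely the kind that is obstructed in higher dimensions. A retraction would force the inclusion $\C{X_i}\setminus\int{B}\hookrightarrow\C{X}\setminus\int{B}$ to be injective on all homotopy groups, and nothing in the hypotheses guarantees this: both spaces carry the homology of $\Sp_n$ concentrated on $\partial B$, but their higher homotopy can differ. Indeed, Section \ref{sec_counter_examples} of the paper is devoted to showing that the closely related retractions $X\setminus\op{C}\to\bd{C}$ can fail to exist even when the corresponding quotient maps are null-homotopic (the space $X_{2h}$ built from the double of the Hopf map), so an argument hinging on producing a retraction in this generality cannot go through without substantial further input.

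The paper sidesteps the issue entirely by first establishing Theorem \ref{thm_surj_prop_cone}: for an almost Euclidean pair, $\C{Y,B}$ fails the surjection property if and only if some quotient map $q_C:(Y,B)\to(\Sp_n,s)$ associated to a regular $n$-cell $C\subseteq Y\setminus B$ is null-homotopic. With that characterization the transfer step is immediate: a null-homotopy of $q_{C'}:(X,A)\to(\Sp_n,s)$ simply restricts to a null-homotopy of $q_{C'}:(X_i,A_i)\to(\Sp_n,s)$ once $C'$ is a regular cell contained in $\int{X_i}\setminus A_i$; no retraction is needed. To complete your argument you should route it through that equivalence (or reprove it), rather than through a retraction of the punctured cone.
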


\begin{proof}
Assume that~$\C{X,A}$ does not have the surjection property. There exist~$n\in\N$ and a regular~$n$-cell~$C\subseteq X\setminus A$ such that the corresponding quotient map~$q_C:(X,A)\to(\Sp_n,s)$ is null-homotopic.

For each~$i$, the topological boundary~$\partial X_i$ of~$X_i$ is nowhere dense in~$X$, so~$\bigcup_{i\in\N}\partial X_i$ is meager. By the Baire category theorem, its complement is dense, in particular it intersects~$\op{C}$. Let~$x$ belong to the intersection and let~$i$ be such that~$x\in X_i$. As~$x\notin\partial X_i$,~$x$ belongs to the interior of~$X_i$ so~$x$ is~$n$-Euclidean in~$X_i$. Note that~$x\notin A_i$, because~$C$ is disjoint from~$A$.

Let~$C'\subseteq C\cap X_i\setminus A_i$ be a regular~$n$-cell. The quotient map~$q_{C'}:(X,A)\to (\Sp_n,s)$ is homotopic to~$q_{C}$ so it is null-homotopic. Therefore, its restriction to~$(X_i,A_i)$ is null-homotopic, implying that~$\C{X_i,A_i}$ does not have the surjection property.
\end{proof}

Under certain conditions, the~$\epsilon$-surjection property is preserved by taking finite unions. This result was stated as Theorem 4.1 in \cite{AH22} for simplicial complexes.

\begin{theorem}\label{thm_finite_union}
Let~$(X,A)$ and~$(X_i,A_i)_{i\leq n}$ be compact pairs such that~$X=\bigcup_{i\leq n}X_i$ and~$A=\bigcup_{i\leq n}A_i$. Assume that each topological boundary~$\partial X_i$ is a neighborhood retract in~$X$.

If every pair~$(X_i,A_i)$ has the~$\epsilon$-surjection property for some~$\epsilon>0$, then~$(X,A)$ has the~$\delta$-surjection property for some~$\delta>0$.
\end{theorem}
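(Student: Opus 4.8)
The plan is to argue by contraposition: assuming that $(X,A)$ fails the $\delta$-surjection property for every $\delta>0$, I would produce a single index $j$ for which $(X_j,A_j)$ fails the $\epsilon$-surjection property for every $\epsilon>0$, contradicting the hypothesis. The main device is, for each piece $X_i$, a retraction of a neighbourhood of $X_i$ onto $X_i$ that displaces points by an amount tending to $0$ as one approaches $X_i$. Concretely, since each $\partial X_i$ is a neighbourhood retract, I fix a retraction $r_i:U_i\to\partial X_i$ with $U_i\subseteq X$ open and $\partial X_i\subseteq U_i$, and set $W_i=\int{X_i}\cup U_i$, which is an open neighbourhood of $X_i$ (here I use that $X_i$ is closed, so $X_i=\int{X_i}\cup\partial X_i$ and $\partial X_i\subseteq U_i$). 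I then define $R_i:W_i\to X_i$ by $R_i=\id$ on $X_i$ and $R_i=r_i$ on $U_i\setminus X_i$; continuity at points of $\partial X_i$ follows from $r_i|_{\partial X_i}=\id$.

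The two quantitative facts I would isolate about $R_i$ are: (a) for every $\beta>0$ there is $\delta>0$ with $\Ne(X_i,\delta)\setminus X_i\subseteq\Ne(\partial X_i,\beta)$, proved by a routine compactness argument (otherwise a sequence in $X\setminus X_i$ converging to $X_i$ but staying $\beta$-away from $\partial X_i$ would have a limit in $\int{X_i}\cap\overline{X\setminus X_i}=\emptyset$); and (b), as a consequence, the displacement $\omega_i(\delta):=\sup\{d(R_i(w),w):w\in\Ne(X_i,\delta)\cap W_i\}$ tends to $0$ as $\delta\to0$, combining (a) with the uniform continuity of $r_i$ near the compact set $\partial X_i$ and $r_i|_{\partial X_i}=\id$. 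Next I fix $\delta_k\to0$ together with non-surjective maps $f_k:X\to X$ satisfying $f_k|_A=\id_A$ and $d(f_k,\id_X)<\delta_k$. Since each $X_i$ is closed, each $\partial X_i$ is nowhere dense, so $\bigcup_i\partial X_i$ has dense complement; as $X\setminus\im(f_k)$ is non-empty and open, I may choose $z_k\in X\setminus\im(f_k)$ with $z_k\notin\bigcup_i\partial X_i$. Writing $z_k\in X_{j_k}$, the condition $z_k\notin\partial X_{j_k}$ gives $z_k\in\int{X_{j_k}}$, and after passing to a subsequence I may assume a single index $j$ works, i.e.\ $z_k\in\int{X_j}$ and $z_k\notin\im(f_k)$ for all $k$.

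Finally, for $k$ large enough that $\delta_k$ is below the threshold from (a), one has $f_k(X_j)\subseteq\Ne(X_j,\delta_k)\subseteq W_j$, so $g_k:=R_j\circ f_k|_{X_j}:X_j\to X_j$ is well defined and continuous. It is the identity on $A_j$ (because $A_j\subseteq A\cap X_j$, where $f_k=\id$ and $R_j=\id$), and $d(g_k,\id_{X_j})\le\delta_k+\omega_j(\delta_k)\to0$ by (b). The crucial observation is non-surjectivity: since $z_k\in\int{X_j}$ while $R_j$ maps every point of $U_j\setminus X_j$ into $\partial X_j$, which is disjoint from $\int{X_j}$, one has $R_j^{-1}(z_k)=\{z_k\}$; hence $z_k\in\im(g_k)$ would force $z_k\in f_k(X_j)\subseteq\im(f_k)$, a contradiction. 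Thus each $g_k$ witnesses the failure of the $\epsilon$-surjection property for $(X_j,A_j)$ with $\epsilon=\delta_k+\omega_j(\delta_k)$, and letting $k\to\infty$ contradicts the assumption that every $(X_i,A_i)$ has the $\epsilon$-surjection property for some $\epsilon>0$. The step I expect to be the main obstacle is the quantitative control of the retraction near the boundary, namely facts (a) and (b), which simultaneously guarantee that $f_k$ lands in the domain $W_j$ of $R_j$ and that the composite stays close to the identity; once these are in place, the remark that a missed \emph{interior} point cannot be recreated by a retraction landing on the boundary makes non-surjectivity essentially automatic.
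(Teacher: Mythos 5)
Your proof is correct and takes essentially the same approach as the paper: the same retraction $R_i$ of a neighbourhood of $X_i$ onto $X_i$ (identity on $X_i$, the boundary retraction outside), the same observation that a missed point of $\int{X_i}$ has itself as its only $R_i$-preimage, and the same closeness control of $R_i\circ f|_{X_i}$. You merely organize it as a sequential contrapositive and spell out the density and displacement estimates in more detail than the paper does.
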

\begin{proof}
For each~$i$, there is a neighborhood~$U_i$ of~$X_i$ and a retraction~$r_i:U_i\to X_i$ such that the only preimage of each~$x\in \int{X_i}$ is~$x$.

Indeed, let~$V_i\subseteq X$ be a neighborhood of~$\partial X_i$ and~$\rho_i:V_i\to\partial X_i$ be a retraction. Let~$U_i=X_i\cup V_i=\int{X_i}\cup V_i$ and define~$r_i:U_i\to X_i$ as the identity on~$X_i$ and as~$\rho_i$ on~$V_i\setminus \int{X_i}$. The definition is consistent because the intersection of~$X_i$ and~$V_i\setminus \int{X_i}$ is~$\partial X_i$, on which~$\rho_i$ coincides with the identity. Therefore,~$r_i$ is well-defined and continuous, and is indeed a retraction.

Let~$\epsilon>0$ be such that each~$(X_i,A_i)$ has the~$\epsilon$-surjection property. Let~$\delta>0$ be such that for each~$i\leq n$,~$\Ne(X_i,\delta)\subseteq U_i$ and for~$x,y\in U_i$,~$d(x,y)<\delta$ implies~$d(r_i(x),r_i(y))<\epsilon$.

Assume the existence of a non-surjective continuous function~$f:X\to X$ such that~$f|_A=\id_A$ and~$d(f,\id_X)<\delta$. There exists~$i\leq n$ and~$x_0\in \int{X_i}$ which is not in the image of~$f$. Let~$f_i=r_i\circ f|_{X_i}:X_i\to X_i$. It is well-defined, because~$f(X_i)\subseteq \Ne(X_i,\delta)\subseteq U_i$. Both~$f$ and~$r_i$ are the identity on~$A$, so~$f_i$ is the identity on~$A$. By choice of~$\delta$,~$d(f_i,\id_{X_i})<\epsilon$. Finally,~$f_i$ is not surjective, because~$x_0$ is not in its image: the only preimage of~$x_0\in\int{X_i}$ by~$r_i$ is~$x_0$, which is not in the image of~$f$.
\end{proof}
\section{Finitely conical spaces}\label{sec_conical}
We show that for a class of spaces containing finite simplicial complexes and compact manifolds, the~$\epsilon$-surjection property is actually a local property.

\subsection{Definition and first properties}
\begin{definition}\label{def_locally_conical}
A topological space~$X$ is \textbf{finitely conical} if there exists a finite sequence of compact metrizable spaces~$(L_i)_{i\leq n}$ and a finite covering of~$X$ by open sets~$U_i\subseteq X$,~$i\leq n$, where~$U_i$ is homeomorphic to~$\OC{L_i}$.

A pair~$(X,A)$ is \textbf{finitely conical} if~$X$ is finitely conical and for every~$i$, the homeomorphism~$f_i:U_i\to\OC{L_i}$ satisfies~$f(U_i\cap A)=\OC{N_i}$ for some compact~$N_i\subseteq L_i$ (or~$N_i=\unit$).
\end{definition}
We will say that the sequence~$(L_i)_{i\leq n}$ or~$(L_i,N_i)_{i\leq n}$ witnesses the fact that~$X$ or~$(X,A)$ is finitely conical.

For simplicity of notation, we will identify~$U_i$ with~$\OC{L_i}$, so the condition for pairs can be written as~$\OC{L_i}\cap A=\OC{N_i}$.

\begin{remark}[Particular case]
We allow the pair~$(L_i,N_i)=(\{0\},\unit)$, giving the open cone pair~$\OC{\{0\},\unit}=([0,1),\{0\})$.
\end{remark}

\begin{example}[Graph]
Let~$X$ be a finite topological graph and~$A$ be the set vertices of degree~$1$. The pair~$(X,A)$ is finitely conical. An open cone consists of a vertex~$v$ together with the open edges starting at~$v$. The open cone pair centered at~$v$ is~$\OC{L,N}$, where~$L$ is the set of vertices that are neighbors of~$v$ and~$N=\unit$ if~$v$ has degree~$1$, and~$N=\emptyset$ otherwise.
\end{example}

\begin{example}[Simplicial pair]
More generally, every pair~$(X,A)$ consisting of a finite simplicial complex~$X$ and a subcomplex~$A$ is finitely conical, witnessed by the open stars of the vertices.
\end{example}

\begin{example}[Compact manifold]
Every compact manifold of dimension~$n\in\mathbb{N}^{\ast}$ with possibly empty boundary~$\partial M$ is finitely conical. The pairs~$(L_i,N_i)$ are~$(\Sp_{n-1},\emptyset)$, as well as~$(\BB_{n-1},\Sp_{n-2})$ when~$\partial M\neq\emptyset$. A point~$x\in M\setminus \partial M$ is the tip of~$\OC{\Sp_{n-1}}$, a point~$x\in \partial M$ is the tip of~$\OC{\BB_{n-1}}$. Note that for~$n=1$, one has~$(\BB_{0},\Sp_{-1})=(\{0\},\unit)$ (see Section \ref{sec_cone}).
\end{example}

The class of finitely conical spaces or pairs is preserved by many constructs.
\begin{proposition}\label{prop_loc_con_product}
Finitely conical pairs are closed under finite products and the cone operator.
\end{proposition}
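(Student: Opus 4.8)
The plan is to treat the two closure properties separately, with the product case reducing by an immediate induction to binary products, and in both cases to reduce everything to one structural fact about cones and joins which I would record in Section~\ref{sec_cone_join}: the homeomorphism of pairs
\[
\OC{L}\times\OC{M}\;\cong\;\OC{L\star M}
\]
(for compact metrizable $L,M$, where $\star$ denotes the join) which carries the product subpair $(\OC{L}\times\OC{P})\cup(\OC{N}\times\OC{M})$ to the subcone $\OC{(L\star P)\cup(N\star M)}$, for compact $N\subseteq L$, $P\subseteq M$. Since joins and finite unions of compact metrizable spaces are again compact metrizable, the links produced below are of the required form. The degenerate inputs $N=\unit$ or $M=\mathrm{pt}$ are absorbed by the conventions $L\star\unit=L$ and $L\star\{\mathrm{pt}\}=\C{L}$ recorded alongside that homeomorphism.

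For the product $(X,A)\times(Y,B)=(X\times Y,\,X\times B\cup A\times Y)$, I would take witnessing covers: open sets $U_i\cong\OC{L_i}$ with $U_i\cap A=\OC{N_i}$, and $V_j\cong\OC{M_j}$ with $V_j\cap B=\OC{P_j}$. The sets $U_i\times V_j$ form a finite open cover of $X\times Y$, and $U_i\times V_j\cong\OC{L_i}\times\OC{M_j}\cong\OC{L_i\star M_j}$ by the displayed homeomorphism. A direct computation gives $(X\times B\cup A\times Y)\cap(U_i\times V_j)=(\OC{L_i}\times\OC{P_j})\cup(\OC{N_i}\times\OC{M_j})$, which under the same homeomorphism becomes $\OC{(L_i\star P_j)\cup(N_i\star M_j)}$. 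Thus $(L_i\star M_j,\,(L_i\star P_j)\cup(N_i\star M_j))_{i,j}$ witnesses that the product is finitely conical.

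For the cone operator, recall (Section~\ref{sec_cone}) that $\C{X,A}=(\C{X},\,X\cup\C{A})$. I would cover $\C{X}$ by the tip neighborhood $\OC{X}=\C{X}\setminus X$ together with the cylinder charts $U_i\times(0,1]$; the latter are open since $\C{X}\setminus\{\text{tip}\}\cong X\times(0,1]$, and together these sets cover the tip, the interior points and the base. The tip chart is $\OC{X}$ with $\OC{X}\cap(X\cup\C{A})=\OC{A}$, giving the link pair $(X,A)$. Using $(0,1]\cong\OC{\{\mathrm{pt}\}}$ with the closed endpoint as tip, each cylinder chart is $(\OC{L_i},\OC{N_i})\times(\OC{\{\mathrm{pt}\}},\,\{\text{tip}\})$, where $\{\text{tip}\}=\OC{\unit}$; applying the product-pair homeomorphism with $M=\{\mathrm{pt}\}$, $P=\unit$ identifies this chart with $\OC{\C{L_i}}$ and its subpair with $\OC{L_i\cup\C{N_i}}$, yielding the link pair $(\C{L_i},\,L_i\cup\C{N_i})$.

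The genuinely delicate point, and the one I would prove carefully in the appendix rather than here, is the second half of the displayed homeomorphism: that the product subpair of two open cones is sent \emph{exactly} to the subcone over $(L\star P)\cup(N\star M)$, together with the correct behaviour of the $\unit$ and single-point conventions. Once that compatibility is in place, both closure statements follow by the routine chart-chasing above, and finite products follow from the binary case by induction.
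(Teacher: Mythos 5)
Your proposal is correct and follows essentially the same route as the paper: both arguments rest on the homeomorphism $\OC{L}\times\OC{M}\cong\OC{L*M}$ (and its pair version, Proposition~\ref{prop_cone_pair_product}) to handle products, and both decompose $\C{X}$ into the tip chart $\OC{X}$ and the cylinder $X\times(0,1]\cong X\times\OC{\{0\}}$ to handle cones, producing the same witnessing link pairs $(L_i,N_i)*(M_j,P_j)$ and $\C{L_i,N_i}$. The only cosmetic difference is that the paper derives the cylinder charts by invoking the already-proved product statement applied to $(X,A)\times\OC{\{0\},\unit}$, whereas you carry out that chart identification directly.
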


\begin{proof}
Let~$(X_1,A_1)$ and~$(X_2,A_2)$ be finitely conical, and let~$(L_{i}^1,N_{i}^1)_{i\leq m}$ and~$(L_j^2,N_j^2)_{j\leq n}$ be respective witnesses. It is not difficult to see that the~$(X,A)=(X_1,A_1)\times (X_2,A_2)$ is finitely conical, witnessed by the pairs
\begin{equation*}
(L_{i,j},N_{i,j})=(L^1_i,N^1_i)*(L^2_j,N^2_j),
\end{equation*}
where the join~$*$ of pairs is defined in Section \ref{sec_join}.

Indeed, one has
\begin{align*}
X =X_{1}\times X_{2}=\bigcup_{i}\OC{L_{i}^{1}}\times \bigcup_{j}\OC{L_{j}^{2}}& =\bigcup_{i,j}\OC{L_{i}^{1}}\times\OC{L_{j}^{2}} =\bigcup_{i,j}\OC{L_{i,j}},
\end{align*}
by Proposition~\ref{prop_cone_product}. Note that each~$\OC{L_{i,j}}$ is open in~$X$. Moreover,
\begin{align*}
\OC{L_{i,j}}\cap A & =(\OC{L_{i}^{1}}\times\OC{L_{j}^{2}})\cap((A_{1}\times X_{2})\cup(X_{1}\times A_{2}))\\
 & =((\OC{L_{i}^{1}}\cap A_{1})\times\OC{L_{j}^{2}})\cup(\OC{L_{i}^{1}}\times(\OC{L_{j}^{2}}\cap A_{2}))\\
 & =(\OC{N_{i}^{1}}\times\OC{L_{j}^{2}})\cup(\OC{L_{i}^{1}}\times\OC{N_{j}^{2}})\\
 &=\OC{N_i^1*L^2_j\cup L^1_i*N^2_j}\\
 & =\OC{N_{i,j}},
\end{align*}
where the last equality holds by Proposition \ref{prop_cone_pair_product}. We have proved that~$(X,A)$ is finitely conical.

We now show that if~$(X,A)$ is finitely conical, coming with~$(L_{i},N_{i})_{i\leq n}$, then~$\C{X,A}$ is finitely conical, witnessed by the pairs~$\C{L_i,N_i}$ together which the pair~$(X,A)$.

The first component of the pair~$\C{X,A}$ is the cone~$\C{X}$, which is covered by two open sets~$\OC{X}$ and~$X\times (0,1]$. Its second component is~$X\cup \C{A}$, which is covered by the two open sets~$\OC{A}$ and~$X\cup (A\times (0,1])$. All in all, we have~$\C{X,A}=\OC{X,A}\cup ((X,A)\times ((0,1],\{1\}))$.

The first pair~$\OC{X,A}$ is finitely conical, because it consists of one open cone pair.

The second pair~$(X,A)\times ((0,1],\{1\})=(X,A)\times \OC{\{0\},\unit}$ is a product of two finitely conical pairs, so it is finitely conical by the first statement. It is witnessed by the pairs~$(L_i,N_i)*(\{0\},\unit)=\C{L_i,N_i}$. Therefore,~$\C{X,A}$ is the union of two open subspaces which are both finitely conical, so it is finitely conical.
\end{proof}


\begin{remark}
By similar arguments, it can be proved that if the pairs~$(X_1,A_1)$ and~$(X_2,A_2)$ are finitely conical, witnessed by~$(L^1_i,N^1_i)_{i\leq m}$ and~$(L^2_j,N^2_j)_{j\leq n}$ respectively, then their join~$(X,A)*(Y,B)$ is finitely conical, witnessed by the pairs
\begin{equation*}
\C{(L^1_i,N^1_i)*(L^2_j,N^2_j)},
\end{equation*}
which means that~$(X,A)*(Y,B)$ is covered by the open cones of these pairs. We will not use this result.
%
\end{remark}
\subsection{The \texorpdfstring{$\epsilon$}{epsilon}-surjection property for finitely conical pairs}

We show that the~$\epsilon$-surjection property of a finitely conical pair reduces to the surjection property of each local cone pair, assuming that the~$L_i$'s are ANRs. This result is particularly useful because it enables one to check a global property by inspecting the local cones independently of each other. The result was stated in \cite{AH22} for the family of finite simplicial complexes.

\begin{theorem}\label{thm_local_surjection}
Let~$(X,A)$ be a finitely conical compact pair coming with~$(L_{i},N_{i})_{i\leq n}$, where each~$L_i$ is an ANR. The following statements are equivalent:
\begin{enumerate}
\item $(X,A)$ has the~$\epsilon$-surjection property for some~$\epsilon>0$, 
\item All the cone pairs~$\C{L_{i},N_{i}}$ have the surjection property.
\end{enumerate}
\end{theorem}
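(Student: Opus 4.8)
The plan is to prove the equivalence by establishing both directions, relying on the previous results about unions and the local structure of finitely conical pairs.

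For the implication $2 \Rightarrow 1$, I would argue that $(X,A)$ is covered by the finitely many open cone pairs $\OC{L_i, N_i}$, and that the $\epsilon$-surjection property can be assembled from the surjection property of the local cones. The key observation is that each open cone $\OC{L_i}$ is, up to homeomorphism, the open part of the cone $\C{L_i}$, and the hypothesis that $\C{L_i, N_i}$ has the surjection property should translate into a local $\epsilon$-surjection statement for a neighborhood of the tip. The natural tool here is Theorem~\ref{thm_finite_union}: if I can exhibit a finite cover of $(X,A)$ by subpairs, each having the $\epsilon$-surjection property for some $\epsilon > 0$, and each topological boundary is a neighborhood retract (which should follow from the ANR hypothesis on the $L_i$), then $(X,A)$ itself inherits the $\delta$-surjection property. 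So the work is to show that a suitable closed cone-neighborhood of each tip has the $\epsilon$-surjection property, derived from the surjection property of $\C{L_i, N_i}$ together with Proposition~\ref{prop_cone_epsilon} (which equates $\epsilon$-surjection and surjection for cone pairs).

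For the harder direction $1 \Rightarrow 2$, I would argue contrapositively: suppose some cone pair $\C{L_i, N_i}$ fails the surjection property, and produce, for every $\epsilon > 0$, a non-surjective $f : X \to X$ fixing $A$ with $d(f, \id_X) < \epsilon$. The idea is to take the witnessing non-surjective map on $\C{L_i}$ that is the identity on $L_i \cup \C{N_i}$, transport it through the homeomorphism $U_i \cong \OC{L_i}$ to act on a small closed cone-neighborhood of the tip inside $X$, and extend it by the identity outside. Since the cone contains arbitrarily small copies of itself (the mechanism exploited in Proposition~\ref{prop_cone_epsilon}), I can shrink the perturbation to be $\epsilon$-small while keeping it non-surjective and equal to the identity on the complement, in particular on $A$. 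The ANR hypothesis on the $L_i$ is what guarantees the gluing is continuous and that the extension by the identity across the boundary behaves well; this is likely also where one invokes a homotopy extension or retraction argument analogous to the one in Theorem~\ref{thm_finite_union}.

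The main obstacle I anticipate is the careful bookkeeping of the gluing along the boundary of the open cone $\OC{L_i}$ inside $X$. The non-surjective map on the abstract cone $\C{L_i}$ is the identity on the base $L_i$, but when $U_i \cong \OC{L_i}$ is embedded as an open set in $X$, its frontier in $X$ need not correspond cleanly to $L_i$, so extending by the identity requires the perturbation to vanish near the boundary of the cone-neighborhood. Controlling this simultaneously with the non-surjectivity and the metric estimate is the delicate point, and it is precisely the rescaling trick from Proposition~\ref{prop_cone_epsilon} that makes it work: by confining the action to an inner cone of radius $\delta$ and taking $\delta$ small, the map is the identity on a collar and hence glues to the global identity. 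I expect the ANR assumption on each $L_i$ to be essential for invoking the relevant homotopy-extension facts cleanly, and I would make sure to treat the degenerate case $N_i = \unit$ (where the cone pair is $(\C{L_i}, L_i \cup \{0\})$) in parallel, as flagged in Proposition~\ref{prop_cone_epsilon}.
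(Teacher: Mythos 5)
Your first direction (some cone pair fails $\Rightarrow$ $(X,A)$ fails for every $\epsilon$) matches the paper: shrink the witnessing non-surjective map via the rescaling of Proposition~\ref{prop_cone_epsilon}, place it on a small closed cone inside the open cone whose topological boundary in $X$ is the base $L_i$, and extend by the identity. One small correction: continuity of that gluing needs nothing more than $f$ being the identity on $L_i$; the ANR hypothesis plays no role there. It is the \emph{other} direction where the ANRs are needed, and that is where your plan breaks down.

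Your strategy for $2\Rightarrow 1$ is to cover $(X,A)$ by closed cone subpairs and invoke Theorem~\ref{thm_finite_union}. The paper explicitly warns that this does not work, and the obstruction is exactly the bookkeeping of the relative parts. Theorem~\ref{thm_finite_union} requires $A=\bigcup_i A_i$, so the only admissible subpair structure on a closed cone $K_i\cong\C{L_i}$ is $(K_i,K_i\cap A)=(K_i,\C{N_i})$ — but this pair need \emph{not} have the $\epsilon$-surjection property even when $\C{L_i,N_i}$ does. Concretely, take $X=\Sp_1$, $A=\emptyset$: each local cone pair is $\C{\Sp_0,\emptyset}=(\BB_1,\Sp_0)$, which has the surjection property, yet the subpair $([0,1],\emptyset)$ fails the $\epsilon$-surjection property for every $\epsilon$ (push everything toward one endpoint). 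The pair that does inherit the property from the hypothesis is $(K_i,L_i\cup\C{N_i})$, but then $\bigcup_i(L_i\cup\C{N_i})\supsetneq A$ in general, so a global map fixing $A$ gives no control on $\bigcup_i L_i$ and the union theorem's hypotheses are violated. The paper's actual argument is a bespoke replacement: fix nested cones $K_i(t_0)\subseteq K_i(t_1)\subseteq K_i(t_2)$ covering $X$ at level $t_0$, take the global near-identity non-surjective $h$, define $g$ on $K_i(t_1)\cup M_i(t_2)$ as $h$ on the inner cone and as the identity on $M_i(t_2)$ (consistent because the overlap lies in $A$), and then extend $g$ to all of $K_i(t_2)$ \emph{with metric control} using Lemma~\ref{lem_ANR_extension} — this controlled extension is precisely where the hypothesis that each $L_i$ (hence each $K_i(t_2)$) is an ANR is used, and the metric control is what keeps the missed set $K_i(t_0)$ outside the image of the extension. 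Without this step your argument has no way to produce a self-map of the cone pair that is the identity on the full base $L_i\cup\C{N_i}$.
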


The implication~$2.\Rightarrow 1.$ does not follow from Theorem \ref{thm_finite_union}, because in  a finitely conical pair~$(X,A)$, one has~$A=\bigcup_i\C{N_i}$ while  applying Theorem \ref{thm_finite_union} would require~$A=\bigcup_iL_i\cup\C{N_i}$.

\begin{proof}
$1.\Rightarrow 2.$ Assume that some~$\C{L_i,N_i}$ does not have the surjection property and let~$\epsilon>0$.~$\C{L_i,N_i}$ does not have the~$\epsilon$-surjection property by Proposition \ref{prop_cone_epsilon}, which is witnessed by a non-surjective continuous function~$f:\C{L_i}\to\C{L_i}$. We assume that~$\C{L_i}$ is embedded in~$X$ in such a way that its topological boundary in~$X$ is~$L_i$. We then extend~$f$ as the identity outside~$\C{L_i}$, showing that~$(X,A)$ does not have the~$\epsilon$-surjection property. Note that the extension of~$f$ is indeed continuous because~$f$ is the identity on the boundary of~$\C{L_i}$.

$2.\Rightarrow1.$ We assume that for every~$\epsilon>0$,~$(X,A)$ does not have the~$\epsilon$-surjection property, and we prove that some cone pair~$\C{L_i,N_i}$ does not have the surjection property.

As~$X$ is compact and is covered by finitely many open cones~$\OC{L_i}$,~$X$ is covered by slightly smaller closed cones~$\C{L_i}$ contained in these open cones. Let~$(K_i,M_i)=\C{L_{i},N_{i}}$ be these closed cones, contained in~$X$. $L_{i}$ can be seen as a subset of the Hilbert cube~$Q$, and there is a homeomorphism
\[
g_i:\{(t,t x):t\in[0,1],x\in L_{i}\}\to K_i.
\]
sending~$\{(1,x):x\in L_i\}\cup\{(t,tx):t\in [0,1],x\in N_i\}$ to~$M_i$ (if~$N_i=\unit$, then the latter set is~$\{(1,x):x\in L_i\}\cup\{(0,0)\}$).

For any~$t\in(0,1)$, define the smaller copy~$(K_i(t),M_i(t))$ of~$(K_i,M_i)$ as follows:
\begin{align*}
K_i(t) & =g_i(\{(s,sx):s\in[0,t],x\in L_{i}\}),\\
M_i(t) & =(K_i(t)\cap A)\cup g_i(\{(t,t x):x\in L_{i}\}).
\end{align*}

Let~$t_0<1$ be such that the sets~$K_i(t_0)$ cover~$X$, which exists  as~$X$ is compact. Pick two other real numbers~$t_0<t_1<t_2<1$. Let~$\epsilon>0$ be such that for all~$i$,
\begin{align}
\Ne(K_i(t_{0}),\epsilon) & \subseteq K_i(t_{1}),\label{eq1K}\\
\Ne(K_i(t_{1}),\epsilon) & \subseteq K_i(t_{2}).\label{eq2K}
\end{align}

Let~$\delta<\epsilon$ be smaller than the values provided by Lemma
\ref{lem_ANR_extension} applied to all the compact ANRs~$K_i(t_{2})$
and~$\epsilon$. By assumption,~$(X,A)$ does not have the~$\delta$-surjection
property, i.e.~there exists a non-surjective continuous function~$h:X\to X$
such that~$h|_{A}=\id_{A}$ and~$d_{X}(h,\id_{X})<\delta<\epsilon$.
As~$X=\bigcup_{i}K_i({t_{0}})$ and~$h$ is not surjective, there exists~$i$ such that
\begin{equation}
K_i({t_{0}})\nsubseteq h(X).\label{eq3K}
\end{equation}
Let~$(K,M)=(K_i({t_{2}}),M_i({t_{2}}))\cong \C{L_i,N_i}$. We define a non-surjective continuous function~$G:K\to K$ such that~$G|_{M}=\id_{M}$, showing that~$\C{L_i,N_i}$ does not have the surjection property.

First observe that~$h(K_i({t_{1}}))$ is contained in~$K=K_i(t_2)$, because~$h$
is~$\epsilon$-close to the identity, so~$h(K_i({t_{1}}))\subseteq\Ne(K_i({t_{1}}),\epsilon)\subseteq K$ by \eqref{eq2K}.

We define~$g:K_i({t_{1}})\cup M\to K$ by 
\begin{equation*}
g|_{K_i({t_{1}})}=h|_{K_i({t_{1}})}\text{ and }g|_{M}=\id_{M}.
\end{equation*}
The function~$g$ is well-defined and continuous because~$h$ coincides with the identity on the intersection~$K_i({t_{1}})\cap M\subseteq A$.

We now define a continuous extension~$G:K\to K$ of~$g$. Note that~$g$
is~$\delta$-close to the inclusion~$f:K_i({t_{1}})\cup M\to K$
and~$f$ has a continuous extension~$F:=\id_{K}:K\to K$, so using
Lemma \ref{lem_ANR_extension},~$g$ has a continuous extension~$G:K\to K$
satisfying~$d_{K}(G,\id_{K})<\epsilon$. As~$G$ extends~$g$,~$G|_{M}=\id_{M}$.
It remains to show that~$G$ is not surjective. Indeed,~$K_i({t_{0}})$ is not
contained in~$G(K)$: 
\begin{itemize}
\item $G(K_i({t_{1}}))=h(K_i({t_{1}}))$ which does not contain~$K_i({t_{0}})$
by \eqref{eq3K}, 
\item $G(K\setminus K_i({t_{1}}))\subseteq\Ne(K\setminus K_i({t_{1}}),\epsilon)$
which is disjoint from~$K_i({t_{0}})$ by \eqref{eq1K}.\qedhere
\end{itemize}
\end{proof}

\section{The surjection property for cone pairs}\label{sec_cones}
Theorem \ref{thm_local_surjection} reduces the~$\epsilon$-surjection property to the surjection property for cone pairs. The purpose of this section is to develop techniques to establish whether a cone pair has the surjection property. We will only work with cones of almost Euclidean spaces.

\subsection{Quotients to spheres}
The first characterization of the surjection property for cone pairs involves quotient maps to spheres associated to regular cells.

Let~$(X,A)$ be a compact pair and~$C\subseteq X\setminus A$ a regular~$n$-cell. The quotient of~$X$ by~$X\setminus \op{C}$ is homeomorphic to~$\Sp_n$. We let
\begin{equation}
q_C:(X,A)\to (\Sp_n,s)
\end{equation}
be the quotient map, where~$s\in\Sp_n$ is the image of~$X\setminus \op{C}$ by~$q_C$.
\begin{theorem}[Surjection property vs quotient maps]\label{thm_surj_prop_cone}
Let~$(X,A)$ be an almost Euclidean compact pair. The following statements are equivalent:
\begin{enumerate}
\item $\C{X,A}$ has the surjection property,
\item For every~$n\geq 1$ and every regular~$n$-cell~$C\subseteq X\setminus A$, the quotient map~$q_C:(X,A)\to (\Sp_n,s)$ is not null-homotopic.
\end{enumerate}
\end{theorem}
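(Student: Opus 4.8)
The plan is to prove the contrapositive in both directions, i.e.\ to show that $\C{X,A}$ fails the surjection property if and only if there is a regular $n$-cell $C\subseteq X\setminus A$ for which $q_C$ is null-homotopic. Throughout I embed $X$ in $Q$ and write $\C X=\{(t,tx):t\in[0,1],x\in X\}$, with tip $(0,0)$, base inclusion $\iota:X\to\C X$, $\iota(x)=(1,x)$, and subcone $\C A=\{(t,ta):t\in[0,1],a\in A\}$. First I reduce the failure of the property to a missed point lying over a cell. If $f:\C X\to\C X$ with $f|_{X\cup\C A}=\id$ is non-surjective, then $\C X\setminus\im(f)$ is a nonempty open set disjoint from $X\cup\C A$, so every missed point has the form $(t,tx)$ with $t\in(0,1)$ and $x\in X\setminus A$ (the base and the subcone, including the tip, are fixed and hence in $\im(f)$). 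Projecting to the base coordinate gives a nonempty open subset of $X\setminus A$, which by the almost-Euclidean hypothesis and the Baire category theorem meets the Euclidean points; I may thus fix a missed point $p=(\tau,\tau c)$ with $\tau\in(0,1)$ and $c\in\op{C}$ for some regular $n$-cell $C\subseteq X\setminus A$.

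The crux of the forward implication is a single auxiliary map. Let $s'=q_C(c)\in\Sp_n\setminus\{s\}$, and let $\C{q_C}:\C X\to\C{\Sp_n}\cong\BB_{n+1}$, $(t,tx)\mapsto(t,t\,q_C(x))$, be the cone of $q_C$ (Section~\ref{sec_cone_join}), so that $p':=\C{q_C}(p)=(\tau,\tau s')$ is an interior point of $\BB_{n+1}$. Since $q_C$ restricts to a homeomorphism of $\op{C}$ onto $\Sp_n\setminus\{s\}$ and collapses $X\setminus\op{C}$ to $s$, we get $q_C^{-1}(s')=\{c\}$ and hence $\C{q_C}^{-1}(p')=\{p\}$; therefore $\C{q_C}$ maps $\C X\setminus\{p\}$ into $\BB_{n+1}\setminus\{p'\}$. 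Composing with the radial retraction $\sigma:\BB_{n+1}\setminus\{p'\}\to\Sp_n$ from $p'$, which fixes $\partial\BB_{n+1}=\Sp_n$, yields a map $\alpha:=\sigma\circ\C{q_C}:\C X\setminus\{p\}\to\Sp_n$ with the key property $\alpha\circ\iota=q_C$. If $f$ misses $p$, then $(x,r)\mapsto f\bigl((1-r,(1-r)x)\bigr)$ is a homotopy inside $\C X\setminus\{p\}$ from $\iota$ to a constant, and applying $\alpha$ produces a null-homotopy of $q_C$. Using that $f$ fixes $\C A$, together with the appendix on homotopies (Section~\ref{sec_hom}), this is upgraded to a null-homotopy of the pair map $q_C:(X,A)\to(\Sp_n,s)$, proving that the failure of the surjection property forces some $q_C$ to be null-homotopic.

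For the converse, assume $q_C$ is null-homotopic. Producing a self-map $f$ of $\C X$ that fixes $X\cup\C A$ and avoids $p$ is the same as extending the inclusion $u:X\cup\C A\hookrightarrow\C X\setminus\{p\}$ over $\C X$, and $(\C X,X\cup\C A)$ has the homotopy extension property. The plan is to transport this extension problem to $\BB_{n+1}$ through $\C{q_C}$, which restricts to a homeomorphism of the open neighborhood $\{(t,tx):t\in(0,1),x\in\op{C}\}$ of $p$ onto a neighborhood of $p'$. Null-homotopy of $q_C$ lets me extend $q_C$ over the cone to a map $\hat q:\C X\to\Sp_n=\partial\BB_{n+1}$ with $\hat q|_X=q_C$; the straight-line homotopy in the convex body $\BB_{n+1}$ joins $\C{q_C}$ to $\hat q$ relative to $X$, keeps the image of $\C A$ on the radius over $s$ (which avoids $p'$), and ends at the map $\hat q$ whose image lies in $\partial\BB_{n+1}$ and so avoids the interior point $p'$. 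Pulling the relevant part of this deformation back through the local homeomorphism near $p$ and extending it by the identity away from $p$ via the homotopy extension property yields the desired non-surjective $f$; the obstruction to completing the extension is the primary obstruction over the cone of the cell, which equals the class of $q_C$ and is identified and shown to vanish through Hopf's extension theorem (Section~\ref{sec_hopf}).

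The main obstacle is exactly this backward transport: converting the null-homotopy of $q_C$ into an honest self-map of $\C X$ that is the identity on $X\cup\C A$ and misses $p$. The difficulty is that, unlike the forward map $\alpha$, the construction cannot keep $f$ equal to the identity outside the cone over the cell---the example $X=\BB_2$ (whose cone retracts onto its base) already shows this---so one must genuinely use the homotopy extension property and verify that the \emph{only} obstruction to extending $u$ over $\C X$ is the $(n+1)$-dimensional one detected by $\alpha$, namely $[q_C]$. Showing that $\alpha$ induces an isomorphism on the relevant obstruction group, because near $p$ it is, up to homotopy, the projection of a Euclidean neighborhood onto its linking sphere $\Sp_n$, is what makes the single Hopf obstruction both necessary and sufficient. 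A secondary technical point, handled using Section~\ref{sec_hom}, is the reconciliation of free null-homotopy with null-homotopy of the pair map sending $A$ to $s$.
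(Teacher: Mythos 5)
Your forward direction is essentially correct and runs parallel to the paper's: locating a missed point $p$ over an $n$-Euclidean point, retracting $\C{X}\setminus\{p\}$ onto $\Sp_n$ via $\alpha=\sigma\circ\C{q_C}$, contracting along the cone to get a free null-homotopy of $q_C$, and upgrading to a null-homotopy of pairs with Proposition~\ref{prop_quotient_pair}. This is the content of Lemma~\ref{lem_surj_prop_cone} ($1\Rightarrow2$) together with the easy half of Proposition~\ref{prop_quotient_map}, packaged slightly differently.

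The converse, however, contains a genuine gap, precisely at the step you flag as the main obstacle. To build a non-surjective self-map you must extend the inclusion $u:X\cup\C{A}\hookrightarrow\C{X}\setminus\{p\}$ over $\C{X}$, and your proposed mechanism---a straight-line homotopy in $\BB_{n+1}$ from $\C{q_C}$ to $\hat q$, ``pulled back through the local homeomorphism near $p$''---cannot produce this extension. The map $\C{q_C}$ is injective only on the open cone over $\op{C}$ and collapses everything else; the straight-line deformation moves points originating far from $p$ into and out of the neighborhood of $p'$, and a homotopy of maps into $\BB_{n+1}$ cannot be lifted back through $\C{q_C}$, which is not a fibration. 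The fallback appeal to ``the primary obstruction'' and Hopf's extension theorem is also unavailable here: the target of the extension problem is $\C{X}\setminus\{p\}$, which deformation retracts onto $X\cup\C{D}$ with $D=X\setminus\op{C}$ and is not a sphere a priori; the theorem imposes no CW structure or dimension bound on $X$; and Theorem~\ref{thm_hopf_extension} requires $\dim(X)\le n+1$. The two ingredients you are missing are exactly what the paper's Proposition~\ref{prop_quotient_map} supplies: (i) the quotient $X\cup\C{D}\to X/D\cong\Sp_n$ is a homotopy equivalence because $(X,X\setminus\op{C})$ has the homotopy extension property, which is what converts the null-homotopy of $q_C$ into a null-homotopy of the inclusion $X\to X\cup\C{D}$, i.e.\ a map $g:\C{X}\to X\cup\C{D}$ that is the identity on $X$; and (ii) an explicit reparametrization of the cone (the map $h$ in that proof) that corrects $g$ on $\C{A}$, since $g$ is in general only the identity on $X$, and a naive use of the homotopy extension property of $(\C{X},X\cup\C{A})$ to fix this runs into a compatibility problem on $A=X\cap\C{A}$. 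Without these two steps the converse implication is not proved.
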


Actually, the implication~$1.\Rightarrow 2.$ holds without assuming that~$(X,A)$ is almost Euclidean. In order to prove the theorem, we first give a reformulation of the surjection property for cone pairs.
\begin{lemma}\label{lem_surj_prop_cone}
Let~$(X,A)$ be an almost Euclidean compact pair. The following statements are equivalent:
\begin{enumerate}
\item $\C{X,A}$ does not have the surjection property,
\item There exists a regular cell~$C\subseteq X\setminus A$ and a continuous function~$f:\C X\to X\cup \C D$, where~$D=X\setminus \op{C}$, which is the identity on~$X\cup\C{A}$.
\end{enumerate}
\end{lemma}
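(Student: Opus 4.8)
The plan is to treat the two implications separately, with essentially all the work concentrated in $1.\Rightarrow 2.$; the reverse implication will not use the almost Euclidean hypothesis. Throughout I write $\C{X}=\{(t,tx):t\in[0,1],x\in X\}$ with tip $(0,0)$ and base $X$ identified with $\{(1,x):x\in X\}$.

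For $2.\Rightarrow 1.$, I would simply observe that $X\cup\C{D}$ is a \emph{proper} subset of $\C{X}$: its complement is the open radial tube $\{(t,tx):t\in(0,1),\,x\in\op{C}\}$, which is nonempty since $\op{C}\neq\emptyset$. Hence a continuous $f:\C{X}\to X\cup\C{D}$ that is the identity on $X\cup\C{A}$, viewed as a self-map of $\C{X}$, is non-surjective and the identity on $X\cup\C{A}$, so it witnesses that $\C{X,A}$ fails the surjection property.

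For $1.\Rightarrow 2.$, suppose $h:\C{X}\to\C{X}$ is continuous, non-surjective, and the identity on $X\cup\C{A}$. Then $W=\C{X}\setminus\im(h)$ is open and nonempty, and since $\im(h)\supseteq X\cup\C{A}$ it is contained in $V=\{(t,tx):t\in(0,1),\,x\in X\setminus A\}\cong(X\setminus A)\times(0,1)$. The projection $V\to X\setminus A$ is an open map, so the image $O$ of $W$ is a nonempty open subset of $X\setminus A$; because $(X,A)$ is almost Euclidean, $O$ contains a Euclidean point $x_0$, and I can fix $t_0\in(0,1)$ with $p:=(t_0,t_0x_0)\notin\im(h)$. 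Using that $x_0$ is $n$-Euclidean, I would choose a regular $n$-cell $C\subseteq X\setminus A$ with $x_0\in\op{C}$, by shrinking a Euclidean chart so as to avoid the closed set $A$. Then $A\subseteq D:=X\setminus\op{C}$, so $\C{A}\subseteq\C{D}$, and $p$ lies in the open radial tube over $\op{C}$, i.e.\ in the interior of the solid sub-cone $\C{C}$.

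The crux is then to build a retraction $r:\C{X}\setminus\{p\}\to X\cup\C{D}$ fixing $X\cup\C{A}$: once this is done, $g:=r\circ h:\C{X}\to X\cup\C{D}$ is continuous (as $\im(h)\subseteq\C{X}\setminus\{p\}$), lands in $X\cup\C{D}$, and is the identity on $X\cup\C{A}$ (both $h$ and $r$ are), which is exactly the function required in $2.$ To construct $r$, I would use that $C\cong\BB_n$ gives $\C{C}\cong\C{\BB_n}\cong\BB_{n+1}$ (Section~\ref{sec_cone_join}), under which the topological boundary $\partial\C{C}\cong\Sp_n$ is precisely $C\cup\C{\bd{C}}$ and $p$ corresponds to an interior point. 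Radial retraction from $p$ (as in Example~\ref{example ball sphere}) then yields $r_0:\C{C}\setminus\{p\}\to C\cup\C{\bd{C}}$ fixing $\partial\C{C}$, and I define $r$ to be $r_0$ on $\C{C}\setminus\{p\}$ and the identity on $\C{D}$. This pastes continuously because the two closed pieces $\C{C}$ and $\C{D}$ meet exactly in $\C{\bd{C}}\subseteq\partial\C{C}$, where $r_0$ is already the identity, and $X\cup\C{A}$ is contained in $(C\cup\C{\bd{C}})\cup\C{D}$ and fixed throughout.

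The main obstacle I anticipate is the geometric bookkeeping that legitimizes the radial retraction: checking that $p$ is a genuine interior point of $\C{C}$, that $\partial\C{C}=C\cup\C{\bd{C}}$ under the identification $\C{C}\cong\BB_{n+1}$, and that the pasting along $\C{\bd{C}}$ is consistent. One must also ensure that $x_0$ and the cell $C$ can be chosen inside $X\setminus A$; this is precisely where the almost Euclidean hypothesis, together with the open-projection argument, is needed, and it is the only place it enters the proof.
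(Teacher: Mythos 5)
Your proof is correct and follows essentially the same route as the paper's: locate a regular cell~$C\subseteq X\setminus A$ whose cone contains a point~$p$ outside the image of the given non-surjective map, radially retract~$\C{C}\setminus\{p\}$ onto its border~$C\cup\C{\bd{C}}$, extend by the identity to a retraction~$\C{X}\setminus\{p\}\to X\cup\C{D}$, and compose. (The paper locates~$C$ by observing that the cones of regular cells are dense in~$\C{X\setminus A}$ rather than by projecting the complement of the image; also, when~$A=\emptyset$ your set~$W$ may contain the tip of the cone, so one should replace~$W$ by~$W\cap V$ there --- a harmless adjustment.)
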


\begin{proof}
$1.\Rightarrow 2.$ Let~$f:\C{X}\to\C{X}$ be a non-surjective continuous function which is the identity on~$X\cup\C{A}$. The union of regular cells~$C\subseteq X\setminus A$ is dense in~$X\setminus A$, so the union of their cones is dense in~$\C{X\setminus A}$. Therefore, there exists a regular cell~$C\subseteq X\setminus A$ such that~$\C{C}$ is not contained in the image of~$f$. $\C{C}$ is a regular cell in~$\C{X}$, its intersection with~$X\cup \C{A}$ is contained in its border. Let~$x$ belong to the interior of~$\C{C}$ but not to the image of~$f$. There exists a retraction~$r_0:\C{C}\setminus \{x\}\to\bd{\C{C}}$, which extends to a retraction~$r:\C{X}\setminus \{x\}\to X\cup\C{D}$. The composition~$r\circ f:\C{X}\to X\cup \C{D}$ is well-defined, continuous and is the identity on~$X\cup \C{A}$.

$2.\Rightarrow 1.$ Such a function~$f$ is a non-surjective continuous satisfying~$f|_{\C A\cup X}=\id{}_{\C A\cup X}$.
\end{proof}

The existence of a continuous function~$f:\C{X}\to X\cup\C{D}$ which is the identity on~$X$ is nothing else than a null-homotopy of the inclusion map~$i:X\to X\cup\C{D}$. If the pair~$(X,D)$ has the homotopy extension property, then~$X\cup\C{D}$ is homotopy equivalent to~$X/D$ (Proposition 0.17 in \cite{Hatcher02}), which implies that the inclusion map~$i$ is null-homotopic if and only if the quotient map~$q:X\to X/D$ is null-homotopic.

We prove a similar equivalence applicable to pairs~$(X,A)$. Essentially, we need to be careful about how the homotopy equivalence between~$X\cup\C{D}$ and~$X/D$ is defined on~$A$.

\begin{proposition}\label{prop_quotient_map}
Let~$(X,D)$ be a compact pair satisfying the homotopy extension property and~$A$ a compact subset of~$D$. The following statements are equivalent
\begin{enumerate}
\item There exists a continuous function~$f:\C X\to X\cup\C D$ which is
the identity on~$X\cup\C A$,
\item The quotient map~$q:(X,A)\to(X/D,p)$ is null-homotopic ($p$ being the equivalence class of~$D$).
\end{enumerate}
\end{proposition}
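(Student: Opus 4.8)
The plan is to exploit the collapse map $\pi:X\cup\C{D}\to X/D$ that crushes the cone $\C{D}$ to the class $p$. Since $(X,D)$ satisfies the homotopy extension property and $\C{D}$ is contractible, $\pi$ is a homotopy equivalence (Proposition~0.17 in \cite{Hatcher02}). As recalled just before the statement, a continuous map out of the cone $\C{X}$ that restricts to a prescribed map on the base $X$ is exactly a null-homotopy of that map, so both conditions are null-homotopy statements, and the point is to transport them across $\pi$ while keeping track of the subspace $A$. For $1.\Rightarrow 2.$ I would simply set $G=\pi\circ f:\C{X}\to X/D$. On the base one has $G|_X=\pi\circ i=q$, since $\pi$ restricted to $X$ is the quotient map $q$; moreover $f$ is the identity on $\C{A}$, so $f(\C{A})=\C{A}\subseteq\C{D}$ and hence $G(\C{A})=\pi(\C{D})=\{p\}$. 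Thus $G$ is a null-homotopy of $q$ that keeps the whole cone $\C{A}$ (in particular $A$) at the basepoint $p$, i.e.\ a null-homotopy of the pair map $q:(X,A)\to(X/D,p)$. This direction uses only that $\pi$ collapses $\C{D}$ to $p$.

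For $2.\Rightarrow 1.$, I would fix a homotopy inverse $\psi$ of $\pi$ obtained by contracting $\C{D}$ to its tip: using the homotopy extension property of $(X\cup\C{D},\C{D})$ one extends the straight cone contraction of $\C{D}$ to a homotopy $R_t:X\cup\C{D}\to X\cup\C{D}$ with $R_0=\id$ and $R_1=\psi\pi$, where $R_1$ factors through $X/D$. Given a pair null-homotopy $G:\C{X}\to X/D$ of $q$, set $f_0=\psi\circ G$. On $X$ one has $f_0|_X=\psi q=\psi\pi i\simeq i$ via $R_t$, and on $\C{A}$ one has $f_0|_{\C{A}}=\psi(p)=$ the tip, which is homotopic to the inclusion $\C{A}\hookrightarrow\C{D}$ via the same contraction. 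I would then correct $f_0$ into the desired $f$ by extending this correcting homotopy, which is defined on $X\cup\C{A}$, over $\C{X}$ using the homotopy extension property, so that the endpoint $f$ satisfies $f|_{X\cup\C{A}}=\id$ and still lands in $X\cup\C{D}$.

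The main obstacle is exactly the point flagged before the statement: ensuring the correction is simultaneously valid on $X$ and on $\C{A}$, i.e.\ that the homotopy equivalence is compatible with the pair structure. Concretely, the two deformations used to straighten $f_0$ — the homotopy $R_t$ on the $X$-part and the cone contraction on the $\C{A}$-part — must agree on the overlap $X\cap\C{A}=A$. I would arrange this by taking $R_t$ to restrict on $\C{D}$ to the straight cone contraction towards the tip: that restriction in turn restricts to the cone contraction of the subcone $\C{A}$ and matches, on the base $A$, the $X$-side deformation, so that the correcting homotopy is well defined on all of $X\cup\C{A}$; extending it over $\C{X}$ is then an application of the homotopy extension property. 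Equivalently, $2.\Rightarrow 1.$ can be seen as lifting $G$ through $\pi$: on $X/D\setminus\{p\}$ the lift is forced to equal $G$, while over $\pi^{-1}(p)=\C{D}$ the cone coordinate provides the room to inflate the collapsed point $p$; the delicate part is the continuity of this lift across $\C{D}$, once more governed by the homotopy extension property.
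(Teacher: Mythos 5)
Your direction $1.\Rightarrow 2.$ is correct and is essentially the paper's argument: compose $f$ with the collapse $\pi:X\cup\C{D}\to X/D$ and use $f(\C{A})\subseteq\C{D}$ to see that the resulting homotopy keeps $A$ at the point $p$ throughout.

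The gap is in the last step of $2.\Rightarrow 1.$ Your correcting homotopy $k_t=R_{1-t}|_{X\cup\C{A}}$ is indeed well defined and continuous (choosing $R_t$ to restrict to the radial contraction on $\C{D}$ handles the overlap on $A$, as you note), and it deforms $f_0|_{X\cup\C{A}}$ to the inclusion. But extending it over $\C{X}$ starting from $f_0$ is precisely an instance of the homotopy extension property of the pair $(\C{X},X\cup\C{A})$, which is not among the hypotheses and does not follow from them: the proposition only assumes that $(X,D)$ has the homotopy extension property and that $A$ is an arbitrary compact subset of $D$. For instance, with $X=D=[0,1]$ and $A=\{0\}\cup\{1/n:n\geq 1\}$ the hypotheses hold, yet $X\cup\C{A}$ fails to be locally connected at interior points of the ray over $0$, so $(\C{X}\times\{0\})\cup\bigl((X\cup\C{A})\times[0,1]\bigr)$ is not locally connected and therefore cannot be a retract of the locally connected space $\C{X}\times[0,1]$; hence $(\C{X},X\cup\C{A})$ does not have the homotopy extension property. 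One also cannot fall back on Borsuk's homotopy extension theorem, since the target $X\cup\C{D}$ is not assumed to be an ANR. This is exactly the difficulty the paper's proof is built to avoid: it first produces a map $g:\C{X}\to X\cup\C{D}$ that is already the identity on $X$ and whose behaviour on $\C{A}$ is explicitly known (traverse the ray at double speed, then sit at the tip of $\C{D}$), and then repairs $\C{A}$ by an explicit reparametrization $h$ of the cone coordinate, varying continuously with $d(x,A)$, with no further homotopy extension needed. Your argument does go through whenever $(\C{X},X\cup\C{A})$ has the homotopy extension property (e.g.\ for CW or simplicial pairs), but it does not prove the proposition in the stated generality.
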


\begin{proof}
$1.\Rightarrow 2.$ The function~$f$ is a null-homotopy for pairs of the inclusion map~$i_{X}:(X,A)\to (X\cup\C D,\C{A})$. Consider the quotient map~$p:X\cup\C D\to(X\cup\C D)/\C D$ and observe that~$(X\cup\C D)/\C D$ can be identified with~$X/D$. The function~$p\circ i_{X}:(X,A)\to (X/D,p)$ is precisely~$q$, and the null-homotopy of~$i_{X}$ composed with~$p$ is a null-homotopy of~$q$.

$2.\Rightarrow 1.$ Now assume that~$q$ is null-homotopic.

First, we show that the inclusion~$i_{X}:X\to X\cup\C D$
is null-homotopic. The obvious null-homotopy of the inclusion~$i_D:D\to X\cup \C{D}$ can be extended to a homotopy
\begin{equation*}
F_t:X\to X\cup \C{D}
\end{equation*}
from~$F_0=i_X$ to some~$F_1$ which is constant on~$D$. Let
\begin{equation*}
\widetilde{F_{1}}:X/D\to X\cup\C D
\end{equation*}
be the continuous map induced by~$F_{1}$, i.e.~satisfying~$F_{1}=\widetilde{F_{1}}\circ q$. Let
\begin{equation*}
H_{t}:(X,A)\to (X/D,p)
\end{equation*}
be a homotopy between~$H_{0}=q$ and the constant map~$H_{1}=p$. Observe that~$\widetilde{F_{1}}\circ H_{t}:X\to X\cup\C D$ is a homotopy from~$\widetilde{F_{1}}\circ q=F_{1}$ to~$\widetilde{F_{1}}\circ H_{1}$ which is constant. Therefore, the inclusion~$i_{X}:X\to X\cup\C D$ is homotopic to~$F_{1}$ which is null-homotopic, so~$i_{X}$ is
null-homotopic. The null-homotopy of~$i_X$ is given by~$K_{t}=F_{2t}$ for~$0\leq t\leq1/2$
and~$K_{t}=\widetilde{F_{1}}\circ H_{2t-1}$ for~$1/2\leq t\leq 1$.

The null-homotopy~$K_{t}$ can be seen as a function~$g:\C X\to X\cup\C D$ which is the identity on~$X$. However,~$g$ is not the identity on~$\C{A}$, but we show how to modify~$g$.

We use the following notation:~$\C{X}$ is the quotient of~$[0,1]\times X$ obtained by identifying all the points~$(1,x)$, so we can express any point of~$\C{X}$ as the equivalence class of a pair~$(t,x)\in [0,1]\times X$, denoted~$[t,x]$. Note that~$[1,x]=[1,x']$ for all~$x,x'\in X$.

With this notation, we can see how~$g$ is defined on~$\C{A}$. Let~$\tau$ be the tip of~$\C{D}$. For~$x\in A$ and~$t\in[0,1]$, one has
\begin{equation*}
g([t,x])=\begin{cases}
K_{t}(x)=F_{2t}(x)=[2t,x]&\text{if }0\leq t\leq 1/2,\\
K_{t}(x)=\tau&\text{if }1/2\leq t\leq 1.
\end{cases}
\end{equation*}

Observe that~$g$ is constant on the segment~$S=\{[t,x]:t\in [1/2,1],x\in A\}$, and the idea is to contract~$S$ to a point as follows.


Let~$D>\max_{x\in X}d(x,A)$ and~$h:\C X\to\C X$ be the continuous function defined by
\begin{equation*}
h([t,x])=\begin{cases}
[(2-d(x,A)/D)t,x] & \text{if }0\leq t\leq1/2,\\
[1+(d(x,A)/D)(t-1),x] & \text{if }1/2\leq t\leq 1,
\end{cases}
\end{equation*}
and represented on Figure \ref{fig_h}.

\begin{figure}[h]
\centering
\includegraphics{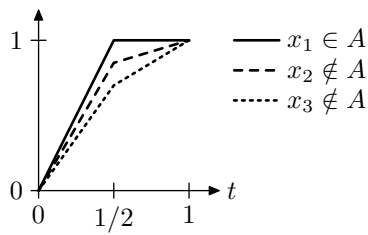}
\caption{Graph of~$t\mapsto h_1(t,x)$, where~$h(t,x)=[h_1(t,x),h_2(t,x)]$, for different~$x$'s}\label{fig_h}
\end{figure}

First,~$h$ is surjective because for each~$x\in X$,~$h$ sends the segment~$\{[t,x]:t\in[0,1]\}$ to itself and fixes its endpoints, so~$h$ sends the segment onto itself. As~$\C{X}$ is compact Hausdorff,~$h$ is a quotient map.

We claim that the function~$f:\C{X}\to X\cup\C{D}$ satisfying~$g=f\circ h$ is well-defined. Indeed, if~$h(u)=h(v)$, then~$u=v$ or~$u,v\in S$, therefore~$g(u)=g(v)$. As~$h$ is a quotient map and~$g$ is continuous,~$f$ is continuous as well.

The restriction of~$f$ to~$X$ (identified with~$\{[0,x]:x\in X\}$) is the identity, because~$h|_X=g|_X=\id_X$. The function~$h$ sends~$\C{A}$ onto~$\C{A}$, and coincides with~$g$ on~$\C{A}$, so the restriction of~$f$ to~$\C{A}$ is the identity.
\end{proof}

\begin{proof}[Proof of Theorem \ref{thm_surj_prop_cone}]
We combine Lemma \ref{lem_surj_prop_cone} and Proposition \ref{prop_quotient_map}, which is possible because the pair~$(X,X\setminus \op{C})$ has the homotopy extension property.
\end{proof}

An interesting consequence is that it is always possible to replace a pair~$(X,A)$ by the single space~$X/A$.
\begin{corollary}[Pair vs quotient]
Let~$(X,A)$ be an almost Euclidean compact pair. The pair~$\C{X,A}$ has the surjection property if and only if~$\C{X/A,\emptyset}=(\C{X/A},X/A)$ has the surjection property.
\end{corollary}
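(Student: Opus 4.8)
The plan is to deduce the equivalence from Theorem~\ref{thm_surj_prop_cone} applied to the two almost Euclidean pairs $(X,A)$ and $(X/A,\emptyset)$. First I would check that $(X/A,\emptyset)$ is indeed almost Euclidean: writing $\pi:X\to X/A$ for the quotient map and $p=\pi(A)$ for the collapsed point, $\pi$ restricts to a homeomorphism $X\setminus A\to (X/A)\setminus\{p\}$, so it carries the dense set of Euclidean points of $X\setminus A$ to Euclidean points of $(X/A)\setminus\{p\}$, which are dense in $X/A$ (the case $A=\emptyset$ being trivial). By Theorem~\ref{thm_surj_prop_cone} it then suffices to prove the equivalence of the two negated conditions: (i) there exists a regular $n$-cell $C\subseteq X\setminus A$ such that $q_C:(X,A)\to(\Sp_n,s)$ is null-homotopic; (ii) there exists a regular $n$-cell $C'\subseteq X/A$ such that $q_{C'}:(X/A,\emptyset)\to(\Sp_n,s)$ is null-homotopic. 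These are exactly the failures of the surjection property for $\C{X,A}$ and for $(\C{X/A},X/A)$, so their equivalence is the corollary.

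The bridge between the two sides is the factorization $q_C=q_{C'}\circ\pi$: since $A\subseteq X\setminus\op{C}$, the map $q_C$ is constant on $A$, hence descends through $\pi$, and the induced map is precisely the quotient map $q_{C'}$ attached to the cell $C'=\pi(C)$. Thus $\pi$ sets up a bijection between regular cells of $X\setminus A$ and regular cells of $X/A$ avoiding $p$, compatible with these quotient maps. The implication $(i)\Rightarrow(ii)$ is the easy one: if $q_C$ is null-homotopic as a map of pairs, the null-homotopy is constant on $A$ at each time, so it factors through the quotient map $[0,1]\times X\to[0,1]\times(X/A)$ (a quotient map because $[0,1]$ is locally compact), yielding a free null-homotopy of $q_{C'}$. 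Equivalently, and more vividly, one can argue at the level of functions via Lemma~\ref{lem_surj_prop_cone}: a non-surjective $f:\C X\to\C X$ that is the identity on $X\cup\C{A}$ descends along the cone map $\C{\pi}:\C X\to\C{X/A}$ (a quotient map, as $\C X$ is compact and $\C{X/A}$ is Hausdorff) to a map $g$ that is the identity on $X/A$; because $f$ fixes $\C{A}$ pointwise, its omitted points lie outside $\C{A}$, where $\C{\pi}$ has singleton fibres, so $g$ is again non-surjective.

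The hard direction $(ii)\Rightarrow(i)$, which I expect to be the main obstacle, amounts to promoting a \emph{free} null-homotopy of $q_{C'}$ to a null-homotopy of $q_{C'}$ fixing $p\mapsto s$ (equivalently, a null-homotopy of $q_C$ as a map of pairs). The crucial reduction is to arrange that the witnessing cell $C'$ avoids $p$: if a null-homotopic $q_{C'}$ is given with $p\in C'$, then $p$ lies in an open cell and is Euclidean, so one may shrink to a regular cell $C''\subseteq\op{C'}\setminus\{p\}$, and by the homotopy invariance of quotient maps under shrinking of the cell (as in the proof of Theorem~\ref{thm: infinite union}) the map $q_{C''}\simeq q_{C'}$ is still null-homotopic while avoiding $p$. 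Once $p\notin C'$, the cell $C'$ is closed and $q_{C'}$ is the collapse of $D'=(X/A)\setminus\op{C'}$, which contains the open neighborhood $(X/A)\setminus C'$ of $p$; hence $q_{C'}$ is \emph{constant equal to $s$ on a neighborhood of $p$}. The remaining step is to base the free null-homotopy at this locally constant point: a Urysohn function supported in $(X/A)\setminus C'$ lets one reparametrize the homotopy so that $p$ is held fixed at $s$, and the passage to a genuinely based null-homotopy is then supplied by the simplicity of $\Sp_n$ (the trivial action of $\pi_1(\Sp_n)$ on homotopy classes of maps into $\Sp_n$), or equivalently by detecting null-homotopy of the collapse $q_{C'}$ through the basepoint-independent cohomological obstruction of Hopf's classification theorem from the appendix. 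Matching the null-homotopic cells on the two sides then gives the equivalence of (i) and (ii), and hence the corollary.
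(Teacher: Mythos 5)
Your proposal is correct and follows essentially the same route as the paper: apply Theorem~\ref{thm_surj_prop_cone} to both $(X,A)$ and $(X/A,\emptyset)$ and compare the two families of quotient maps via the factorization $q_C=q_{C'}\circ\pi$. The entire "hard direction" you sketch (promoting a free null-homotopy of $q_{C'}$ to one based at $p\mapsto s$) is exactly Proposition~\ref{prop_quotient_pair} in the appendix, which the paper simply cites; if you do re-derive it, note that the Urysohn reparametrization alone does not base the homotopy and that your Hopf-classification alternative is unavailable without a bound $\dim(X)\leq n$, so the argument must rest on the simplicity of $\Sp_n$ together with the homotopy extension property into an ANR (the paper's Proposition~\ref{prop_quotient_pair} treats $n=1$ via the group structure of $\Sp_1$ and $n\geq 2$ via Borsuk's homotopy extension theorem). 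Your explicit shrinking of a cell $C'\ni p$ to one avoiding $p$ addresses a case the paper's one-line proof leaves implicit, and is a worthwhile addition.
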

\begin{proof}
Note that~$X/A$ is almost Euclidean, so we can apply Theorem \ref{thm_surj_prop_cone} to both~$(X,A)$ and~$(Y,B)=(X/A,\emptyset)$. For each regular cell~$C\subseteq X\setminus A$, the corresponding quotient map~$q:(X,A)\to (\Sp_n,s)$ is null-homotopic if and only if the quotient map~$q':X/A\to\Sp_n$ is null-homotopic, applying Proposition \ref{prop_quotient_pair}. Therefore, Theorem \ref{thm_surj_prop_cone} applied to~$(X,A)$ and~$(Y,B)$ gives the equivalence.
\end{proof}

Another consequence is that in Proposition \ref{prop_quotient_map}, the condition that~$f:\C{X}\to\C{X}$ is the identity on~$X\cup \C{A}$ can be equivalently replaced by an apparently weaker condition.
\begin{corollary}
Let~$(X,D)$ satisfy the homotopy extension property and~$A$ a compact subset of~$D$. The following statements are equivalent:
\begin{itemize}
\item There exists a continuous function~$f:\C{X}\to X\cup\C{D}$ which is the identity on~$X\cup\C{A}$,
\item There exists a continuous function~$f:\C{X}\to X\cup\C{D}$ which is the identity on~$X$ and satisfies~$f(\C{A})\subseteq\C{D}$.
\end{itemize}
\end{corollary}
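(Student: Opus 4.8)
The implication from the first statement to the second is immediate: a function $f$ that is the identity on $X\cup\C{A}$ is in particular the identity on $X$, and since $A\subseteq D$ it satisfies $f(\C{A})=\C{A}\subseteq\C{D}$.

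For the converse, the plan is to reduce to Proposition~\ref{prop_quotient_map}, whose first condition is exactly the first statement of the present corollary; it therefore suffices to derive from the second statement that the quotient map $q:(X,A)\to(X/D,p)$ is null-homotopic. Starting from a function $f:\C{X}\to X\cup\C{D}$ that is the identity on $X$ and satisfies $f(\C{A})\subseteq\C{D}$, I would use the notation $[t,x]$ for points of $\C{X}$ and reinterpret $f$ as the homotopy $K_t(x)=f([t,x])$. Since $f$ is the identity on $X$ one has $K_0=i_X$, the inclusion $X\to X\cup\C{D}$, and since $[1,x]$ is the tip of $\C{X}$ the map $K_1$ is constant; hence $K_t$ is a null-homotopy of $i_X$.

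The next step is to compose with the quotient map collapsing $\C{D}$, namely $p:X\cup\C{D}\to(X\cup\C{D})/\C{D}=X/D$, which sends $\C{D}$ to the basepoint. The composite $p\circ K_t$ is a homotopy from $p\circ K_0=q$ to the map $p\circ K_1$, which is constant. The key observation is that the hypothesis $f(\C{A})\subseteq\C{D}$ gives $K_t(A)\subseteq\C{D}$ for every $t$, so that $p\circ K_t$ sends $A$ to the basepoint throughout; this makes $p\circ K_t$ a homotopy of pairs $(X,A)\to(X/D,p)$ and exhibits $q$ as null-homotopic. Proposition~\ref{prop_quotient_map} then produces a function realizing the first statement.

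The step that requires care is precisely the verification that $p\circ K_t$ is a homotopy of pairs at every $t$ and not merely at its endpoints; this is where $f(\C{A})\subseteq\C{D}$ is used in full, together with the fact that the tip $[1,x]$ belongs to $\C{A}$ whenever $A\neq\emptyset$, so that $K_1$ takes its constant value inside $\C{D}$ and $p\circ K_1$ is genuinely the constant map at the basepoint. The degenerate case $A=\emptyset$ poses no difficulty, as the two statements then both reduce to the condition that $f$ be the identity on $X$.
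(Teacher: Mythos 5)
Your proof is correct and follows essentially the same route as the paper: both read the second condition as a null-homotopy of the inclusion of pairs $(X,A)\to(X\cup\C{D},\C{D})$, compose with the collapse of $\C{D}$ to get a null-homotopy of $q:(X,A)\to(X/D,p)$, and invoke Proposition~\ref{prop_quotient_map}. Your write-up is just a more explicit version of the paper's one-line argument, with the endpoint and $A=\emptyset$ checks spelled out.
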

\begin{proof}
The second condition precisely means that the inclusion map
\begin{equation*}
i:(X,A)\to (X\cup\C{D},\C{D})
\end{equation*}
is null-homotopic. It implies that the quotient map~$q:(X,A)\to (X/D,p)$ is null-homotopic, which in turn implies the first condition by Proposition \ref{prop_quotient_map}.
\end{proof}

\subsection{Retractions to spheres}
We give a simple sufficient condition implying that~$\C{X,A}$ does not have the surjection property. We will later show that this condition is also necessary under certain assumptions.
\begin{theorem}\label{thm_retraction_quotient}
Let~$(X,A)$ be a compact pair and~$C\subseteq X\setminus A$ be a regular~$n$-cell. If there exists a retraction~$r:X\setminus \op{C}\to \bd{C}$ which is constant on~$A$, then the quotient map~$q:(X,A)\to (\Sp_n,s)$ is null-homotopic, hence~$\C{X,A}$ does not have the surjection property.
\end{theorem}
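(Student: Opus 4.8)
The plan is to factor the quotient map $q$ through the cell $C$ itself, which is homeomorphic to the contractible ball $\BB_n$, and then contract. First I would upgrade the given retraction to a retraction of all of $X$ onto $C$. Since $C$ is the continuous image of the compact ball $\BB_n$ in a Hausdorff space, it is closed in $X$, and $\op{C}$ is open by definition, so $C$ and $X\setminus\op{C}$ form a closed cover of $X$ meeting exactly in $\bd{C}$. The map $r$ is the identity on $\bd{C}$, so pasting $r$ with $\id_C$ yields a continuous retraction $\tilde r\colon X\to C$ with $\tilde r|_C=\id_C$ and $\tilde r|_{X\setminus\op{C}}=r$. Because $A\subseteq X\setminus\op{C}$ and $r$ is constant on $A$, we obtain $\tilde r(A)=\{p_0\}$ for a single point $p_0\in\bd{C}$.

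Next I would identify $q$ with $\pi\circ\tilde r$, where $\pi\colon C\to C/\bd{C}$ is the collapse map. Under the standard homeomorphism $X/(X\setminus\op{C})\cong C/\bd{C}\cong\Sp_n$, one checks directly on the two pieces of the cover that $\pi\circ\tilde r=q$: on $\op{C}$ both maps are the canonical injection onto $\Sp_n\setminus\{s\}$, and on $X\setminus\op{C}$ both send everything to $s$ (for $\pi\circ\tilde r$ because $\tilde r$ lands in $\bd{C}$, which $\pi$ collapses). Thus $q$ factors through $C\cong\BB_n$.

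It then remains to null-homotope $q=\pi\circ\tilde r$ as a map of pairs. Transporting a straight-line contraction of $\BB_n$ to the point $p_0$ through the homeomorphism $\BB_n\cong C$ gives a homotopy $G_t\colon C\to C$ with $G_0=\id_C$, $G_1\equiv p_0$, and $G_t(p_0)=p_0$ for every $t$. Setting $H_t=\pi\circ G_t\circ\tilde r$ gives a homotopy from $H_0=q$ to the constant map at $s=\pi(p_0)$. The hypothesis that $r$ is constant on $A$ is exactly what makes this a homotopy of pairs: since $\tilde r(A)=\{p_0\}$ and $G_t$ fixes $p_0$, we have $H_t(A)=\{\pi(p_0)\}=\{s\}$ for all $t$, so $A$ stays at the basepoint throughout. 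Hence $q$ is null-homotopic.

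Finally, since the implication $1.\Rightarrow 2.$ of Theorem~\ref{thm_surj_prop_cone} holds without the almost Euclidean hypothesis, its contrapositive shows that $\C{X,A}$ does not have the surjection property. The only genuinely delicate point is verifying that the naive contraction is compatible with the pair structure, which is precisely where the ``constant on $A$'' hypothesis is used; everything else reduces to the pasting lemma and the factorization $q=\pi\circ\tilde r$, which are routine.
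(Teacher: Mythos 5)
Your proof is correct and follows essentially the same route as the paper: extend the retraction to $\tilde r\colon X\to C$ by pasting with $\id_C$, observe that $q$ factors as the collapse map composed with $\tilde r$, null-homotope via a contraction of $C\cong\BB_n$ fixing the point $p_0=\tilde r(A)$, and conclude by the implication $1.\Rightarrow 2.$ of Theorem~\ref{thm_surj_prop_cone}, which indeed does not require the almost Euclidean hypothesis.
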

Note that a retraction~$r:X\setminus \op{C}\to\bd{C}$ is the same thing as a retraction~$r':X\to C$ which sends~$X\setminus \op{C}$ to~$\bd{C}$ (indeed,~$r'$ can be obtained from~$r$ by extending it as the identity on~$C$, and~$r$ can be obtained from~$r'$ by restriction to~$X\setminus \op{C}$).
\begin{proof}
Let~$D=X\setminus \op{C}$ and~$r_0:D\to\bd C$ be a retraction with constant value~$p$ on~$A$. Let~$r:X\to C$ be the retraction extending~$r_0$, defined as the identity on~$C$. As a function of pairs,~$r:(X,A)\to (C,p)$ is null-homotopic. Indeed, there is a contraction of~$C\cong \BB_n$ that fixes~$p$, i.e.~the identity~$\id_C:(C,p)\to (C,p)$ is null-homotopic, so~$r=\id_C\circ r$ is null-homotopic.

The quotient of~$(X,A)$ by~$D$ is~$(\Sp_n,s)$, and the quotient of~$(C,p)$ by~$\bd{C}$ is~$(\Sp_n,s)$. Let~$q_C:(X,A)\to (\Sp_n,s)$ and~$q_C':(C,p)\to (\Sp_n,s)$ be the quotient maps. As~$r$ is the identity on~$C$, one has~$q_C=q_C'\circ r$. Therefore, the null-homotopy of~$r$ induces a null-homotopy of~$q_C$. Theorem \ref{thm_surj_prop_cone} implies that~$\C{X,A}$ does not have the surjection property.
\end{proof}

\begin{example}[Graph]\label{ex_graph}
\label{exa: retraction}Let~$X$ be a topological graph and~$C$ be an edge. If~$C$ is neither contained in a cycle nor in a path from a point of~$A$ to another point of~$A$ (see an example on Figure \ref{fig_graph}), then~$\C{X,A}$ does not have the surjection property because there is a retraction of~$X\setminus \op{C}$ to~$\bd{C}$, as we now explain.  We will see with Theorem \ref{thm_graph} that these conditions are tight.

As~$C$ does not belong to a cycle, its two endpoints~$a,b$ belong to two distinct connected components of~$X\setminus \op{C}$. As there is no path from~$A$ to~$A$ through~$C$, the connected component of~$a$ or~$b$, say~$a$, is disjoint from~$A$. Let~$r:X\setminus \op{C}\to\bd{C}$ send that component to~$a$ and all the rest to the other endpoint~$b$. It is a retraction which is constant on~$A$.

\begin{figure}[h]
\centering
\includegraphics{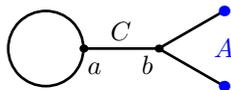}
\caption{A pair~$(X,A)$ whose cone pair~$\C{X,A}$ does not satisfy the surjection property}\label{fig_graph}
\end{figure}
\end{example}

\begin{example}
\label{ex_torus_disk}
Let~$X=\Sp_1\times\Sp_1$ be the torus with a disk attached along one of the two circles, and~$A=\emptyset$. The disk is a regular cell, and the torus retracts to the boundary of the disk, so~$\C{X,A}=(\C{X},X)$ does not have the surjection property.

\begin{figure}[h]
\centering
\includegraphics[height=2cm]{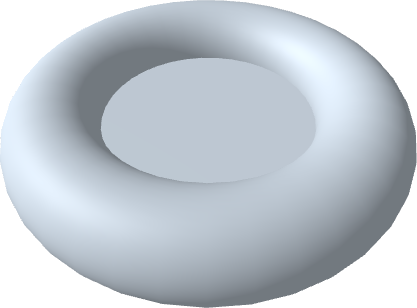}
\caption{A space~$X$ whose cone pair~$(\C{X},X)$ does not satisfy the surjection property}
\end{figure}
\end{example}

The next result shows that the assumptions in Theorem \ref{thm_retraction_quotient} can be weakened.
\begin{proposition}
Let~$(X,A)$ be a compact pair and~$C\subseteq X$ be a regular cell which is not contained in~$A$. If there exists a retraction~$r:X\setminus \op{C}\to\bd C$ whose restriction~$r|_A:A\to\bd C$ is null-homotopic, then~$\C{X,A}$ does not have the surjection property.
\end{proposition}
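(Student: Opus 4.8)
The plan is to reduce, exactly as in the proof of Theorem~\ref{thm_retraction_quotient}, to showing that the quotient map $q_C$ is null-homotopic; the only new feature is that the retraction $r$ is no longer constant on $A$ but merely null-homotopic there. First note that for $r|_A$ to make sense one needs $A\subseteq X\setminus\op C$, so $A\subseteq D:=X\setminus\op C$, and hence the quotient map $q_C:(X,A)\to(\Sp_n,s)$ collapsing $D$ is a genuine map of pairs sending $A$ to the basepoint $s$. Since $\op C\neq\emptyset$, the set $X\cup\C D$ is a proper subset of $\C X$, so any continuous $f:\C X\to X\cup\C D$ that is the identity on $X\cup\C A$ is non-surjective (viewed as a self-map of $\C X$) and witnesses that $\C{X,A}$ fails the surjection property. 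By Proposition~\ref{prop_quotient_map} applied to $(X,D)$ — which has the homotopy extension property, being the complement of the open part of a regular cell, as already used in Theorem~\ref{thm_surj_prop_cone} — together with the subset $A\subseteq D$, such an $f$ exists as soon as $q_C$ is null-homotopic as a map of pairs.

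So the task is to null-homotope $q_C$. I would extend $r$ to the retraction $\rho:X\to C$ that is the identity on $C$ and equals $r$ on $D$ (well defined, since $r$ fixes $\bd C=C\cap D$), so that $q_C=q_C'\circ\rho$, where $q_C':(C,\bd C)\to(\Sp_n,s)$ collapses $\bd C$. Because $\rho(A)=r(A)\subseteq\bd C$, the map $\rho$ is a map of pairs $(X,A)\to(C,\bd C)$, and composing any pair-homotopy of $\rho$ with $q_C'$ yields a pair-homotopy of $q_C$ that keeps $A$ at $s$. Thus it suffices to null-homotope $\rho:(X,A)\to(C,\bd C)$ through maps of pairs. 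Here the hypothesis enters: $\rho|_A=r|_A$ is null-homotopic in $\bd C$, say via $K_t:A\to\bd C$ with $K_1$ the constant map at some $p\in\bd C$; and $C\cong\BB_n$ is contractible, via a straight-line contraction to $p$ that fixes $p$.

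The main obstacle is to upgrade these two facts into a single pair-null-homotopy of $\rho$, and this is where a homotopy-extension (cofibration) property is needed. The plan is to extend $K_t$ to a homotopy $G_t:X\to C$ with $G_0=\rho$ and $G_t|_A=K_t$, so that $G_1|_A$ is the constant $p$, and then to apply the contraction of $C$ to $p$, which keeps the image of $A$ at $p\in\bd C$; concatenating gives $\rho\simeq\,$const as maps of pairs $(X,A)\to(C,\bd C)$, and composing with $q_C'$ null-homotopes $q_C$, finishing the proof via the first paragraph. The delicate point is the extension of $K_t$ from $A$ to $X$ while keeping the initial map equal to $\rho$: it requires the homotopy extension property for the relevant pair into the ANR $C$, which holds in the setting of this section (almost Euclidean pairs assembled from ANR cones) and can be arranged by first extending $K_t$ over $D$ and then, using the homotopy extension property of $(X,D)$ already invoked above, over all of $X$. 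I expect this extension to be the only nonroutine step; the remainder is the bookkeeping of concatenating homotopies and checking that the image of $A$ stays inside $\bd C$ throughout.
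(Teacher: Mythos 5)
Your proof is correct, but it takes a genuinely different route from the paper's. The paper reduces to Theorem~\ref{thm_retraction_quotient} as a black box: it picks a smaller regular cell $C'\subseteq\op{C}\setminus A$, extends $r$ to a retraction $r_1:X\setminus\op{C'}\to C\setminus\op{C'}$, composes with the radial retraction of the annulus $C\setminus\op{C'}$ onto its inner sphere $\bd{C'}$, and then uses Borsuk's homotopy extension theorem (with target the ANR $\bd{C'}\cong\Sp_{n-1}$) to deform the result into a retraction that is genuinely \emph{constant} on $A$. You instead keep the original cell $C$ and re-run the internals of Theorem~\ref{thm_retraction_quotient}: factor $q_C=q_C'\circ\rho$, null-homotope $\rho:(X,A)\to(C,\bd{C})$ as a map of pairs by first extending the null-homotopy of $r|_A$ over $X$ and then contracting $C$ to a boundary point, and conclude via Proposition~\ref{prop_quotient_map}. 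Both arguments hinge on the same tool at the same spot (homotopy extension into an ANR — the paper with target $\Sp_{n-1}$, you with target $\BB_n$); yours avoids the cell-shrinking and the annulus retraction, at the cost of not reusing Theorem~\ref{thm_retraction_quotient} verbatim. One small correction: the extension of $K_t$ to $G_t:X\to C$ with $G_0=\rho$ does not need any special structure of $(X,A)$ ``from this section'' — the proposition is stated for arbitrary compact pairs — and the right justification is Borsuk's homotopy extension theorem for compact pairs mapping into a compact ANR (the remark after Lemma~\ref{lem_controlled_hep}, citing Theorem 4.1.3 in \cite{Mill01}), applied with $Y=C\cong\BB_n$; your fallback of going through $(D,A)$ and then the cofibration $(X,D)$ also works but is unnecessary. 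With that reference in place the argument is complete.
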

\begin{proof}
We show that there is a regular cell~$C'\subseteq C\setminus A$ and a retraction~$r':X\setminus \op{C'}\to\bd{C'}$ which is constant on~$A$, and then apply Theorem \ref{thm_retraction_quotient}.

As~$C\nsubseteq A$ and~$A$ is closed, there exists a regular cell~$C'\subseteq \op{C}\setminus A$. The retraction~$r:X\setminus \op{C}\to\bd{C}$ can be extended as the identity on~$C\setminus \op{C'}$, giving a retraction~$r_1:X\setminus \op{C'}\to C\setminus \op{C'}$. There is a retraction~$r_2:C\setminus \op{C'}\to\bd{C'}$, because~$C\setminus \op{C'}$ is a thickened sphere and~$\bd{C'}$ is its inner sphere.

Let~$f=r_2\circ r_1:X\setminus \op{C'}\to\bd{C'}$. The restriction of~$f$ to~$A\cup\bd{C'}$ is homotopic to a function~$h:A\cup\bd{C'}\to\bd{C'}$ which is constant on~$A$ and is the identity on~$\bd{C'}$ (extend the null-homotopy of~$r|_A$ as the identity on~$\bd{C'}$, which is disjoint from~$A$). As~$\bd{C'}$ is an ANR (it is indeed a sphere), Borsuk's homotopy extension theorem implies that~$f$ is homotopic to an extension of~$h$. Such an extension is a retraction~$r':X\setminus \op{C'}\to\bd{C'}$ which is constant on~$A$.
\end{proof}

\subsection{Cycles}
It was proved in \cite{AH22} that when the space~$X$ is a finite topological graph, the surjection property of its cone can be characterized in terms of the cycles of~$X$.
\begin{theorem}[Theorem 4.4 in \cite{AH22}]\label{thm_graph}
Let~$(X,A)$ be a pair consisting of a topological finite graph~$X$ and a subset~$A$ of vertices.~$\C{X,A}$ has the surjection property if and only if every edge belongs to a cycle or a path between two points of~$A$.
\end{theorem}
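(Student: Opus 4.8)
The plan is to prove Theorem \ref{thm_graph} by combining the general retraction result (Theorem \ref{thm_retraction_quotient}) for the ``only if'' direction with the quotient-map characterization (Theorem \ref{thm_surj_prop_cone}) for the ``if'' direction. The key observation is that for a one-dimensional space~$X$, the only regular cells to worry about are the~$1$-cells, i.e.\ the edges, and the associated quotient maps land in~$\Sp_1$. Thus everything reduces to a statement about~$\pi_1$ and whether certain loops in~$X$ are null-homotopic.

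For the contrapositive of the ``if'' direction (equivalently, the ``only if'' direction), suppose some edge~$C$ belongs neither to a cycle nor to a path between two points of~$A$. This is precisely the situation of Example \ref{ex_graph}: first I would show that since~$C$ lies in no cycle, removing its open edge~$\op{C}$ disconnects its two endpoints~$a,b$ into distinct components of~$X\setminus\op{C}$; then, since no path through~$C$ joins two points of~$A$, at least one of those two components (say the one containing~$a$) is disjoint from~$A$. Collapsing that component to~$a$ and the rest to~$b$ gives a retraction~$r:X\setminus\op{C}\to\bd{C}=\{a,b\}$ that is constant on~$A$. Theorem \ref{thm_retraction_quotient} then immediately yields that~$\C{X,A}$ fails the surjection property.

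For the forward (``if'') direction I would argue the contrapositive using Theorem \ref{thm_surj_prop_cone}: assume~$\C{X,A}$ does not have the surjection property, so there is some edge~$C$ (a regular~$1$-cell) with~$q_C:(X,A)\to(\Sp_1,s)$ null-homotopic, and I must produce a cycle through~$C$ or an~$A$-to-$A$ path through~$C$. The plan is to contrapose: assume~$C$ lies in no cycle and in no such path, and derive that~$q_C$ \emph{is} not null-homotopic, contradicting the hypothesis. Concretely, if~$C$ avoids both, the retraction of the previous paragraph would exist, but I want the sharper statement, so instead I would reason directly about the induced map on~$\pi_1$ (or~$H_1$). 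The quotient map~$q_C$ collapses~$X\setminus\op{C}$ to a point, and~$q_C$ is null-homotopic exactly when the loop obtained by traversing~$C$ and closing it up through~$X\setminus\op{C}$ is trivial; since~$q_C$ factors through~$X/(X\setminus\op{C})\cong\Sp_1$, its homotopy class is detected by whether the edge~$C$ carries a nontrivial~$1$-cycle relative to the collapsed complement. Detecting non-triviality of~$q_C$ is most cleanly done at the level of~$H_1$: one checks that~$C$ belongs to a cycle (in the graph-theoretic sense) precisely when the generator corresponding to~$C$ survives in the relevant relative homology, and the~$A$-to-$A$ path condition accounts for the relative classes arising from collapsing~$A$.

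The main obstacle I anticipate is the bookkeeping in the forward direction: relating the graph-combinatorial notions (``edge in a cycle'', ``path between two points of~$A$'') to the topological/homotopical statement about~$q_C$, especially handling the role of~$A$ correctly. It is tempting to pass to~$X/A$ (using the Pair-vs-quotient corollary, which makes~$A$ a single collapsed vertex) so that an~$A$-to-$A$ path through~$C$ becomes an honest cycle through~$C$ in~$X/A$; after this reduction both conditions merge into the single condition ``$C$ lies on a cycle in~$X/A$,'' and the whole theorem collapses to the clean statement that for a graph~$Y$, the cone~$(\C{Y},Y)$ has the surjection property iff every edge of~$Y$ lies on a cycle. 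Proving that clean statement is a direct~$\pi_1$ / spanning-tree argument: an edge lies on no cycle iff it is a bridge iff it can be collapsed without changing the homotopy type, which is exactly when the corresponding~$q_C$ becomes null-homotopic. Verifying that the quotient~$X/A$ faithfully encodes ``path between two points of~$A$'' as ``cycle,'' and that no spurious cycles are created, is the delicate point I would have to check carefully.
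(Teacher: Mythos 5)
The paper itself does not prove Theorem~\ref{thm_graph}: it imports the statement from~\cite{AH22} and only remarks that Example~\ref{ex_graph} shows one direction and that Corollaries~\ref{cor_equivalence}--\ref{cor_dim3} generalize it. So there is no in-paper proof to match your proposal against; judged on its own, your plan is sound and is essentially the derivation the paper's machinery is designed to support. Your ``only if'' direction is literally Example~\ref{ex_graph} followed by Theorem~\ref{thm_retraction_quotient}, and is complete. For the ``if'' direction, the cleanest instantiation of your plan is Corollary~\ref{cor_equivalence} (or~\ref{cor_retraction}) with~$n=1$: since~$X$ is a one-dimensional compact ANR and~$(X,A)$ is almost~$1$-Euclidean (granting no isolated vertices outside~$A$), the surjection property of~$\C{X,A}$ is equivalent to every regular~$1$-cell belonging to a relative cycle, and what remains is exactly the combinatorial lemma you flag: an edge carries a nonzero coefficient in some $1$-chain with boundary in~$A$ if and only if it lies on a cycle or an $A$-to-$A$ path. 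That lemma is standard (the support of a $\Z/2\Z$-chain whose boundary lies in~$A$ is a subgraph in which every vertex outside~$A$ has even degree, hence decomposes into edge-disjoint cycles and $A$-to-$A$ paths), or you can bypass homology entirely: compose~$q_C$ with the cycle or path~$\gamma$ through the edge to get a degree~$\pm 1$ map to~$\Sp_1$ (based, since a pair-null-homotopy keeps~$A$ at~$s$), which cannot be null-homotopic. Two small points to tidy up: regular~$1$-cells are sub-arcs of edges rather than the edges themselves (and must avoid~$A$), so you need the observation, used in the proof of Theorem~\ref{thm: infinite union}, that~$q_{C'}$ and~$q_C$ are homotopic for nested cells in the same edge; and your reduction to~$X/A$ is workable but unnecessary once the degree argument is in place, so I would drop it rather than audit the ``spurious cycles'' issue.
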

It is assumed that cycles and paths visit each edge at most once. The result should be compared with Example \ref{ex_graph}.

We investigate how far this result extends to higher-dimensional spaces like finite simplicial complexes, and more generally almost Euclidean spaces. The higher-dimensional notions of cycles and paths between points of~$A$ are the homology cycles and cycles relative to~$A$, i.e.~the elements of~$H_n(X,A)$. We need to express the idea that a cell ``belongs'' to a relative cycle.

Let~$\RZ=\R/\Z$ be the circle group.
\begin{definition}\label{def_cycle}
Let~$(X,A)$ be a compact pair and~$n\geq 1$. A regular~$n$-cell~$C\subseteq X\setminus A$ \textbf{belongs to a relative cycle} if the canonical homomorphism
\begin{equation*}
H_n(X,A;\RZ)\to H_n(X,X\setminus \op{C};\RZ)
\end{equation*}
is non-trivial.
\end{definition}

Let us explain why this definition is meaningful. First, we use the coefficient group~$\RZ$ because it subsumes the more usual coefficients groups~$\Z$ and~$\Z/k\Z$ (see Proposition \ref{prop_coefficients}). The canonical homomorphism is non-trivial if there exists a relative cycle in~$(X,A)$ which is not cancelled when quotienting by the part of the space which is not in~$C$, so the cycle should ``visit''~$C$. An alternative way of seeing this is to consider the exact sequence (with coefficients in~$\RZ$)
\begin{equation*}
H_n(X\setminus \op{C},A)\to H_n(X,A)\to H_n(X,X\setminus \op{C}).
\end{equation*}
The second homomorphism is non-trivial if and only if the first one is non-surjective, which means that there is a relative cycle in~$(X,A)$ which is not contained in~$X\setminus \op{C}$.

\begin{remark}[Homology vs cohomology]
Here is an equivalent formulation of Definition \ref{def_cycle}, at least for pairs~$(X,A)$ where~$X$ and~$A$ are ANRs. $C$ belongs to a relative cycle if and only if the canonical homomorphism on cohomology groups with coefficients in~$\Z$
\begin{equation*}
H^n(X,X\setminus \op{C})\to H^n(X,A)
\end{equation*}
is non-trivial. Indeed,~$H_n(X,A;\RZ)$ is the Pontryagin dual of~$H^n(X,A)$ and the homomorphisms between homology and cohomology groups are dual to each other (Proposition G, \S VIII.4, p.~137 and Proposition F, \S VIII.5, p. 141 in \cite{HurWal41}).
\end{remark}

\subsubsection{Simplicial complexes}
Let us illustrate Definition \ref{def_cycle} in the case of finite simplicial complexes. A finite simplicial pair is a pair~$(X,A)$ where~$X$ is a finite simplicial complex and~$A$ is a subcomplex.

A finite simplicial complex is almost Euclidean, each maximal simplex being a regular cell. For simplicial complexes, singular homology is isomorphic to simplicial homology. Whether a simplex belongs to a relative cycle in the sense of Definition \ref{def_cycle} can be reformulated in a much more explicit way.


\begin{proposition}\label{prop_cycle_simplicial}
Let~$(X,A)$ be a finite simplicial pair and~$C$ a maximal simplex, of dimension~$n$. $C$ belongs to a relative cycle if and only if there exists an~$n$-chain with coefficients in~$\RZ$ (equivalent, in~$\Z$ or some~$\Z/k\Z$) whose boundary is contained in~$A$, assigning a non-zero coefficient to~$C$.
\end{proposition}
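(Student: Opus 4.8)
The plan is to realize the target group $H_n(X,X\setminus\op{C};\RZ)$ from Definition \ref{def_cycle} as a \emph{simplicial} relative homology group and then identify the canonical homomorphism, at the chain level, with the operation of reading off the coefficient of~$C$.

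First I would observe that, because $C$ is maximal, it is a proper face of no simplex, so deleting~$C$ while retaining all its faces leaves a genuine subcomplex $X'$ of~$X$. Its underlying space is exactly $X\setminus\op{C}$: each point of~$X$ lies in the relative interior of a unique simplex, and the only simplex whose open cell meets $\op{C}$ is~$C$ itself, so a point lies in $|X'|$ precisely when its carrier is not~$C$, i.e.~when it is not in $\op{C}$. Moreover $A\subseteq X'$, since $A$ is a subcomplex disjoint from $\op{C}$. Because the topological pair $(X,X\setminus\op{C})$ equals the simplicial pair $(|X|,|X'|)$, and singular and simplicial homology agree for simplicial pairs, we may compute $H_n(X,X\setminus\op{C};\RZ)$ as the simplicial group $H_n(X,X';\RZ)$.

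Next I would read this group off the relative chain complex $C_k(X,X';\RZ)=C_k(X;\RZ)/C_k(X';\RZ)$. This quotient vanishes in every degree except~$n$: in degree $n+1$ any simplex of~$X$ is distinct from~$C$ and hence already lies in~$X'$, while in degree~$n$ the only simplex removed is~$C$, so $C_n(X,X';\RZ)\cong\RZ$, generated by the image $[C]$. The boundary out of degree~$n$ sends $[C]$ to $[\partial C]$, which is supported on the proper faces of~$C$, all lying in~$X'$, hence $[\partial C]=0$; and the boundary into degree~$n$ is zero. Therefore $H_n(X,X';\RZ)\cong\RZ$, generated by~$[C]$.

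The key step is then to trace the canonical homomorphism. A class in $H_n(X,A;\RZ)$ is represented by an $n$-chain $z=\sum_\sigma c_\sigma\,\sigma$ with $\partial z\in C_{n-1}(A)$, that is, by an $n$-chain whose boundary is contained in~$A$. Since $A\subseteq X'$, the inclusion of pairs $(X,A)\hookrightarrow(X,X\setminus\op{C})$ induces on chains the quotient map $C_n(X,A)\to C_n(X,X')\cong\RZ\cdot[C]$, which sends~$z$ to $c_C\,[C]$. Thus the image of $[z]$ under the canonical homomorphism equals $c_C$ times the generator, so the homomorphism is non-trivial exactly when some such relative cycle~$z$ has $c_C\neq 0$; this is precisely the stated condition, and the equivalence with $\Z$ and $\Z/k\Z$ coefficients follows from Proposition \ref{prop_coefficients}. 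I expect the only delicate point to be the verification that $|X'|=X\setminus\op{C}$, which is what licenses replacing the singular group by the simplicial one; the remaining computation is a direct reading of the chain complex.
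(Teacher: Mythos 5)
Your argument is correct and is essentially the paper's own proof: both replace $X\setminus\op{C}$ by the subcomplex $D$ obtained by deleting the maximal simplex $C$ while keeping its faces, observe that the relative chain complex of $(X,D)$ is concentrated in degree $n$ with $H_n(X,D;\RZ)\cong\RZ$ generated by $[C]$ (maximality of $C$ ruling out boundaries from above), and identify the canonical homomorphism at the chain level with reading off the coefficient of $C$. You simply spell out more explicitly the identification $|D|=X\setminus\op{C}$ and the vanishing of the quotient chain groups, which the paper leaves implicit.
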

\begin{proof}

Let~$(\Delta_i)_{i\leq m}$ be the~$n$-simplices in~$X$, and assume w.l.o.g.~that~$\Delta_0$ is~$C$. Let~$D$ be the subcomplex obtained by removing~$C$ from~$X$ (but keeping its faces). The homomorphism
\begin{equation*}
H_n(X,A;\RZ)\to H_n(X,D;\RZ)
\end{equation*}
 is realized, at the level of~$n$-chains, by sending~$\sum \alpha_i\Delta_i$ to~$\alpha_0\Delta_0$. As~$\Delta_0$ is maximal,~$\Delta_0$ is not a boundary. Therefore, the homomorphism is non-trivial if and only if there exists a relative cycle assigning a non-zero coefficient~$\alpha_0$ to~$\Delta_0$.
\end{proof}

\subsubsection{Cycles vs the surjection property}
Let~$(X,A)$ be a compact pair and~$C\subseteq X\setminus A$ be a regular~$n$-cell ($n\geq 1$). One has natural isomorphisms
\begin{align*}
H_n(X,X\setminus \op{C};\RZ)&\cong H_n(C,\bd{C};\RZ)&\text{(by excision)}\\
&\cong H_n(C/\bd{C},s;\RZ)\\
&\cong H_n(\Sp_n,s;\RZ)\\
&\cong \RZ.
\end{align*}

Therefore, up to isomorphism, the canonical homomorphism from Definition \ref{def_cycle}
\begin{equation*}
H_n(X,A;\RZ)\to H_n(X,X\setminus \op{C};\RZ)
\end{equation*}
is nothing else than the homomorphism induced by the quotient map~$q_C:(X,A)\to (\Sp_n,s)$ on homology groups. It has the following immediate consequences:
\begin{itemize}
\item If~$C$ belongs to a relative cycle, then~$q_C:(X,A)\to (\Sp_n,s)$ is not null-homotopic,
\item When~$\dim(X)=n$ and~$X,A$ are ANRs,~$C$ belongs to a relative cycle if and only if~$q_C:(X,A)\to (\Sp_n,s)$ is not null-homotopic, by Hopf's classification theorem (Theorem \ref{thm_hopf_classification}).
\end{itemize}

Therefore, we obtain partial generalizations of Theorem \ref{thm_graph} to higher-dimensional spaces. The first one is only an implication.

\begin{corollary}[Cycles vs surjection property, implication]\label{cor_implication}
Let~$(X,A)$ be an almost Euclidean compact pair. If every regular cell~$C\subseteq X\setminus A$ belongs to a relative cycle, then~$\C{X,A}$ has the surjection property.
\end{corollary}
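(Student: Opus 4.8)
The plan is to reduce the surjection property to a statement about quotient maps via Theorem~\ref{thm_surj_prop_cone}, and then to invoke the homological reformulation recorded just above the statement. Since $(X,A)$ is an almost Euclidean compact pair, Theorem~\ref{thm_surj_prop_cone} asserts that $\C{X,A}$ has the surjection property precisely when, for every $n\geq 1$ and every regular $n$-cell $C\subseteq X\setminus A$, the quotient map $q_C:(X,A)\to(\Sp_n,s)$ is not null-homotopic. It therefore suffices to verify this non-null-homotopy condition for every regular cell, under the hypothesis that each such cell belongs to a relative cycle.

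Next I would fix an arbitrary regular $n$-cell $C\subseteq X\setminus A$ and appeal to the chain of excision and quotient isomorphisms preceding the statement, which identifies the canonical homomorphism
\begin{equation*}
H_n(X,A;\RZ)\to H_n(X,X\setminus\op{C};\RZ)
\end{equation*}
with the map induced by $q_C$ on $\RZ$-homology. Saying that $C$ belongs to a relative cycle is, by Definition~\ref{def_cycle}, exactly the assertion that this homomorphism is non-trivial, so the map induced by $q_C$ on homology is non-trivial as well. Because a null-homotopic map of pairs induces the zero homomorphism on homology, $q_C$ cannot be null-homotopic.

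As $C$ was arbitrary, condition~2 of Theorem~\ref{thm_surj_prop_cone} holds, and the theorem yields the surjection property for $\C{X,A}$. I expect no real obstacle here: all the substantive work---namely the identification of the canonical homomorphism with the one induced by $q_C$, and consequently the implication ``$C$ belongs to a relative cycle $\Rightarrow q_C$ is not null-homotopic''---has already been carried out in the discussion preceding the corollary, so the argument is a direct assembly of Theorem~\ref{thm_surj_prop_cone} with that observation. The only point deserving a word of care is that ``null-homotopic'' for $q_C$ is meant as a map of pairs, which is precisely the notion for which the induced map on relative homology vanishes.
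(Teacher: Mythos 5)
Your proposal is correct and follows exactly the route the paper intends: the corollary is stated as an immediate consequence of Theorem~\ref{thm_surj_prop_cone} together with the identification (via the excision isomorphisms given just before the statement) of the canonical homomorphism of Definition~\ref{def_cycle} with the map induced by $q_C$ on $\RZ$-homology, so that non-triviality of that homomorphism rules out a null-homotopy of $q_C$. Your closing remark that null-homotopy is meant in the sense of pairs, which is what forces the induced map on relative homology to vanish, is the right point to flag.
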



This result becomes an equivalence when the space ``has the same dimension everywhere'', giving a first extension of Theorem \ref{thm_graph}, from graphs to \emph{pure} simplicial complexes, i.e.~simplicial complexes~$X$ whose maximal simplices all have the same dimension.

\begin{corollary}[Cycles vs surjection property, equivalence]\label{cor_equivalence}
Let~$n\geq 1$. Let~$(X,A)$ be an almost~$n$-Euclidean compact pair, where~$X$ and~$A$ are ANRs and~$\dim(X)=n$. The following statements are equivalent:
\begin{itemize}
\item The pair~$\C{X,A}$ has the surjection property,
\item Every regular cell~$C\subseteq X\setminus A$ belongs to a relative cycle.
\end{itemize}
\end{corollary}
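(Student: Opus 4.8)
The plan is to read off the equivalence from Theorem~\ref{thm_surj_prop_cone} combined with the two consequences recorded immediately before the statement. By Theorem~\ref{thm_surj_prop_cone} (applicable since almost $n$-Euclidean implies almost Euclidean), $\C{X,A}$ has the surjection property precisely when, for every $m\geq 1$ and every regular $m$-cell $C\subseteq X\setminus A$, the quotient map $q_C:(X,A)\to(\Sp_m,s)$ is not null-homotopic. The task is therefore to convert the homotopy-theoretic condition ``$q_C$ is not null-homotopic'' into the homological condition ``$C$ belongs to a relative cycle''. One direction of this conversion is always available: if $C$ belongs to a relative cycle then $q_C$ is not null-homotopic (first consequence), which is exactly Corollary~\ref{cor_implication}. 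The content of the present statement is the converse, and this is where the dimension and ANR hypotheses enter.

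The first step I would carry out is the observation that, under the hypotheses, \emph{every} regular cell $C\subseteq X\setminus A$ is a regular $n$-cell. Indeed $\op{C}$ is a non-empty open subset of $X$ contained in $X\setminus A$, and since the $n$-Euclidean points are dense in $X\setminus A$ it must contain an $n$-Euclidean point $x$. If $C$ were an $m$-cell then $x$ would also be $m$-Euclidean; but a point cannot be both $m$-Euclidean and $n$-Euclidean with $m\neq n$, by invariance of domain, so $m=n$. This matches, for every cell at once, the dimension of the target sphere with $\dim(X)=n$, which is precisely what lets Hopf's classification theorem apply uniformly.

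With all cells known to be $n$-cells, I would invoke the second consequence recorded before the corollary: because $\dim(X)=n$ and $X,A$ are ANRs, for each regular $n$-cell $C$ the map $q_C$ is not null-homotopic if and only if $C$ belongs to a relative cycle. This is where Theorem~\ref{thm_hopf_classification} is used, through the identification of the canonical homomorphism $H_n(X,A;\RZ)\to H_n(X,X\setminus\op{C};\RZ)$ with the map induced by $q_C$ on homology. Substituting this pointwise equivalence into the characterization from Theorem~\ref{thm_surj_prop_cone} yields the chain: $\C{X,A}$ has the surjection property $\iff$ no regular cell $C$ makes $q_C$ null-homotopic $\iff$ every regular cell $C\subseteq X\setminus A$ belongs to a relative cycle.

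The main obstacle is conceptual rather than computational, and it is exactly the reason the one-sided Corollary~\ref{cor_implication} upgrades to an equivalence here: one must guarantee that the only sphere occurring as a target is $\Sp_n$, so that the hypothesis $\dim(X)=n$ matches the target dimension and the full strength of Hopf's classification theorem, not merely the homology-to-homotopy implication, can be used in both directions. Once the cells are pinned to dimension $n$, the remainder is a direct assembly of the cited results with no calculation required.
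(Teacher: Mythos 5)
Your proof is correct and follows the same route the paper intends: Theorem~\ref{thm_surj_prop_cone} plus the Hopf-classification equivalence between ``$q_C$ not null-homotopic'' and ``$C$ belongs to a relative cycle'' recorded just before the corollary. The one step you make explicit that the paper leaves implicit --- that the almost $n$-Euclidean hypothesis forces every regular cell in $X\setminus A$ to be an $n$-cell, so the target sphere's dimension always matches $\dim(X)$ --- is exactly the point of that hypothesis, and your justification of it is sound.
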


Theorem \ref{thm_retraction_quotient} also becomes an equivalence when the dimension of the space matches the dimension of the cell, or the cell has dimension at most~$2$. 
\begin{corollary}[Cycles vs retraction]\label{cor_retraction}
Let~$(X,A)$ be a compact pair, where~$X$ and~$A$ are ANRs and~$\dim(X)=n\geq 1$. For a regular~$n$-cell~$C\subseteq X\setminus A$, the following statements are equivalent:
\begin{itemize}
\item $C$ does not belong to a relative cycle,
\item There exists a retraction~$r:X\setminus \op{C}\to\bd{C}$ which is constant on~$A$.
\end{itemize}

The equivalence holds if~$n=1$ or~$2$, without any dimension assumption about~$X$.
\end{corollary}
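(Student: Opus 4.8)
The plan is to prove the two implications separately, isolating exactly where the dimension hypothesis is used, and orienting the argument as ``retraction exists $\Rightarrow$ $C$ is not a cycle'' (free of hypotheses) and ``$C$ is not a cycle $\Rightarrow$ retraction exists'' (the main point). For the first implication, no assumption on $\dim X$ is needed: if a retraction $r\colon X\setminus\op{C}\to\bd{C}$ constant on $A$ exists, then Theorem~\ref{thm_retraction_quotient} shows that $q_C\colon(X,A)\to(\Sp_n,s)$ is null-homotopic, whereas if $C$ belonged to a relative cycle then $q_C$ would not be null-homotopic (this is the first consequence drawn from the identification of the canonical homomorphism of Definition~\ref{def_cycle} with the one induced by $q_C$ on homology). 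Hence $C$ does not belong to a relative cycle.

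For the converse I would first recast the target object. Writing $D=X\setminus\op{C}$, a retraction $r\colon D\to\bd{C}$ constant on $A$ is exactly a continuous extension to $D$ of the map $\phi\colon A\cup\bd{C}\to\bd{C}\cong\Sp_{n-1}$ which is constant on $A$ and the identity on $\bd{C}$ (the two sets being disjoint since $C\subseteq X\setminus A$). Here I would record that $D$ is a compact ANR: it is a retract of the open set $X\setminus\{c\}$, where $c$ is the image of the cell's centre, obtained by pushing $C\setminus\{c\}$ radially onto $\bd{C}$, and since an open subset of an ANR is an ANR and a retract of an ANR is an ANR, $D$ and hence the pair $(D,A\cup\bd{C})$ is a compact ANR pair, to which obstruction theory applies.

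Next I would run obstruction theory for extending $\phi$ into $\Sp_{n-1}$. As $\Sp_{n-1}$ is $(n-2)$-connected, $\phi$ extends over the $(n-1)$-skeleton and the only a priori nonzero obstruction is the primary one, $\delta(\phi^*\iota_{n-1})\in H^n(D,A\cup\bd{C};\Z)$, where $\iota_{n-1}$ generates $H^{n-1}(\Sp_{n-1};\Z)$. The higher obstructions lie in $H^{k+1}(D,A\cup\bd{C};\pi_k(\Sp_{n-1}))$ for $k\geq n$, and this is precisely where the hypotheses enter: they all vanish either because $\dim D\leq\dim X=n$ kills these relative groups, or because for $n\in\{1,2\}$ the sphere $\Sp_{n-1}$ is aspherical, $\pi_k(\Sp_0)=\pi_k(\Sp_1)=0$ for $k\geq n$, so no higher obstruction exists for any $\dim D$. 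Hopf's extension theorem (Section~\ref{sec_hopf}) then yields that $\phi$ extends if and only if $\phi^*\iota_{n-1}$ lifts through the restriction $H^{n-1}(D;\Z)\to H^{n-1}(A\cup\bd{C};\Z)$.

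Finally I would identify this lifting condition with the cycle condition, which is the main obstacle and amounts to careful bookkeeping among three relative groups. Since $A$ and $\bd{C}$ are disjoint, $\phi^*\iota_{n-1}$ is $0$ on $A$ and the generator $\iota$ on $\bd{C}$; by exactness of the sequence of $(D,A)$ a lift vanishing on $A$ is the same as a class in the image of $H^{n-1}(D,A)\to H^{n-1}(D)$ restricting to $\iota$ on $\bd{C}$. The connecting map of the triple $(X,D,A)$ factors as $H^{n-1}(D,A)\to H^{n-1}(D)\xrightarrow{\delta_{(X,D)}}H^{n}(X,D)$, and by naturality of $\delta$ along $(C,\bd{C})\hookrightarrow(X,D)$, together with the excision isomorphism $H^{n}(X,D)\cong H^{n}(C,\bd{C})$ and the connecting isomorphism $H^{n-1}(\bd{C})\cong H^{n}(C,\bd{C})$ (valid since $C$ is contractible), such a class exists if and only if the generator $u$ of $H^{n}(X,D)\cong\Z$ lies in the image of the triple connecting map, i.e.\ if and only if $u$ maps to $0$ in $H^{n}(X,A)$. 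By the cohomological reformulation of Definition~\ref{def_cycle} (the Remark, applicable as $X$ and $A$ are ANRs), this says exactly that $C$ does not belong to a relative cycle, closing the equivalence. The delicate point throughout is that the dimension or asphericity hypothesis is what guarantees the primary obstruction is the \emph{only} one; the low-degree case $n=1$ (reduced $H^0$, $\bd{C}=\Sp_0$) is routine and matches the graph picture of Example~\ref{ex_graph}.
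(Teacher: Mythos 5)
Your argument is correct and, at its core, it is the paper's argument: both directions reduce the retraction to an extension problem for (a map that restricts to) the identity of $\bd{C}\cong\Sp_{n-1}$ over the complement of the open cell, and both rely on the fact that under the stated hypotheses only the primary obstruction can be non-zero, i.e.\ on Hopf's extension theorem. The execution differs in the bookkeeping. The paper first passes to the quotient $Y=X/A$, so that ``retraction constant on $A$'' becomes an absolute retraction $Y\setminus\op{C}\to\bd{C}$; Hopf's criterion $\ker i_*\subseteq\ker f_*$ with $f=\id_{\bd{C}}$ then collapses to the injectivity of $i_*$ on $\widetilde H_{n-1}(\cdot;\RZ)$, which is read off from the long exact sequence of the single pair $(Y,Y\setminus\op{C})$ plus excision. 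You keep the pair, work in integral cohomology, and chase the triple $(X,D,A)$ before translating back through the homology/cohomology duality remark; this is a faithful dual of the same computation, somewhat longer but no less valid, and it has the small merit of making explicit where each hypothesis kills a higher obstruction group. Two points deserve care in your version. First, for $n=1$ the target $\Sp_0$ is disconnected, so the constant value chosen on $A$ is not innocuous (extendability can depend on which of the two points of $\bd{C}$ you pick, cf.\ Example~\ref{ex_graph}) and classical obstruction theory does not apply to a non-connected target; you rightly defer this case to a separate elementary argument, but the existential quantification over the constant should be stated -- the paper's formulation via $X/A$ quantifies over it automatically. Second, your obstruction-theoretic derivation implicitly requires a CW structure (up to homotopy) on the compact ANR pair $(D,A\cup\bd{C})$, which is standard but is precisely what citing the Hurewicz--Wallman form of Hopf's theorem for compact ANR pairs, as the paper does, lets one avoid spelling out.
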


\begin{proof}
We assume that~$C$ does not belong to a relative cycle and build a retraction (the other implication is Theorem \ref{thm_retraction_quotient}). Let~$Y=X/A$. The existence of a retraction which is constant on~$A$ is equivalent to the existence of a retraction~$r:Y\to\bd{C}$. Let~$f:\bd{C}\to\bd{C}$ be the identity and~$i:\bd{C}\to Y\setminus \op{C}$ be the inclusion map. Note that~$\bd{C}\cong \Sp_{n-1}$ and let~$f_*,i_*$ be the induced homomorphisms on reduced homology groups~$\widetilde{H}_{n-1}(\cdot;\RZ)$. We want to find a retraction, i.e.~an extension of~$f$. As~$\dim(X)=n$, we can apply Hopf's extension theorem (Theorem \ref{thm_hopf_extension}):~$f$ has an extension if and only if~$\ker i_*\subseteq \ker f_*$. As~$f_*$ is injective,~$f$ has an extension if and only if~$i_*$ is injective.

In the exact sequence (with the coefficient group~$\RZ$)
\begin{equation}\label{eq10}
H_n(Y)\to H_n(Y,Y\setminus \op{C})\to \widetilde{H}_{n-1}(Y\setminus \op{C})
\end{equation}
the first homomorphism is trivial by assumption (note that~$H_n(X/A)\cong H_n(X,A)$ as~$n\geq 1$), so the second homomorphism is injective. The latter is~$i_*$ up to isomorphism, because one has natural isomorphisms
\begin{equation*}
H_n(Y,Y\setminus \op{C})\cong H_n(C,\bd{C})\cong \widetilde{H}_{n-1}(\bd{C}).
\end{equation*}
As a result,~$i_*$ is injective. It implies that~$f$ has an extension, which is the sought retraction.

The particular case~$n=1$ or~$2$ corresponds to the particular case in Hopf's extension theorem.
\end{proof}

It also implies that Theorem \ref{thm_graph} extends from graphs to simplicial complexes of dimension at most~$3$ that are not necessarily pure.
\begin{corollary}\label{cor_dim3}
Let~$(X,A)$ be an almost Euclidean compact pair, where~$X$ and~$A$ are ANRs and~$\dim(X)\leq 3$. The following statements are equivalent:
\begin{itemize}
\item The pair~$\C{X,A}$ has the surjection property,
\item Every regular cell~$C\subseteq X\setminus A$ belongs to a relative cycle.
\end{itemize}
\end{corollary}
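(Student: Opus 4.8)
The plan is to deduce this statement directly from the results already established, separating the two implications and handling the top-dimensional cells apart from the lower-dimensional ones. One implication is essentially free: the assumption that every regular cell $C\subseteq X\setminus A$ belongs to a relative cycle yields that $\C{X,A}$ has the surjection property by Corollary \ref{cor_implication}, which holds for any almost Euclidean compact pair with no restriction on the dimension. So the work lies entirely in the converse, which I would prove in contrapositive form: assuming some regular cell does not belong to a relative cycle, I would exhibit a retraction and invoke Theorem \ref{thm_retraction_quotient} to conclude that the surjection property fails.

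Concretely, suppose a regular $n$-cell $C\subseteq X\setminus A$ does not belong to a relative cycle. Since $\op{C}$ is an $n$-dimensional open subset of $X$ and $\dim(X)\leq 3$, necessarily $n\in\{1,2,3\}$. The goal is to produce a retraction $r:X\setminus\op{C}\to\bd{C}$ that is constant on $A$, for then Theorem \ref{thm_retraction_quotient} immediately gives that the associated quotient map is null-homotopic, hence that $\C{X,A}$ does not have the surjection property. The source of such a retraction is Corollary \ref{cor_retraction}, whose hypotheses I would verify dimension by dimension.

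For $n=1$ or $n=2$, Corollary \ref{cor_retraction} applies in its special-case form, which requires no dimension assumption on $X$, and directly converts the failure of $C$ to belong to a relative cycle into the desired retraction. The only case needing care is $n=3$, where Corollary \ref{cor_retraction} requires the matching condition $\dim(X)=n$. But this is automatic here: the existence of the open $3$-cell $\op{C}$ forces $\dim(X)\geq 3$, and combined with the standing hypothesis $\dim(X)\leq 3$ this gives $\dim(X)=3=n$, so the general case of Corollary \ref{cor_retraction} applies. In all three cases we obtain the retraction, and the proof closes as above.

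The only genuinely delicate point is this dimension bookkeeping for the top cell: everything hinges on the observation that a regular cell of dimension equal to $\dim(X)$ is exactly the situation in which Corollary \ref{cor_retraction} is an equivalence without the $n\leq 2$ restriction, together with the fact that a $3$-cell can exist only when $\dim(X)=3$. Since the dimensions $1,2,3$ exhaust the possibilities under $\dim(X)\leq 3$, and each is covered either by the low-dimensional special case or by the matching-dimension case, no cell escapes the argument. I do not expect any computational obstacle; the substance is the correct matching of the mixed-dimension cells of an almost Euclidean (as opposed to almost $n$-Euclidean) pair to the appropriate clause of Corollary \ref{cor_retraction}.
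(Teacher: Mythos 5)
Your proposal is correct and follows essentially the same route as the paper: the easy direction via Corollary \ref{cor_implication}, and the converse by observing that under $\dim(X)\leq 3$ every regular $n$-cell has $n=1$, $n=2$, or $n=\dim(X)$, so Corollary \ref{cor_retraction} yields the retraction and Theorem \ref{thm_retraction_quotient} kills the surjection property. Your explicit dimension bookkeeping for the $n=3$ case just spells out what the paper states in one line.
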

\begin{proof}
For every regular~$n$-cell~$C$, one has~$n=\dim(X)$ or~$n=1$ or~$n=2$ so we can apply Corollary \ref{cor_retraction}. If~$C$ does not belong to a relative cycle, then there exists a retraction~$r:X\setminus \op{C}\to\bd{C}$, so the quotient map~$q_C:(X,A)\to (\Sp_n,s)$ is null-homotopic (Theorem \ref{thm_retraction_quotient}).
\end{proof}

We will see in Section \ref{sec_counter_examples} that the dimension assumption in Corollaries \ref{cor_equivalence}, \ref{cor_retraction} and \ref{cor_dim3} cannot be dropped.

We also obtain a relationship between cycles and the~$\epsilon$-surjection property for ANRs.
\begin{theorem}[Cycles vs $\epsilon$-surjection property]\label{thm_cycle_epsilon}
Let~$(X,A)$ be an almost Euclidean compact pair, where~$X$ is an ANR. If every regular cell~$C\subseteq X\setminus A$ belongs to a relative cycle, then~$(X,A)$ has the~$\epsilon$-surjection property for some~$\epsilon>0$. 
\end{theorem}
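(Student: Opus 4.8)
The plan is to prove the statement directly, producing a single threshold $\epsilon_0>0$ that depends only on the ANR structure of $X$ (not on the map under consideration), and showing that any non-surjective $f$ with $d(f,\id_X)<\epsilon_0$ and $f|_A=\id_A$ forces some regular cell $C\subseteq X\setminus A$ to fail to belong to a relative cycle, contradicting the hypothesis. The central device, which is exactly what resolves the hard part (the uniformity of $\epsilon_0$), is a homotopy from $f$ to $\id_X$ that is automatically rel $A$. Since $X$ is an ANR, I embed $X$ in the Hilbert cube $Q$ and fix a retraction $r\colon U\to X$ of an open neighborhood $U\supseteq X$. I then choose $\epsilon_0>0$ with $\Ne(X,\epsilon_0)\subseteq U$, so that for any $f$ with $d(f,\id_X)<\epsilon_0$ the straight-line-then-retract formula $H_t(x)=r((1-t)f(x)+t\,x)$ is well defined (the segment from $f(x)$ to $x$ stays within $\epsilon_0$ of $X$, hence in $U$) and is a homotopy from $f$ to $\id_X$. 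When moreover $f|_A=\id_A$, the segment collapses to the point $x$ for every $x\in A$ and $r$ fixes $X$, so $H_t|_A=\id_A$ for all $t$; thus $H_t$ is a homotopy of pairs and $f\simeq\id_X$ as maps of pairs $(X,A)\to(X,A)$.

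Next, assuming for contradiction that such an $f$ is not surjective, I locate a convenient missed cell. The set $X\setminus\im(f)$ is open, nonempty, and contained in $X\setminus A$ (since $f|_A=\id_A$ gives $A\subseteq\im(f)$); as $(X,A)$ is almost Euclidean, it contains a Euclidean point $x_0$, which therefore lies in $\op{C}$ for some regular $n$-cell $C$ with $n\geq1$, and shrinking $C$ I may take $C\subseteq X\setminus A$. I then consider the quotient map $q_C\colon(X,A)\to(\Sp_n,s)$ and put $y_0=q_C(x_0)\neq s$. Because $q_C$ is injective on $\op{C}$ and collapses $X\setminus\op{C}$ to $s$, the point $x_0$ is the unique preimage of $y_0$; hence $q_C(f(x))=y_0$ would give $f(x)=x_0\in\im(f)$, impossible. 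So $q_C\circ f$ avoids $y_0$ and factors as $(X,A)\to(\Sp_n\setminus\{y_0\},s)\hookrightarrow(\Sp_n,s)$.

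Passing to homology with coefficients in $\RZ$, the factorization gives $(q_C\circ f)_*=0$ on $H_n(X,A;\RZ)$, since $\Sp_n\setminus\{y_0\}\cong\R^n$ is contractible and $H_n(\R^n,\mathrm{pt};\RZ)=0$. On the other hand $f$ is a map of pairs homotopic to $\id_X$ as such via $H_t$, so $f_*=\id$ on $H_n(X,A;\RZ)$; therefore $(q_C)_*=(q_C)_*\circ f_*=(q_C\circ f)_*=0$. As recalled in the discussion following Definition \ref{def_cycle}, under the natural isomorphism $H_n(X,X\setminus\op{C};\RZ)\cong H_n(\Sp_n,s;\RZ)$ the canonical homomorphism of that definition is precisely $(q_C)_*$; hence $(q_C)_*=0$ says that $C$ does not belong to a relative cycle, contradicting the hypothesis. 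This forces $f$ to be surjective, so $(X,A)$ has the $\epsilon_0$-surjection property.

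The only genuinely delicate point, and the one I expect to be the main obstacle, is the uniformity of $\epsilon_0$. A homotopy from $f$ to $\id_X$ furnished by the bare ANR property need not respect $A$, and forcing its track on $A$ to avoid $\op{C}$ through smallness would make the admissible $\epsilon$ depend on $d(A,C)$, hence on $f$ and on the missed cell; this destroys uniformity, especially when the missed point can approach $A$. The linear-then-retract homotopy avoids this entirely: it is rel $A$ by construction, so one fixed $\epsilon_0$, extracted solely from the neighborhood retraction $r\colon U\to X$, works simultaneously for every admissible $f$ and every cell $C$.
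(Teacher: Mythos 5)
Your proof is correct and follows essentially the same route as the paper: a uniformly small $\epsilon$ forces $f$ to be homotopic rel $A$ to the identity (your explicit straight-line-then-retract homotopy is the standard proof of the fact the paper simply invokes for compact ANRs), and the relative cycle hypothesis is then contradicted homologically. The only cosmetic difference is the last step: the paper factors $f$ through $(X\setminus\op{C},A)$ and uses exactness of $H_n(X\setminus \op{C},A)\to H_n(X,A)\to H_n(X,X\setminus\op{C})$, whereas you factor $q_C\circ f$ through the contractible punctured sphere — the two arguments are equivalent via the identification of the canonical homomorphism with $(q_C)_*$ that the paper records before Corollary~\ref{cor_implication}.
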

\begin{proof}
As~$X$ is a compact ANR, if~$\epsilon$ is sufficiently small, then every function~$f:X\to X$ satisfying~$f|_A=\id_A$ and~$d(f,\id_X)<\epsilon$ is homotopic to~$\id_X$, via a homotopy~$h_t:X\to X$ such that~$h_t|_A=\id_A$. We claim that~$(X,A)$ has the~$\epsilon$-surjection property. Assume otherwise, let~$f:X\to X$ be non-surjective continuous, satisfy~$f|_A=\id_A$ and~$d(f,\id_X)<\epsilon$. $f$ is homotopic to~$\id_X:(X,A)\to (X,A)$.

Let~$C\subseteq X\setminus A$ be a regular~$n$-cell which is disjoint from~$\im f$,~$i:(X\setminus \op{C},A)\to (X,A)$ the inclusion map and let~$\widetilde{f}:(X,A)\to (X\setminus \op{C},A)$ be such that~$f=i\circ \widetilde{f}$.

As~$f=i\circ \widetilde{f}$ is homotopic to~$\id_X$,~$i_*\circ \widetilde{f}_*$ is an isomorphism on homology groups of~$(X,A)$, so~$i_*:H_n(X\setminus \op{C},A)\to H_n(X,A)$ is surjective (coefficients are implicitly in~$\RZ$). In the exact sequence
\begin{equation*}
H_n(X\setminus \op{C},A)\stackrel{i_*}{\longrightarrow} H_n(X,A)\longrightarrow H_n(X,X\setminus \op{C})
\end{equation*}
the second homomorphism is therefore trivial, contradicting the assumption that~$C$ belongs to a relative cycle. Therefore, there is no such function~$f$, so~$(X,A)$ has the~$\epsilon$-surjection property.
\end{proof}

The converse implication in Theorem \ref{thm_cycle_epsilon} does not hold, even under dimension assumptions., as the next example shows.
\begin{example}[Bing's house]
Let~$X$ be Bing's house, or the house with two rooms, which is a pure~$2$-dimensional simplicial complex, and~$A=\emptyset$. It was  proved in \cite{AH22} that it has the~$\epsilon$-surjection property, by using the fact that the star of each vertex in~$X$ is the cone of a graph which consists of a union of cycles. However,~$X$ is contractible, so no~$2$-cell belongs to a cycle.
\end{example}

\section{Counter-examples}\label{sec_counter_examples}
We consider a family of spaces which enables us to find counter-examples, and show that certain assumptions in the previous results cannot be dropped.

\subsection{A family of spaces}

Let~$m,n$ be two natural numbers and let~$f:\Sp_m\to\Sp_n$ be continuous. One can attach~$\BB_{m+1}$ to~$\BB_{n+1}$ along their boundaries using~$f$. We obtain the space~$X_f=\BB_{n+1}\cup_f\BB_{m+1}$, which is the quotient of~$\BB_{m+1}\sqcup\BB_{n+1}$ obtained by identifying~$x\in\Sp_{m+1}=\partial\BB_{m+1}$ to~$f(x)\in\Sp_n=\partial\BB_n$. It is illustrated in Figure \ref{fig_Xf}.
\begin{figure}[h]
\centering
\includegraphics{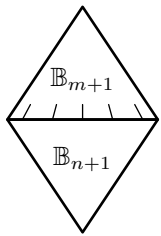}
\caption{The space~$X_f$. The ball~$\BB_{n+1}$ is visualized as the cone~$\C{\Sp_n}$, the center of the ball is the tip of the cone.}\label{fig_Xf}
\end{figure}

We will see that the properties of~$X_f$ we are interested in only depend on the homotopy class of~$f$. Therefore, by the Simplicial Approximation Theorem (Theorem 2C.1 in \cite{Hatcher02}), we can always assume that~$f$ is a simplicial map, implying that~$X_f$ is a finite simplicial complex.

We will need to consider the suspension construct. We recall that the suspension of a space~$X$ is~$\sus X=X*\Sp_0$. One has~$\sus \Sp_n=\Sp_{n+1}$. To a map~$f:X\to Y$ is associated its suspension~$\sus f:\sus X\to \sus Y$, which is naturally defined from~$f$. Note that for~$f:\Sp_m\to\Sp_n$, one has~$\sus f:\Sp_{m+1}\to\Sp_{n+1}$.

\subsection{Cycles vs the surjection property}
We show that the equivalences stated in Corollaries \ref{cor_equivalence} and \ref{cor_retraction} are no more valid when the assumptions about the dimension of the space and the dimension of the regular cells are dropped. For clarity, let us summarize what we have so far.

For any compact pair~$(X,A)$, if~$C\subseteq X\setminus A$ is a regular~$n$-cell, then we have the following implications~$1.\Rightarrow 2. \Rightarrow 3.$:
\begin{enumerate}
\item There is a retraction~$r:X\setminus \op{C}\to\bd{C}$ which is constant on~$A$,
\item The quotient map~$q_C:(X,A)\to (\Sp_n,s)$ is null-homotopic,
\item $C$ does not belong to a relative cycle.
\end{enumerate}

When~$(X,A)$ is a finite simplicial pair and~$\dim(X)=n$, all these conditions are equivalent by Corollaries \ref{cor_equivalence} and \ref{cor_retraction}. However we show that if~$\dim(X)>n$, then both implications~$2.\Rightarrow 1.$ and~$3.\Rightarrow 2.$ fail in general (still assuming that~$(X,A)$ is a finite simplicial pair).


Note that~$\BB_{n+1}$ is a regular~$(n+1)$-cell in~$X_f$.  We relate the conditions 1., 2.~and 3.~for the space~$X_f$ and the regular~$(n+1)$-cell~$C=\BB_{n+1}$ in terms of the properties of~$f$.

\begin{theorem}\label{thm_Xf}
Let~$m, n\in\N$ and~$f:\Sp_m\to\Sp_n$ be continuous. The space~$X_f=\BB_{n+1}\cup_f\BB_{m+1}$ enjoys the following properties:
\begin{enumerate}[label=(\alph*)]
\item There exists a retraction~$r:\Sp_n\cup_f\BB_{m+1}\to \Sp_n$ iff~$f$ is null-homotopic,
\item The quotient map~$q:X_f\to \Sp_{n+1}$ is null-homotopic iff~$\sus f$ is null-homotopic,
\item If~$m\neq n$, then~$\BB_{n+1}$ does not belong to a cycle in~$X_f$.
\end{enumerate}

Moreover,~$(\C{X_f},X_f)$ has the surjection property iff~$\sus f$ is not null-homotopic.
\end{theorem}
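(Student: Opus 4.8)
The plan is to combine Theorem~\ref{thm_surj_prop_cone} with part~(b) by checking that only two homotopy classes of quotient maps occur for~$X_f$, governed by the two balls~$\BB_{n+1}$ and~$\BB_{m+1}$.

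We may assume~$f$ is simplicial, so that~$X_f$ is a finite simplicial complex, hence almost Euclidean. Applying Theorem~\ref{thm_surj_prop_cone} to the pair~$(X_f,\emptyset)$, the cone~$(\C{X_f},X_f)$ has the surjection property if and only if no regular cell~$C\subseteq X_f$ has~$q_C$ null-homotopic. The two balls are regular cells, with open cells~$\op{\BB_{n+1}}=\int{\BB_{n+1}}$ and~$\op{\BB_{m+1}}=\int{\BB_{m+1}}$. I would first reduce the verification to these two cells: up to composition with a degree~$-1$ self-map of the sphere (which does not affect null-homotopy), the class of~$q_C$ depends only on the connected component of~$\op{C}$ in the set of Euclidean points of~$X_f$, since nested regular cells of equal dimension have homotopic quotient maps (the technique used in the proof of Theorem~\ref{thm: infinite union}), and one chains this along a path inside a Euclidean chart. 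As the Euclidean points of~$X_f$ are contained in~$\int{\BB_{n+1}}\cup\int{\BB_{m+1}}$, every~$q_C$ is, up to sign, homotopic to~$q_{\BB_{n+1}}$ or to~$q_{\BB_{m+1}}$.

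I would then treat the two distinguished maps separately. The map~$q_{\BB_{m+1}}$ collapses~$X_f\setminus\op{\BB_{m+1}}=\BB_{n+1}$, so it is the quotient~$X_f\to X_f/\BB_{n+1}$; since~$\BB_{n+1}$ is contractible and the pair has the homotopy extension property, this is a homotopy equivalence, and as~$X_f/\BB_{n+1}\cong\Sp_{m+1}$ is not contractible,~$q_{\BB_{m+1}}$ is never null-homotopic. The map~$q_{\BB_{n+1}}$ collapses~$X_f\setminus\op{\BB_{n+1}}=\Sp_n\cup_f\BB_{m+1}$, hence is exactly the map~$q$ of part (b), which is null-homotopic if and only if~$\sus f$ is. Combining, the surjection property holds iff both maps are non-null-homotopic, iff~$q_{\BB_{n+1}}$ is (the other always being so), iff~$\sus f$ is not null-homotopic.

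The step I expect to demand the most care is the cell-reduction. It is transparent when~$m\neq n$, because then the~$(n+1)$- and~$(m+1)$-Euclidean points cannot meet on the gluing sphere and form the two disjoint charts~$\int{\BB_{n+1}}$ and~$\int{\BB_{m+1}}$. When~$m=n$ the gluing sphere~$\Sp_n$ may itself carry Euclidean points, which could connect the two charts; but a point of~$\Sp_n$ is Euclidean in~$X_f$ only where~$f$ is a local homeomorphism with a single preimage, forcing~$\deg f=\pm1$ and hence~$\sus f$ non-null-homotopic. In that situation all quotient maps become homotopic, up to sign, to the homotopy equivalence~$q_{\BB_{m+1}}$, so the equivalence in the statement is preserved.
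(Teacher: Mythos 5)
Your proposal addresses only the final ``Moreover'' assertion, and even there it takes part (b) as an input rather than proving it. Parts (a) and (c) are not touched at all, and part (b) --- which is the real content of the theorem --- is invoked verbatim (``which is null-homotopic if and only if $\sus f$ is'') without justification. The paper proves (b) by constructing a map $k:X_f\to\Sp_{n+1}$ sending $\BB_{n+1}$ homeomorphically onto the lower hemisphere and $\BB_{m+1}$ onto the upper hemisphere via $\C{f}:\BB_{m+1}\to\BB_{n+1}$, and then factoring both $q$ and $\sus f\circ p$ (where $p:X_f\to X_f/\BB_{n+1}\cong\Sp_{m+1}$ is a homotopy equivalence) through $k$ as $q=q_U\circ k$ and $\sus f\circ p=q_L\circ k$ with $q_U\simeq q_L$; none of this appears in your write-up, and without it the equivalence you rely on is unproven. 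Part (a) requires the observation that retractions $\Sp_n\cup_f\BB_{m+1}\to\Sp_n$ correspond exactly to continuous extensions of $f$ to $\BB_{m+1}$, i.e.\ to null-homotopies of $f$; part (c) requires $H_{n+1}(X_f;G)\cong H_{n+1}(\Sp_{m+1};G)=0$ when $m\neq n$. Both are short, but they are part of the statement and are absent.

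The piece you do carry out --- deducing the ``Moreover'' claim from (b) via Theorem \ref{thm_surj_prop_cone} --- is correct and essentially coincides with the paper's own deduction: both arguments note that $q_{\BB_{m+1}}$ is the quotient by the contractible subspace $\BB_{n+1}$, hence a homotopy equivalence and never null-homotopic, so the surjection property of $(\C{X_f},X_f)$ hinges entirely on whether $q_{\BB_{n+1}}=q$ is null-homotopic. Your reduction of an arbitrary regular cell to one of the two distinguished cells (shrinking into a Euclidean chart and comparing nested cells, as in the proof of Theorem \ref{thm: infinite union}) is actually spelled out more carefully than in the paper, which leaves that step implicit; the discussion of the $m=n$ case is a reasonable, if slightly informal, way to handle the only situation where the two charts could communicate. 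But as a proof of Theorem \ref{thm_Xf} the proposal has a genuine gap: three of the four claims, including the one doing all the work, are missing.
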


\begin{proof}
(a) Note that~$\Sp_n\cup_f\BB_{m+1}$ is the quotient of~$\BB_{m+1}$ by the equivalence relation~$x\sim y$ iff~$x=y$, or~$x,y\in\Sp_m$ and~$f(x)=f(y)$. Therefore, there is a one-to-one correspondence between the continuous functions~$r:\Sp_n\cup_f\BB_{m+1}\to\Sp_n$ and the continuous functions~$F:\BB_{m+1}\to\Sp_n$ that respect~$\sim$. A function~$r:\Sp_n\cup_f\BB_{m+1}\to\Sp_n$ is a retraction iff the corresponding function~$F:\BB_{m+1}\to\Sp_n$ is an extension of~$f$, i.e.~is a null-homotopy of~$f$.

(b) As~$\BB_{n+1}$ is contractible, the quotient map~$p:X_f\to X_f/\BB_{n+1}\cong\Sp_{m+1}$ is a homotopy equivalence. We show that the homotopy class of~$q:X_f\to\Sp_{n+1}$ is sent to the homotopy class of~$\sus f:\Sp_{m+1}\to\Sp_{n+1}$ under the equivalence, which implies that~$q$ is null-homotopic if and only if~$\sus f$ is. More precisely, we show that~$q\sim\sus f \circ p$ (these two maps are illustrated on Figures \ref{fig_q} and \ref{fig_sigmaf_p}).

\begin{figure}[h]
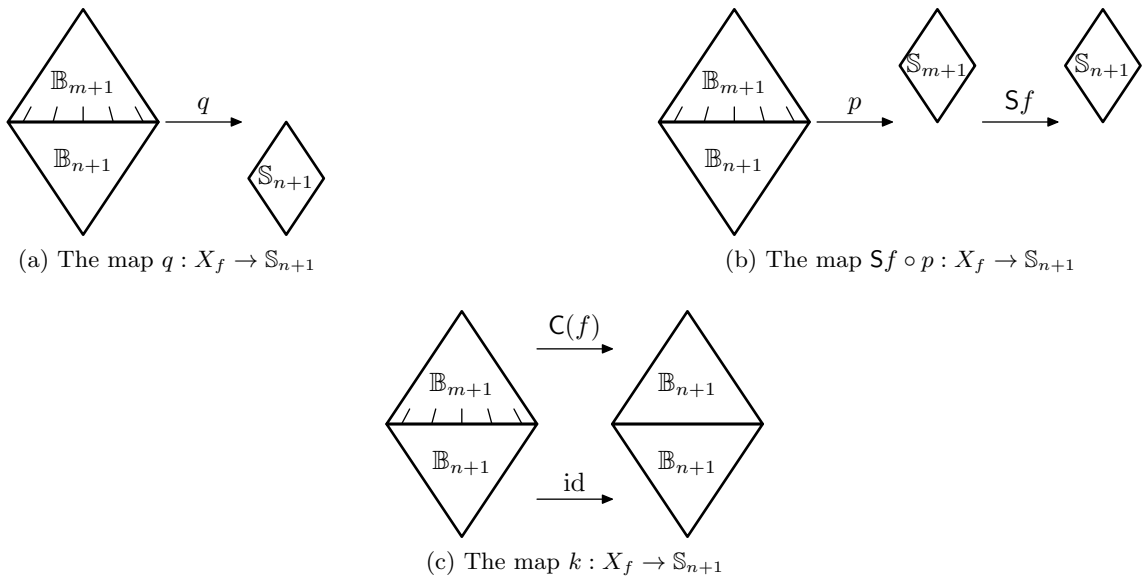

\centering
\subfloat[The map $q:X_f\to\Sp_{n+1}$]{\includegraphics{space-2}\label{fig_q}}
\hspace{\stretch{1}}
\subfloat[The map $\sus f\circ p:X_f\to\Sp_{n+1}$]{\includegraphics{space-1}\label{fig_sigmaf_p}}
\hspace{\stretch{1}}
\subfloat[The map $k:X_f\to\Sp_{n+1}$]{\includegraphics{space-0}\label{fig_k}}
\caption{Illustration of the proof of (b) in Theorem \ref{thm_Xf}}
\end{figure}

The sphere~$\Sp_{n+1}$ is a union of two balls~$\BB_{n+1}$, called the lower and upper hemispheres, joined at the equator. Let~$j:\BB_{n+1}\sqcup\BB_{m+1}\to\Sp_{n+1}$ send~$\BB_{n+1}$ homeomorphically to the lower hemisphere of~$\Sp_{n+1}$, and send~$\BB_{m+1}$ to the upper hemisphere of~$\Sp_{n+1}$ by applying the map~$\C{f}:\C{\Sp_m}\to\C{\Sp_n}$, i.e.~$\C{f}:\BB_{m+1}\to\BB_{n+1}$. The space~$X_f$ is a quotient of~$\BB_{m+1}\sqcup\BB_{n+1}$ and the map~$j$ respects that quotient: if~$x,y\in\Sp_m=\partial \BB_{m+1}$ and~$z\in\Sp_n=\partial \BB_{n+1}$ are such that~$f(x)=f(y)=z$, then~$j(x)=j(y)=j(z)$ is~$z$, seen as a point of the equator of~$\Sp_{n+1}$.

Therefore,~$j$ induces a continuous function~$k:X_f\to\Sp_{n+1}$, illustrated on Figure \ref{fig_k}. The functions~$q$ and~$\sus f\circ p$ can be factored using~$k$ as follows. Let~$U$ and~$L$ be the upper and lower halves of~$\Sp_{n+1}$. The quotient of~$\Sp_{n+1}$ by~$U$ or~$L$ is again~$\Sp_{n+1}$. Let~$q_U,q_L:\Sp_{n+1}\to\Sp_{n+1}$ be the corresponding quotient maps. One has~$q=q_U\circ k$ and~$\sus f\circ p=q_L\circ k$. One easily sees that~$q_U\sim q_L$, so~$q\sim\sus f\circ p$.


(c) As~$\BB_{n+1}$ is contractible,~$X_f$ is homotopy equivalent to~$X_f/\BB_{n+1}$, which is homeomorphic to~$\Sp_{m+1}$. Therefore,~$H_{n+1}(X_f;G)\cong H_{n+1}(\Sp_{m+1};G)$ which is trivial as~$m\neq n$. As a result, any homomorphism defined on~$H_{n+1}(X_f;G)$ is trivial.

We prove the last assertion. For a regular~$(m+1)$-cell~$C\subseteq X_f$, the corresponding quotient map~$q_C:X_f\to\Sp_{m+1}$ is never null-homotopic, because it is a homotopy equivalence. Therefore,~$(\C{X_f},X_f)$ enjoys the surjection property if and only if~$\sus f$ is not null-homotopic.
\end{proof}


The first application is that in Corollary \ref{cor_equivalence}, one cannot drop the assumption that all the cells have the same dimension as the space.
\begin{example}
Let~$h:\Sp_3\to\Sp_2$ be the Hopf map. It is known that~$h$ and~$\sus h$ are not null-homotopic (Corollary 4J.4 in \cite{Hatcher02}). Therefore, in~$X_h$ the quotient map is not null-homotopic and~$(\C{X_h},X_h)$ enjoys the surjection property although no regular cell belongs to a cycle, so the implication~$3.\Rightarrow 2.$ fails for~$X_h$.
\end{example}

A second application is that the dimension assumption in Corollary \ref{cor_retraction} is needed.

\begin{example}
Let again~$h:\Sp_3\to\Sp_2$ be the Hopf map. The function~$2h$ (which is the concatenation of~$h$ with itself using the homotopy group operation) is not null-homotopic, however its suspension~$\sus 2h$ is (again by Corollary 4J.4 in \cite{Hatcher02}). Therefore, in~$X_{2h}$ the quotient is null-homotopic and~$(\C{X_{2h}},X_{2h})$ does not satisfy the surjection property, but there is no retraction, so the implication $2.\Rightarrow 1.$ fails for~$X_{2h}$.
\end{example}

\subsection{Product}
We have shown with Proposition \ref{prop_product} that for finite simplicial pairs, if~$(X,A)\times (Y,B)$ has the~$\epsilon$-surjection property for some~$\epsilon>0$, then both~$(X,A)$ and~$(Y,B)$ satisfy the~$\delta$-surjection property for some~$\delta>0$.

We prove that the converse implication does not hold.
\begin{theorem}\label{thm_product_counterexample}
There exists a finite simplicial pair~$(X,A)$ that has the surjection property, but the product~$(X,A)\times (\BB_1,\Sp_0)$ does not have the~$\epsilon$-surjection property for any~$\epsilon>0$.

There exists a finite simplicial complex~$Y$ that has the~$\epsilon$-surjection property for some~$\epsilon>0$, but the product~$Y\times \Sp_1$ does not have the~$\delta$-surjection property for any~$\delta>0$.
\end{theorem}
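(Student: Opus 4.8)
The plan is to construct the examples directly from the family $X_f = \BB_{n+1} \cup_f \BB_{m+1}$, exploiting the dichotomy exposed in Theorem \ref{thm_Xf}: the cone pair $(\C{X_f}, X_f)$ has the surjection property iff $\sus f$ is not null-homotopic, whereas a phenomenon one dimension down (the map $f$ itself, or the space $X_f$ without taking a further cone/suspension) can behave differently. The key leverage is exactly the instability of the suspension homomorphism on homotopy groups of spheres, the same ingredient already used in the preceding examples.

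\paragraph{First statement.}
For the pair $(X,A) \times (\BB_1,\Sp_0)$, I would first identify the relevant space. Since $\BB_1 = \C{\Sp_0}$ is a cone and the product of a pair with $(\BB_1,\Sp_0) = \OC{\Sp_0}$-type data relates to the cone/suspension operators (cf.\ Proposition \ref{prop_loc_con_product} and the join description $(L,N)*(\Sp_0)$), the product $(X,A)\times(\BB_1,\Sp_0)$ should be arranged so that the relevant local cone pair is $\C{X,A}*(\Sp_0)$, i.e.\ a suspension. Concretely I would take $(X,A) = (X_f, \emptyset)$ or a suitable cone pair whose surjection property is governed by $f$, while the surjection property of the product is governed by $\sus f$. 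The strategy is then: pick $f = 2h$ (with $h:\Sp_3\to\Sp_2$ the Hopf map), so that $f$ is not null-homotopic but $\sus f$ is. Using Theorem \ref{thm_local_surjection} to reduce the $\epsilon$-surjection property of the product to the surjection property of its local cone pairs, and Theorem \ref{thm_Xf} (or its analogue for the product's local cones), the factor $(X,A)$ has the surjection property because it is controlled by the non-null-homotopic $f$, while the product fails because the corresponding suspended map $\sus f$ becomes null-homotopic. I would verify via Proposition \ref{prop_cone_epsilon} that for the relevant cone pair, the $\epsilon$-surjection and surjection properties coincide, and via Theorem \ref{thm_local_surjection} that the product's failure at a single local cone pair propagates to failure of the $\epsilon$-surjection property for every $\epsilon$.

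\paragraph{Second statement.}
For $Y \times \Sp_1$, the idea is parallel but now the space $Y$ itself (rather than its cone) should have the $\epsilon$-surjection property, and multiplying by $\Sp_1$ should suspend the controlling map into the null-homotopic regime. Here I would use the reduction of the $\epsilon$-surjection property to surjection of local cones (Theorem \ref{thm_local_surjection}): each star in $Y$ is a cone $\C{L}$, and $Y$ having the $\epsilon$-surjection property amounts to each such cone pair having the surjection property, governed by maps at the sphere level. Taking products with $\Sp_1$ corresponds, at the level of local cones, to a join with $\Sp_1 = \sus\Sp_0 = \Sp_0 * \Sp_0$, hence a double suspension of the controlling data; choosing the map so that it survives one suspension but dies under the next yields a $Y$ with the $\epsilon$-surjection property whose product $Y\times\Sp_1$ fails it. Again I would invoke that the failure is witnessed by a single local cone pair, so by Theorem \ref{thm_local_surjection} no $\delta>0$ works.

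\paragraph{Main obstacle.}
The hard part will be pinning down precisely which map $f$ and which dimensions $m,n$ make the suspension behavior work out for \emph{both} statements simultaneously, and verifying that the local cone pairs of the product spaces are exactly the suspensions/joins predicted by Proposition \ref{prop_loc_con_product}, so that Theorem \ref{thm_local_surjection} and Theorem \ref{thm_Xf} combine cleanly. In particular one must ensure that the factor $(X,A)$ (resp.\ $Y$) genuinely has the surjection (resp.\ $\epsilon$-surjection) property and is a bona fide finite simplicial pair, which requires replacing $f$ by a simplicial approximation and checking that none of the other regular cells in the product space accidentally fails to belong to a relative cycle or spoils the property. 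The delicate bookkeeping lies in translating the homotopy-theoretic statement ``$f$ not null-homotopic but $\sus f$ null-homotopic'' (available from $2h$ and the Hopf map via Corollary 4J.4 in \cite{Hatcher02}) into the surjection property for the correct cone pairs appearing as local models of the product.
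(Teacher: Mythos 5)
Your overall strategy is the same as the paper's: use the family $X_f$, identify the product with $(\BB_1,\Sp_0)$ (resp.\ with $\Sp_1$) as a suspension at the level of the relevant cone pairs via the join/cone identities, and let the instability of the suspension homomorphism produce the counterexample. However, there is a genuine gap in the concrete choice of $f$, and it is precisely at the crux of the argument. By Theorem~\ref{thm_Xf}, the surjection property of the cone pair $(\C{X_f},X_f)$ is governed by $\sus f$, not by $f$: the pair has the surjection property iff $\sus f$ is \emph{not} null-homotopic. Its product with $(\BB_1,\Sp_0)=(\C{\Sp_0},\Sp_0)$ is $(\C{X_{\sus f}},X_{\sus f})$, governed by $\sus^2 f$. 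So the counterexample requires a map $f$ with $\sus f$ not null-homotopic but $\sus^2 f$ null-homotopic. Your choice $f=2h$ with $h:\Sp_3\to\Sp_2$ the Hopf map is off by one suspension: $2h$ is not null-homotopic but $\sus 2h$ already is, so $(\C{X_{2h}},X_{2h})$ \emph{fails} the surjection property and cannot serve as $(X,A)$. (Your vaguer fallback, a pair ``whose surjection property is governed by $f$ itself,'' is not supported by any result in the paper; taking $(X,A)=(X_f,\emptyset)$ certainly does not work, since a pair with empty $A$ and more than one point never has the surjection property.) Note also the internal inconsistency: for the second statement you correctly ask for a map that ``survives one suspension but dies under the next,'' yet $2h$ dies at the first.

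The missing ingredient is therefore the existence of $f:\Sp_m\to\Sp_n$ with $\sus f\neq 0$ and $\sus^2 f=0$, which is strictly deeper than what Corollary~4J.4 of \cite{Hatcher02} provides. The paper obtains such $f$ for $(m,n)=(7,3)$ from Toda's computation (a map with $\Sigma f=[\nu_4,i_4]\neq 0$, whose further suspension vanishes because suspensions of Whitehead products vanish), or for $(m,n)=(8,4)$ from the surjectivity of $\Sigma:\pi_8(\Sp_4)\to\pi_9(\Sp_5)\cong\Z_2$ together with $\pi_{10}(\Sp_6)=0$. Once such an $f$ is fixed, the rest of your outline goes through exactly as in the paper: $(X,A)=(\C{X_f},X_f)$ has the surjection property, $(X,A)\times(\BB_1,\Sp_0)\cong(\C{X_{\sus f}},X_{\sus f})$ does not, and Proposition~\ref{prop_cone_epsilon} upgrades this to failure of the $\epsilon$-surjection property for every $\epsilon$; for the second statement, $Y=\sus X_f$ is covered by two open cones $\OC{X_f}$ and $Y\times\Sp_1$ by four open cones $\OC{X_{\sus f}}$, so Theorem~\ref{thm_local_surjection} gives the conclusion.
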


\begin{proof}
We choose~$m,n\in\N$ and~$f:\Sp_m\to\Sp_n$ so that~$\sus f$ is not-null-homotopic but~$\sus^2 f$ is (we will give detailed references in Section \ref{sec_suspension} below on the existence of such functions). We then let~$(X,A)=(\C{X_f},X_f)$ and~$Y=\sus X_f$.

\begin{lemma}
$\sus X_f$ is homeomorphic to~$X_{\sus f}$.
\end{lemma}
\begin{proof}
Both spaces are the quotients of~$(\BB_{n+1}\sqcup\BB_{m+1})\times [0,1]$ by the equivalence relation generated by:
\begin{align}
(y,t)\sim (f(y),t)&\text{ for }y\in\Sp_m,\label{eq1}\\
(x,0)\sim (x',0)\text{ and }(x,1)\sim (x',1)&\text{ for }x,x'\in\BB_{n+1},\label{eq2}\\
(y,0)\sim(y',0)\text{ and }(y,1)\sim(y',1)&\text{ for }y,y'\in\BB_{m+1}.\label{eq3}
\end{align}

Taking the quotient by \eqref{eq1} first yields~$X_f\times [0,1]$, and then the quotient by \eqref{eq2}, \eqref{eq3} yields~$\sus X_f$. Taking the quotient by \eqref{eq2}, \eqref{eq3} first yields~$\sus\BB_{n+1}\sqcup\sus \BB_{m+1}$ and then the quotient by \eqref{eq1} yields~$X_{\sus f}$.
\end{proof}

Therefore,
\begin{align*}
(\C{X_f},X_f)\times (\C{\Sp_0},\Sp_0)&\cong (\C{X_f*\Sp_0},X_f*\Sp_0)\\
&\cong(\C{\sus X_f},\sus X_f)\\
&\cong(\C{X_{\sus f}},X_{\sus f}).
\end{align*}
If~$\sus f$ is not null-homotopic but~$\sus^2 f$ is, then~$(\C{X_f},X_f)$ enjoys the surjection property but~$(\C{X_{\sus f}},X_{\sus f})$ does not. For cone pairs, the surjection property is equivalent to the~$\epsilon$-surjection property for any~$\epsilon>0$ by Proposition \ref{prop_cone_epsilon}.

The space~$Y=\sus X_f$ is finitely conical with two open cones~$\OC{X_f}$, so~$Y$ has the~$\epsilon$-surjection property for some~$\epsilon>0$. The product~$Y\times\Sp_1=\sus X_f\times \sus \Sp_0$ is finitely conical with four open cones~$\OC{X_{\sus f}}$, so~$Y\times \Sp_1$ does not satisfy the~$\epsilon$-surjection property for any~$\epsilon>0$. In both case we use Theorem \ref{thm_local_surjection}.
\end{proof}

\subsubsection{Iteration of the suspension}\label{sec_suspension}
The existence of the function~$f$ in the proof of Theorem \ref{thm_product_counterexample} can be extracted from the literature on homotopy groups of spheres and the suspension homomorphism, as we explain now.
\begin{theorem}
For~$(m,n)=(7,3)$ or~$(8,4)$, there exists~$f:\Sp_m\to\Sp_n$ such that~$\sus f$ is not null-homotopic but~$\sus^2 f$ is.
\end{theorem}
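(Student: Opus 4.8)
The statement asks for an explicit $f:\Sp_m\to\Sp_n$ with $(m,n)=(7,3)$ or $(8,4)$ such that $\sus f$ is nontrivial in $\pi_{m+1}(\Sp_{n+1})$ but $\sus^2 f$ is trivial in $\pi_{m+2}(\Sp_{n+2})$. My plan is to locate such an element directly in the known tables of homotopy groups of spheres, track its image under the suspension homomorphism, and verify that the first suspension survives while the second dies.

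\medskip

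The plan is to argue via the stable/unstable comparison. For a map $\Sp_m\to\Sp_n$, the suspension homomorphism $\sus:\pi_m(\Sp_n)\to\pi_{m+1}(\Sp_{n+1})$ is an isomorphism once we are in the stable range $m<2n-1$ (Freudenthal), so the interesting behavior happens just below that range. First I would recall the relevant groups. For $(m,n)=(7,3)$ the chain to inspect is $\pi_7(\Sp_3)\to\pi_8(\Sp_4)\to\pi_9(\Sp_5)\to\cdots$; for $(m,n)=(8,4)$ the chain is $\pi_8(\Sp_4)\to\pi_9(\Sp_5)\to\cdots$. The standard references (Toda's tables, or Hatcher \S4) give $\pi_8(\Sp_4)\cong\Z/2\oplus\Z/2$, with one $\Z/2$ summand generated by the suspension $\sus\nu'$ of an unstable element $\nu'\in\pi_7(\Sp_4)$-type class (via the EHP/Hopf construction) and the other by a class detected by the Whitehead product, while $\pi_9(\Sp_5)\cong\Z/2$ lies in the stable range. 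Concretely, I would select the element $\nu'\in\pi_6(\Sp_3)$ or the appropriate representative so that its relevant suspension lands in the $\Z/2$ summand of $\pi_8(\Sp_4)$ that is killed by one further suspension. The key input is that one of the two $\Z/2$ summands of $\pi_8(\Sp_4)$ maps to $0$ under $\sus:\pi_8(\Sp_4)\to\pi_9(\Sp_5)$, since $\pi_9(\Sp_5)$ is a single $\Z/2$ and the suspension is not injective here. Picking $\sus f$ to be the nontrivial element of the kernel of that suspension yields exactly $\sus f\neq 0$ and $\sus^2 f=0$.

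\medskip

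Assembling the two cases: for $(m,n)=(8,4)$ I take $f$ to be a map whose single suspension $\sus f$ generates the kernel of $\sus:\pi_8(\Sp_4)\to\pi_9(\Sp_5)$, which is nonzero because $\sus$ fails to be injective on $\pi_8(\Sp_4)$ (the source has order $4$ while the target has order $2$, and the excess is precisely the unstable contribution). For $(m,n)=(7,3)$ I desuspend once more: since $\sus:\pi_7(\Sp_3)\to\pi_8(\Sp_4)$ is surjective onto the relevant summand, I can choose $f\in\pi_7(\Sp_3)$ with $\sus f$ equal to the element just constructed, so again $\sus^2 f=0$ while $\sus f\neq 0$. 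In both cases the needed map exists by the surjectivity of the first suspension and the non-injectivity of the second.

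\medskip

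The main obstacle I anticipate is bookkeeping the precise group structure and the action of the suspension homomorphism, rather than any conceptual difficulty: the facts are entirely classical but scattered across Toda's and Hatcher's computations, and one must be careful that the chosen generator is in the kernel of the \emph{second} suspension but not the \emph{first}. The cleanest route is simply to cite the explicit values $\pi_7(\Sp_3)$, $\pi_8(\Sp_4)$, $\pi_9(\Sp_5)$ together with the EHP sequence (or the statement that $\sus:\pi_8(\Sp_4)\to\pi_9(\Sp_5)$ has nontrivial kernel), from which the existence of $f$ is immediate; verifying the exact order of each group and the rank of each suspension map is the only computation required.
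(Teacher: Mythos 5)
Your overall strategy---reading the answer off the known groups $\pi_7(\Sp_3)\cong\Z_2$, $\pi_8(\Sp_4)\cong\Z_2^2$, $\pi_9(\Sp_5)\cong\Z_2$ and the suspension homomorphism---is the same as the paper's, but both cases as written have genuine problems. For $(m,n)=(8,4)$ your prescription is backwards: here $f\in\pi_8(\Sp_4)$ and $\sus f\in\pi_9(\Sp_5)$, so ``$\sus f$ generates the kernel of $\sus:\pi_8(\Sp_4)\to\pi_9(\Sp_5)$'' does not typecheck, and if you meant to take $f$ itself in that kernel you would get $\sus f=0$, the opposite of what is required. What you need is $f$ \emph{outside} the kernel, i.e.\ $\sus f\neq 0$ in $\pi_9(\Sp_5)$, which exists because $\sus:\pi_8(\Sp_4)\to\pi_9(\Sp_5)$ is surjective by Freudenthal ($k=n=4$ even); then $\sus^2 f=0$ because $\pi_{10}(\Sp_6)=0$ --- a group you never invoke. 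Your claim that $\pi_9(\Sp_5)$ ``lies in the stable range'' is false and, taken at face value, would make this case impossible: $9=2\cdot 5-1$ is exactly the edge, the stable $4$-stem is trivial, yet $\pi_9(\Sp_5)\cong\Z_2$.

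For $(m,n)=(7,3)$ the shape of your argument is right --- you need a nonzero element of $\ker\bigl(\sus:\pi_8(\Sp_4)\to\pi_9(\Sp_5)\bigr)$ lying in the image of $\sus:\pi_7(\Sp_3)\to\pi_8(\Sp_4)$ --- but the assertion that this first suspension is ``surjective onto the relevant summand'' is precisely the nontrivial input and does not follow from the group orders, from Freudenthal, or from merely knowing that the second suspension has nontrivial kernel: a priori $\sus$ could carry the generator of $\pi_7(\Sp_3)\cong\Z_2$ to an element that survives the next suspension, in which case no such $f$ would exist. The paper closes exactly this gap by citing Toda's computation that the generator $f$ of $\pi_7(\Sp_3)$ satisfies $\Sigma f=[\nu_4,i_4]\neq 0$, a Whitehead product, together with the classical fact that suspensions of Whitehead products are null-homotopic, whence $\Sigma^2 f=0$. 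You need this computation (or an equivalent EHP-sequence argument) stated explicitly rather than asserted.
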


The results in the literature are usually expressed in terms of the reduced suspension~$\Sigma$ rather than the suspension~$\sus$ (we use the notation~$\Sigma$ as in \cite{Hatcher02}, while the other references use the notation~$E$). We recall that~$\Sigma X$ is defined as the quotient of~$\sus X=X*\Sp_0$ by the segment~$\{x_0\}*\Sp_0$ for some~$x_0\in X$. However, for a CW-complex~$X$,~$\Sigma X$ and~$\sus X$ are homotopy equivalent (Example 0.10 in \cite{Hatcher02}) and if~$X,Y$ are CW-complexes and~$f:X\to Y$ is continuous, then~$\Sigma f$ is null-homotopic if and only if~$\sus X$ is. More precisely, the quotient map~$\varphi_X:\sus X\to\Sigma X$ is a homotopy equivalence and one has~$\Sigma f\circ \varphi_X=\varphi_Y\circ \sus f$.

We use the following homotopy groups of spheres, that can be found in Hatcher \cite{Hatcher02} (\S 4.1, p.~339):
\begin{equation*}
\pi_7(\Sp_3)\cong \Z_2\quad\pi_8(\Sp_4)\cong \Z_2^2\quad\pi_9(\Sp_5)\cong \Z_2\quad\pi_{10}(\Sp_6)\cong 0
\end{equation*}

\paragraph{Whitehead product.}
The Whitehead product is a way of combining an element~$f\in\pi_p(\Sp_n)$ with an element~$g\in\pi_q(\Sp_n)$ into their product~$[f,g]\in\pi_{p+q-1}(\Sp_n)$. The suspension of a Whitehead product is always null-homotopic (see Whitehead \cite{Whitehead46}, Theorem 3.11, p.~470).

\begin{theorem}[Suspension of Whitehead product]
If~$f\in\pi_p(\Sp_n)$ and~$g\in\pi_q(\Sp_n)$, then one has~$\Sigma [f,g]=0$.
\end{theorem}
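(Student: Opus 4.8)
The plan is to peel the two coefficient maps $f,g$ off the front and reduce everything to the \emph{universal} Whitehead product. Recall that, by definition, $[f,g]$ is the composite $\Sp_{p+q-1}\xrightarrow{\;w\;}\Sp_p\vee\Sp_q\xrightarrow{\;f\vee g\;}\Sp_n$, where $w$ is the attaching map of the top cell of $\Sp_p\times\Sp_q$, so that $(\Sp_p\vee\Sp_q)\cup_w e^{p+q}=\Sp_p\times\Sp_q$. Since reduced suspension is a functor, $\Sigma[f,g]=\Sigma(f\vee g)\circ\Sigma w$, so it suffices to prove that $\Sigma w$ is null-homotopic; the specific maps $f$ and $g$ then play no further role.

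First I would construct a left homotopy inverse for the suspended inclusion $j\colon\Sp_p\vee\Sp_q\hookrightarrow\Sp_p\times\Sp_q$. Let $\pi_1,\pi_2$ denote the two projections of the product, and let $\nu\colon\Sigma Z\to\Sigma Z\vee\Sigma Z$ be the pinch comultiplication carried by any suspension. Set $r=(\Sigma\pi_1\vee\Sigma\pi_2)\circ\nu\colon\Sigma(\Sp_p\times\Sp_q)\to\Sigma\Sp_p\vee\Sigma\Sp_q$. On the summand $\Sigma\Sp_p$ of $\Sigma(\Sp_p\vee\Sp_q)=\Sigma\Sp_p\vee\Sigma\Sp_q$ one has $\pi_1\circ j=\id$ and $\pi_2\circ j$ constant, so $r\circ\Sigma j$ restricts there to $(\id\vee\mathrm{const})\circ\nu$, which is homotopic to the inclusion by the counit axiom of the co-$H$-structure on a suspension; the symmetric computation applies on $\Sigma\Sp_q$. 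Hence $r\circ\Sigma j\simeq\id$, i.e.\ $\Sigma j$ is split injective. This is exactly the injectivity part of the classical splitting $\Sigma(\Sp_p\times\Sp_q)\simeq\Sigma\Sp_p\vee\Sigma\Sp_q\vee\Sigma(\Sp_p\wedge\Sp_q)$.

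The conclusion then drops out of the Puppe cofiber sequence of $w$. Because $\Sp_p\times\Sp_q$ is the cofiber of $w$, the maps $\Sp_{p+q}\xrightarrow{\;\Sigma w\;}\Sp_{p+1}\vee\Sp_{q+1}\xrightarrow{\;\Sigma j\;}\Sigma(\Sp_p\times\Sp_q)$ are two consecutive maps of that sequence, so $\Sigma j\circ\Sigma w$ is null-homotopic. Composing on the left with $r$ gives $\Sigma w\simeq r\circ\Sigma j\circ\Sigma w$, and the right-hand side is null-homotopic because $\Sigma j\circ\Sigma w$ is; therefore $\Sigma[f,g]=\Sigma(f\vee g)\circ\Sigma w$ is null-homotopic, i.e.\ $\Sigma[f,g]=0$.

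The one point that needs genuine care is the verification that $r$ is a left homotopy inverse, which rests on the counit identity $(\id\vee\mathrm{const})\circ\nu\simeq\id$ for the pinch map and on the compatibility of $\nu$ with the suspended projections; the remainder is formal manipulation of the cofiber sequence. I would also record that, since all spaces in sight are CW-complexes with nondegenerate basepoints, reduced and unreduced suspension agree up to homotopy, so this statement about $\Sigma$ matches the statements about $\sus$ used elsewhere in the paper.
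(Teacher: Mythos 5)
Your argument is correct, and it is worth noting that the paper does not actually prove this theorem at all: it simply cites Whitehead (1946, Theorem~3.11), whose original argument is a direct geometric one about the suspension coordinates. What you give is the standard modern proof via the splitting of the suspension of a product, and every step checks out: $[f,g]$ factors through the attaching map $w$ of the top cell of $\Sp_p\times\Sp_q$, so it suffices to kill $\Sigma w$; the map $r=(\Sigma\pi_1\vee\Sigma\pi_2)\circ\nu$ is a left homotopy inverse to $\Sigma j$ by the counit identity together with naturality of the pinch map (and the fact that based homotopies on the two wedge summands glue); and then $\Sigma w\simeq r\circ\Sigma j\circ\Sigma w\simeq *$. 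Two small remarks. First, your appeal to the Puppe sequence is slightly more than you need: $j\circ w$ is already null-homotopic \emph{before} suspending, since the characteristic map of the top cell extends $j\circ w$ over $\BB_{p+q}$, so $\Sigma j\circ\Sigma w=\Sigma(j\circ w)\simeq *$ immediately. Second, since $f$ and $g$ both land in $\Sp_n$, the map out of the wedge is really the fold composed with $f\vee g$ (i.e.\ the map restricting to $f$ and to $g$ on the two summands), not $f\vee g$ itself; this is a harmless abuse of notation and does not affect the factorization $\Sigma[f,g]=\Sigma(\cdot)\circ\Sigma w$. Your closing remark reconciling $\Sigma$ with the unreduced suspension $\sus$ matches exactly the discussion the paper gives in Section~\ref{sec_suspension}.
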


The next result was proved by Toda \cite{Toda52} (\S5.ii, p.~79).
\begin{lemma}
There exists~$f:\Sp_7\to\Sp_3$ such that~$\Sigma f$ is not null-homotopic but~$\Sigma^2 f$ is.
\end{lemma}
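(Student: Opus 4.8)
The statement concerns the two suspension homomorphisms $E_1\colon\pi_7(\Sp_3)\to\pi_8(\Sp_4)$ and $E_2\colon\pi_8(\Sp_4)\to\pi_9(\Sp_5)$ in the $4$-stem, whose values are recorded above as $\Z_2$, $\Z_2^2$ and $\Z_2$ (here $E_i$ denotes the suspension $\Sigma$, written as the ``$E$'' of the EHP sequence). Writing $\alpha$ for the generator of $\pi_7(\Sp_3)\cong\Z_2$ and taking $f$ to represent $\alpha$, the lemma is equivalent to the two assertions $\Sigma\alpha=E_1\alpha\neq 0$ and $\Sigma^2\alpha=E_2E_1\alpha=0$. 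The plan is to read off both from the (2-local) James EHP exact sequence
\[
\pi_{k}(\Sp_n)\xrightarrow{\,E\,}\pi_{k+1}(\Sp_{n+1})\xrightarrow{\,H\,}\pi_{k+1}(\Sp_{2n+1})\xrightarrow{\,P\,}\pi_{k-1}(\Sp_n),
\]
which interlocks suspension $E$, the James--Hopf invariant $H$, and the Whitehead-product map $P$, using the vanishing $\Sigma[g,h]=0$ (stated above) to control the second suspension. All groups in play are finite $2$-groups, so working $2$-locally loses nothing.

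First I would instantiate the sequence at $n=3$, where the relevant portion is $\pi_9(\Sp_7)\xrightarrow{P}\pi_7(\Sp_3)\xrightarrow{E}\pi_8(\Sp_4)\xrightarrow{H}\pi_8(\Sp_7)$. Exactness at $\pi_7(\Sp_3)$ shows that $\ker(E_1)=\im\bigl(P\colon\pi_9(\Sp_7)\to\pi_7(\Sp_3)\bigr)$, so $\Sigma\alpha\neq0$ reduces to the vanishing of this Whitehead-product map; exactness at $\pi_8(\Sp_4)$ identifies $\im(E_1)=\ker\bigl(H\colon\pi_8(\Sp_4)\to\pi_8(\Sp_7)\bigr)$ with an order-$2$ subgroup, since $\pi_8(\Sp_7)\cong\Z_2$. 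Instantiating next at $n=4$ gives $\pi_{10}(\Sp_9)\xrightarrow{P}\pi_8(\Sp_4)\xrightarrow{E}\pi_9(\Sp_5)$, whence $\ker(E_2)=\im(P)$ is generated by a Whitehead-product class, namely $[\iota_4,\iota_4]\circ\eta_7$ (the image of the generator of $\pi_{10}(\Sp_9)\cong\Z_2$). Because $\Sigma$ respects composition and $\Sigma[\iota_4,\iota_4]=0$, one gets $\Sigma\bigl([\iota_4,\iota_4]\circ\eta_7\bigr)=\Sigma[\iota_4,\iota_4]\circ\Sigma\eta_7=0$, so every element of $\ker(E_2)$ is annihilated by one further suspension. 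This is precisely the behaviour we want of $\Sigma\alpha$.

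The remaining point, and the main obstacle, is to show that $\Sigma\alpha$ actually lands in $\ker(E_2)$. Granting $\Sigma\alpha\neq0$, both $\im(E_1)$ and $\ker(E_2)$ are order-$2$ subgroups of $\pi_8(\Sp_4)\cong\Z_2^2$, so it suffices to prove $\im(E_1)\subseteq\ker(E_2)$, i.e.\ $\im(E_1)=\ker(E_2)$; through the Hopf-invariant descriptions above this amounts to checking that the James--Hopf invariant $H$ annihilates the Whitehead-product generator $[\iota_4,\iota_4]\circ\eta_7$ of $\ker(E_2)$ (using that $H[\iota_4,\iota_4]=\pm2\iota_7$ and $2\eta_7=0$). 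This equality cannot be forced by the group orders alone, which are equally consistent with $\Sigma\alpha$ surviving to $\Sigma^2\alpha\neq0$ and dying only at the third suspension; settling it requires genuinely evaluating the relevant Whitehead products and James--Hopf invariants in the $4$-stem, which is the heart of Toda's composition-method computation, and is where I would invoke his analysis. Once $\Sigma\alpha$ is identified with a nonzero Whitehead-product class, $\Sigma\alpha\neq0$ and $\Sigma^2\alpha=\Sigma[\iota_4,\iota_4]\circ\Sigma\eta_7=0$ both follow, and $f$ has the required property.
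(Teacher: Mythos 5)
Your argument is correct and takes a genuinely different route from the paper. The paper simply quotes Toda's explicit construction: there is a concrete $f=a_3\circ\nu_6$ with $\Sigma f=[\nu_4,i_4]\neq 0$ (where $\nu_4=\Sigma^2 h_2$), and then $\Sigma^2 f=0$ because suspensions of Whitehead products vanish. You instead derive the existence structurally from the $2$-local EHP sequence, and your derivation is in fact already complete: exactness at $\pi_8(\Sp_4)$ forces $\ker H\neq 0$ (a homomorphism $\Z_2^2\to\Z_2$ cannot be injective), hence $E_1$ is injective and $\Sigma\alpha\neq 0$; exactness at the next stage gives $\ker E_2=\im\bigl(P\colon\pi_{10}(\Sp_9)\to\pi_8(\Sp_4)\bigr)=\langle[\iota_4,\iota_4]\circ\eta_7\rangle$, of order exactly $2$ since $\pi_{10}(\Sp_9)\cong\Z_2$ and $E_2$ cannot be injective; and your parenthetical computation $H([\iota_4,\iota_4]\circ\eta_7)=H[\iota_4,\iota_4]\circ\eta_7=\pm 2\eta_7=0$ (using that $H$ commutes with right composition by suspensions) shows $\ker E_2\subseteq\ker H=\im E_1$, whence equality by counting and $\Sigma^2\alpha=0$. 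So your closing hedge --- that identifying $\im E_1$ with $\ker E_2$ ``requires Toda's analysis'' --- undersells your own proof: the only inputs beyond group orders are the standard facts $H[\iota_4,\iota_4]=\pm 2\iota_7$, $P\circ\Sigma^2=[\iota_4,\iota_4]\circ(-)$, and $H(\beta\circ\Sigma\gamma)=H\beta\circ\Sigma\gamma$, all of which you state or use. Note also that your class coincides with the paper's, since $[\nu_4,i_4]=[\eta_4,\iota_4]=\pm[\iota_4,\iota_4]\circ\eta_7$. What the paper's citation buys is brevity and no reliance on EHP machinery; what your argument buys is independence from Toda's specific construction, at the cost of invoking several standard but nontrivial properties of $H$ and $P$ together with the values $\pi_8(\Sp_7)\cong\pi_{10}(\Sp_9)\cong\Z_2$, which are not in the paper's table of homotopy groups.
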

\begin{proof}
Let~$i_4:\Sp_4\to\Sp_4$ be the identity. Let~$h_2:\Sp_3\to\Sp_2$ be the Hopf map and~$\nu_{4}=\Sigma^2h_2:\Sp_{5}\to\Sp_{4}$. It is proved in \cite{Toda52}, \S 5.ii that there exists~$f:\Sp_7\to\Sp_3$ such that~$\Sigma f=[\nu_4,i_4]\neq 0$, implying that~$\Sigma^2 f=0$. (It is defined as~$f=a_3\circ \nu_6$ for some particular~$a_3:\Sp_6\to\Sp_3$.)
\end{proof}

\paragraph{Suspension homomorphism.}
We will use the fact that the suspension homomorphism~$\Sigma:\pi_8(\Sp_4)\to \pi_9(\Sp_5)$ is surjective, which follows from the famous Freudenthal suspension theorem (see Whitehead \cite{Whitehead46} p.~468 for the statement).
\begin{theorem}[Freudenthal \cite{Freudenthal37}]
The suspension homomorphism
\begin{equation*}
\Sigma:\pi_{n+k}(\Sp_n)\to\pi_{n+k+1}(\Sp_{n+1})
\end{equation*}
is an isomorphism if~$k<n-1$, and is surjective if~$k=n-1$ or~$k=n$ is even.
\end{theorem}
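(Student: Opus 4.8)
The plan is to identify the suspension homomorphism with a map of relative homotopy groups and then apply a homotopy excision theorem. Write~$\Sp_{n+1}=\sus\Sp_n$ as the union of its two hemispheres~$C_+$ and~$C_-$, each a cone on the equatorial copy of~$\Sp_n$, so that~$C_+\cap C_-=\Sp_n$. Both cones are contractible, so the long exact sequences of the pairs give natural isomorphisms~$\pi_i(C_+,\Sp_n)\cong\pi_{i-1}(\Sp_n)$ and~$\pi_i(\Sp_{n+1},C_-)\cong\pi_i(\Sp_{n+1})$. The first step is to check that, under these identifications, the inclusion-induced map~$\pi_i(C_+,\Sp_n)\to\pi_i(\Sp_{n+1},C_-)$ coincides with the suspension homomorphism~$\Sigma\colon\pi_{i-1}(\Sp_n)\to\pi_i(\Sp_{n+1})$; this is a diagram chase relating the boundary maps of the two pairs to the geometric definition of~$\Sigma$ as coning in the join direction.

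Next I would record the connectivity data. Since each~$C_\pm$ is contractible and~$\Sp_n$ is~$(n-1)$-connected, each pair~$(C_\pm,\Sp_n)$ is~$n$-connected. The Blakers--Massey homotopy excision theorem, applied to~$\Sp_{n+1}=C_+\cup C_-$ with~$C_+\cap C_-=\Sp_n$, then asserts that~$\pi_i(C_+,\Sp_n)\to\pi_i(\Sp_{n+1},C_-)$ is an isomorphism for~$i<2n$ and an epimorphism for~$i=2n$. Substituting~$i=n+k+1$, the condition~$i<2n$ becomes~$k<n-1$ and~$i=2n$ becomes~$k=n-1$, which gives the asserted isomorphism range together with the first surjectivity case.

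The remaining case, surjectivity for~$k=n$ with~$n$ even, corresponds to~$i=2n+1$, one degree beyond the excision range, and needs a finer input. Here I would pass to the James model~$\Omega\Sp_{n+1}\simeq J(\Sp_n)$, under which~$\Sigma$ becomes the map induced on~$\pi_{2n}$ by the bottom-cell inclusion~$\Sp_n=J_1\hookrightarrow J(\Sp_n)$. The James filtration attaches cells only in dimensions~$2n,3n,\dots$, with~$J_2=\Sp_n\cup_\phi e^{2n}$ whose attaching map is the Whitehead square~$\phi=[\iota_n,\iota_n]$, and with~$(J(\Sp_n),J_2)$ being~$(3n-1)$-connected. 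Hence~$\pi_{2n}(J_2)\to\pi_{2n}(J(\Sp_n))\cong\pi_{2n+1}(\Sp_{n+1})$ is an isomorphism for~$n\geq 2$, and from the long exact sequence of~$(J_2,\Sp_n)$ --- in which~$\pi_{2n}(J_2,\Sp_n)\cong\pi_{2n}(\Sp_{2n})\cong\Z$ maps to~$\pi_{2n-1}(\Sp_n)$ by sending a generator to~$[\iota_n,\iota_n]$ --- the inclusion~$\pi_{2n}(\Sp_n)\to\pi_{2n}(J_2)$ is surjective exactly when this boundary is injective, i.e.~when~$[\iota_n,\iota_n]$ has infinite order. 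Since the Hopf invariant of~$[\iota_n,\iota_n]$ equals~$1+(-1)^n$, this order is infinite precisely when~$n$ is even, yielding the final assertion.

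The main obstacle is the homotopy excision theorem (Blakers--Massey), a genuinely deep input; granting it, the identification of~$\Sigma$ with the excision map and the numerology are routine. The subtler point is the edge case~$k=n$, which escapes the excision range and forces one to track the attaching map of the first nontrivial James cell and to know the order of the Whitehead square~$[\iota_n,\iota_n]$.
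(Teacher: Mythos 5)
The paper does not prove this statement at all: it is quoted as a classical theorem of Freudenthal, with pointers to the original literature, and is used as a black box. So there is no internal proof to compare against, and your outline has to be judged on its own. It is the standard modern argument and it is correct. The first two cases are exactly the homotopy-excision proof (Hatcher, Corollary 4.24): writing $\Sp_{n+1}=C_+\cup C_-$ with $C_+\cap C_-=\Sp_n$, identifying $\Sigma$ with the map $\pi_i(C_+,\Sp_n)\to\pi_i(\Sp_{n+1},C_-)$ via the contractibility of the two cones, and feeding the $n$-connectivity of both pairs into Blakers--Massey; the substitution $i=n+k+1$ gives precisely the ranges $k<n-1$ (isomorphism) and $k=n-1$ (epimorphism). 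The case $k=n$ even does lie one degree outside the excision range, and your James-filtration argument is the right way in: it is exactness of the EHP sequence at $\pi_{2n+1}(\Sp_{n+1})$ in disguise, and the chain of reductions --- $\pi_{2n}(J_2)\cong\pi_{2n}(J(\Sp_n))\cong\pi_{2n+1}(\Sp_{n+1})$ for $n\geq 2$, then surjectivity of $\pi_{2n}(\Sp_n)\to\pi_{2n}(J_2)$ being equivalent to injectivity of the boundary $\pi_{2n}(J_2,\Sp_n)\cong\Z\to\pi_{2n-1}(\Sp_n)$ sending a generator to $[\iota_n,\iota_n]$, which holds because the Hopf invariant $1+(-1)^n$ is nonzero for $n$ even --- is sound. (The restriction $n\geq 2$ is harmless since $k=n$ even with $n=0$ is degenerate.) You are right to flag that the real content sits in the imported theorems: Blakers--Massey, the weak equivalence $J(\Sp_n)\simeq\Omega\Sp_{n+1}$, the identification of the attaching map of the $2n$-cell of $J_2$ with the Whitehead square, and the Hopf-invariant computation; granting those, your gluing of them is complete and the numerology checks out.
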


\begin{lemma}
There exists~$f:\Sp_8\to\Sp_4$ such that~$\Sigma f$ is not null-homotopic but~$\Sigma^2 f$ is.
\end{lemma}
\begin{proof}

As~$\pi_9(\Sp_5)\cong \Z_2$ is non-trivial, let~$g\in\pi_9(\Sp_5)$ be non-zero. As~$\Sigma:\pi_8(\Sp_4)\to \pi_9(\Sp_5)$ is surjective, let~$f\in\pi_8(\Sp_4)$ be such that~$\Sigma f=g$. As~$\pi_{10}(\Sp_6)$ is trivial,~$\Sigma^2 f=0$.

Alternatively, Whitehead \cite{Whitehead50} (\S 9, p.~234) shows that there exists~$\gamma\in\pi_8(\Sp_4)$ such that~$\Sigma\gamma=\pm[i_5,i_5]\neq 0$, where~$i_5:\Sp_5\to\Sp_5$ is the identity, therefore~$\Sigma^2\gamma=0$. (It is defined as~$\gamma=J(\gamma_3)$ for some~$\gamma_3$ in \cite{Whitehead50}).
\end{proof}

\section{Applications to computable type}\label{sec_computable_type}

Computability theory provides notions of computability for subsets of~$\N$:
\begin{itemize}
\item A set~$A\subseteq\N$ is \textbf{computable} if there exists a program, or a Turing machine, that on input~$n\in\N$ decides in finite time whether~$n\in A$,
\item A set~$A\subseteq\N$ is \textbf{computably enumerable (c.e.)} if there exists a program or a Turing machine, that on input~$n\in\N$, halts if and only if~$n\in A$.
\end{itemize}
The halting set~$H\subseteq\N$, which is the set of indices of Turing machines that halt, is a famous example of a c.e.~set that is not computable, discovered by Turing in his seminal article \cite{Turing36}.

These notions immediately extend to subsets of countable sets which admit a computable bijection with~$\N$, such as the set of rational numbers, or the set~$\N^*$ of finite sequences of natural unmbers.

Such notions can then be used to define computability notions for other mathematical objects. We recall the notions of computable and semicomputable compact subsets of the Hilbert cube.

We say that an element of the Hilbert cube~$Q=[0,1]^\N$ is \textbf{finitary} if it is a sequence of rational numbers which is eventually~$0$. By standard arguments, there is a computable enumeration~$(B_i)_{i\in\N}$ of the metric balls centered at finitary elements, with positive rational radii.


\begin{definition}
A compact set~$K\subseteq Q$ is \textbf{semicomputable} if the set
\begin{equation*}
\{(i_1,\ldots,i_n)\in\N^*:K\subseteq B_{i_1}\cup \ldots\cup B_{i_n}\}
\end{equation*}
is computably enumerable.
\end{definition}

\begin{definition}
A compact set~$K\subseteq Q$ is \textbf{computable} if it is semicomputable and the set
\begin{equation*}
\{i\in\N:K\cap B_i\neq \emptyset\}
\end{equation*}
is computably enumerable.
\end{definition}

There exist computable sets that are not computable, by a simple encoding of the halting set~$H$. The simplest example is the interval~$[0,r]$ (embedded in~$Q$ as~$[0,r]\times Q$), where~$r=1-\sum_{i\in H}2^{-i-1}$. This example is an interval, but similar constructions can be made with disks,  and more generally~$n$-dimensional balls. It is a striking result that no such construction is possible with spheres. This result is due to Miller \cite{Miller02}, and later motivated the introduction of the notion of computable type by Iljazovi\'c.

Say that a pair~$(Y,B)$ is homeomorphic to, or is a copy of the pair~$(X,A)$ if there exists a homeomorphism~$f:X\to Y$ such that~$f(A)=B$.
\begin{definition}
A compact metrizable space~$X$ has \textbf{computable type} if for every copy~$Y\subseteq Q$ of~$X$, if~$Y$ is semicomputable  then~$Y$ is computable.

A compact pair~$(X,A)$ has \textbf{computable type} if for every copy~$(Y,B)$ of~$(X,A)$ in the Hilbert cube~$Q$, if~$Y$ and~$B$ are semicomputable, then~$Y$ is computable.
\end{definition}

The first examples of spaces and pairs having computable type were given by Miller \cite{Miller02}, who proved that spheres~$\Sp_n$ and balls with their bounding spheres~$(\BB_{n+1},\Sp_n)$ have computable type. Iljazovi\'c \cite{Iljazovic13} proved more generally that closed manifolds~$M$ and compact manifolds with boundary~$(M,\partial M)$ have computable type. Many more examples were studied in \cite{2009chainable_and_circularly_chainable,BurnikI14,2018manifolds,IP18,CickovicIV19,2020graphs,2020pseudocubes,CI21}. For instance, the Warsaw circle and the Warsaw disk with its bounding Warsaw circle have computable type \cite{2009chainable_and_circularly_chainable,IP18}.

In \cite{AH22}, we have studied the case of finite simplicial complexes and proved that a finite simplicial pair~$(X,A)$ has computable type if and only if it has the~$\epsilon$-surjection property for some~$\epsilon>0$. This result implies for instance that Bing's house has computable type, while the dunce hat does not. The proof of the result more generally works on compact ANRs that are covered by finitely many regular cells.

The results obtained in the present article have immediate applications to the computable type property.

\subsection{Decidability}
The reduction of computable type to homology implies a first decidability result in restricted cases.
\begin{proposition}
For finite simplicial pairs~$(X,A)$ such that~$X$ is pure or has dimension at most~$4$, whether~$(X,A)$ has computable type is decidable.
\end{proposition}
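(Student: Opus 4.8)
The plan is to decide computable type through a chain of equivalences that ultimately converts the question into a finite computation in simplicial homology. By the main result of \cite{AH22}, a finite simplicial pair $(X,A)$ has computable type if and only if it has the $\epsilon$-surjection property for some $\epsilon>0$. Since a simplicial pair is finitely conical, witnessed by the open stars of its vertices with local cone pairs $\C{L_i,N_i}$ (where the $L_i$ are the links of the vertices, hence finite simplicial complexes and in particular ANRs, and the $N_i$ are subcomplexes), Theorem \ref{thm_local_surjection} reduces this to a purely local question: $(X,A)$ has the $\epsilon$-surjection property for some $\epsilon>0$ if and only if every cone pair $\C{L_i,N_i}$ has the surjection property. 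As there are finitely many vertices, it suffices to show that the surjection property of each $\C{L_i,N_i}$ is decidable.

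The next step is to reduce the surjection property of $\C{L_i,N_i}$ to a homological condition, which is exactly where the hypotheses on $X$ enter. First I would record that if $X$ is pure of dimension $n$, then each link $L_i$ is pure of dimension $n-1$: a simplex $\sigma$ with $v\notin\sigma$ is maximal in the link of $v$ precisely when $v\ast\sigma$ is maximal in $X$, so purity forces $\dim(v\ast\sigma)=n$, i.e.\ $\dim\sigma=n-1$. When $n\geq 2$, $L_i$ is thus an almost $(n-1)$-Euclidean ANR of dimension $n-1$ and $N_i$ is an ANR, so Corollary \ref{cor_equivalence} applies and gives: $\C{L_i,N_i}$ has the surjection property if and only if every maximal simplex $C\subseteq L_i\setminus N_i$ belongs to a relative cycle. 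When instead $\dim(X)\leq 4$, every link has dimension at most $3$ and Corollary \ref{cor_dim3} yields the same equivalence. The remaining degenerate pieces are handled directly: the $0$-dimensional links arising when $n=1$ (the graph case, governed by Theorem \ref{thm_graph}) and the isolated vertices of a link (the dangling edges of $X$, which prevent $L_i$ from being almost Euclidean) carry only cells of dimension $1$, for which Corollary \ref{cor_retraction} applies with no global dimension hypothesis, so the corresponding null-homotopy questions remain homological and decidable.

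It then remains to observe that, for a maximal simplex $C$ of dimension $d$, belonging to a relative cycle is decidable. By Proposition \ref{prop_cycle_simplicial}, $C$ belongs to a relative cycle if and only if there is a simplicial chain with coefficients in $\RZ$ — equivalently in $\Z$ or in some $\Z/k\Z$ — whose boundary is supported in $N_i$ and whose coefficient on $C$ is non-zero. Equivalently, by the homology–cohomology duality recorded after Definition \ref{def_cycle}, this is the non-triviality of the induced map $H^d(L_i,L_i\setminus\op{C};\Z)\to H^d(L_i,N_i;\Z)$; since the source is isomorphic to $\Z$, one only has to test whether a generator has non-zero image. Both formulations are effective, since the simplicial (co)chain complexes are finite and explicit and the relevant groups and induced maps are computable by reduction of integer matrices to Smith normal form. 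Running the finitely many tests therefore decides computable type.

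The conceptual heart of the argument, and the reason the statement is confined to pure complexes or to dimension at most $4$, is the passage from the homotopy-theoretic surjection property to a homological cycle condition. This passage rests on Hopf's extension and classification theorems, which identify null-homotopy of a quotient map $q_C\colon(L_i,N_i)\to(\Sp_d,s)$ with a homological condition only when $d$ matches the dimension of $L_i$ or $d\leq 2$. Purity of $X$ guarantees this dimension match on every link, while $\dim(X)\leq 4$ keeps every link within the range $\dim\leq 3$ covered by Corollary \ref{cor_dim3}. Outside these regimes the null-homotopy question genuinely involves higher homotopy groups of spheres — as the spaces $X_f$ built from the Hopf map in Section \ref{sec_counter_examples} demonstrate — and deciding it requires the far heavier input of the decidability of simplicial homotopy \cite{FV20} rather than the elementary linear algebra used here. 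By contrast, the main technical care-point is mere bookkeeping: verifying the purity and dimension of links and disposing of the low-dimensional cones, none of which presents a real obstacle.
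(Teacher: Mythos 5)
Your proposal follows exactly the paper's route: reduce computable type to the $\epsilon$-surjection property, localize via the finitely conical structure and Theorem \ref{thm_local_surjection} to the surjection property of the vertex-link cone pairs $\C{L_i,N_i}$, observe that purity or $\dim(X)\leq 4$ passes to the links as purity or $\dim\leq 3$, invoke Corollaries \ref{cor_equivalence} and \ref{cor_dim3} to convert this to the relative-cycle condition, and decide that by computing simplicial homology. Your treatment is in fact more careful than the paper's one-paragraph proof, since you also address the degenerate low-dimensional links (isolated vertices, the graph case) that the paper's appeal to those corollaries silently glosses over.
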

\begin{proof}
Such a pair~$(X,A)$ is finitely conical and has computable type if and only if each cone pair~$\C{L_i,N_i}$ has the surjection property. If~$X$ is pure or has dimension at most~$4$, the each~$L_i$ is pure or has dimension at most~$3$, so the surjection property for~$\C{L_i,N_i}$ is equivalent to the fact that each maximal simplex of~$L_i$ belongs to a relative cycle in~$(L_i,N_i)$ by Corollaries \ref{cor_equivalence} and \ref{cor_dim3}. As homology groups of finite simplicial complexes can be computed, this property is therefore decidable.
\end{proof}

A recent result by Filakovsk{\'{y}} and Vokr{\'{\i}}nek even implies that computable type is decidable for \emph{all} finite simplicial pairs.
\begin{theorem}[\cite{FV20}]\label{thm_decidable_homotopy}
There is an algorithm that decides the existence of a pointed homotopy between given simplicial maps~$f,g:X\to Y$, where~$X,Y$ are finite simplicial sets and~$Y$ is simply connected.
\end{theorem}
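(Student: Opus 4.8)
The plan is to treat the homotopy-existence question as an extension problem and to resolve it stage by stage along a Postnikov tower of~$Y$, in the spirit of the effective homotopy theory of \v{C}adek, Kr\v{c}\'al, Matou\v{s}ek, Sergeraert and Vok\v{r}\'{\i}nek that underlies \cite{FV20}. First I would reduce deciding whether $f\simeq g$ to a single extension problem. Writing $I=[0,1]$ and letting $x_0$ be the basepoint, the maps $f,g\colon X\to Y$ are pointed-homotopic if and only if the map defined on the subcomplex $X\times\{0,1\}\cup\{x_0\}\times I$ of $X\times I$ — equal to $f$ on $X\times\{0\}$, to $g$ on $X\times\{1\}$, and constant on the basepoint segment — extends to all of $X\times I$. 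Hence it suffices to give an algorithm that, for a finite simplicial pair $(B,A)$ and a simplicial map $A\to Y$, decides whether it extends to a map $B\to Y$.

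The engine is obstruction theory relative to a computable Postnikov tower. Since $Y$ is finite and simply connected, its homotopy groups $\pi_n(Y)$ are finitely generated abelian groups and are effectively computable (effective homology, Brown's algorithm). One then builds stages $P_1\leftarrow P_2\leftarrow\cdots$ together with maps $Y\to P_n$ inducing isomorphisms on $\pi_{\le n}$, where $P_n$ is obtained from $P_{n-1}$ as the homotopy fibre of a $k$-invariant $P_{n-1}\to K(\pi_n(Y),n+1)$, all represented by explicit finite data. The given map $A\to Y$ is lifted up the tower, and one attempts to extend it over $B$ one stage at a time. At stage $n$ the obstruction to extending lives in the relative cohomology group $H^{n+1}(B,A;\pi_n(Y))$, which is computable; when this class vanishes an extension exists, and the set of extensions compatible with the $(n-1)$-stage is acted on by $H^n(B,A;\pi_n(Y))$. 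Because $B=X\times I$ is finite-dimensional, $H^{n+1}(B,A;-)$ vanishes for $n$ large, so the computation terminates after finitely many stages and returns a definite answer.

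The main obstacle is that the successive obstructions are not independent: the choice of extension made at stage~$n$, parametrised by $H^n(B,A;\pi_n(Y))$, changes the obstruction class computed at stage~$n+1$, so one cannot simply test each obstruction group for the zero class in isolation. The substantive content is to make this dependent, multi-stage obstruction computation effective — computing the $k$-invariants, the relevant induced cohomology operations, and the action of $H^n$ on the next obstruction class, while backtracking over the finitely many relevant choices — and to prove that the resulting procedure is both finite and correct. Here the hypothesis that $Y$ is simply connected is essential: it trivialises $\pi_1$, removes any fundamental-group action on the higher homotopy groups, and places the whole problem inside the clean abelian obstruction theory on which the algorithm relies.
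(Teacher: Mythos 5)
This theorem is not proved in the paper at all: it is quoted verbatim from Filakovsk\'y and Vokř\'{\i}nek \cite{FV20} as an external input, so there is no internal proof to compare your attempt against. What can be assessed is whether your sketch would actually establish the result, and there it has a genuine gap.

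The reduction of pointed homotopy to an extension problem over $X\times\{0,1\}\cup\{x_0\}\times I\subseteq X\times I$, the use of a computable Postnikov tower of the simply connected space $Y$, and the termination via $\dim(X\times I)<\infty$ are all correct and are indeed part of the standard framework. The gap is in the sentence where you propose to handle the interdependence of the obstructions by ``backtracking over the finitely many relevant choices.'' The indeterminacy at stage $n$ is parametrised by $H^n(B,A;\pi_n(Y))$, which is a finitely generated abelian group that is in general \emph{infinite} (it typically has free summands), so there are infinitely many candidate extensions at each stage and a brute-force backtracking search neither terminates nor is it clear how to enumerate a sufficient finite subset. Taming exactly this is the substantive content of \cite{FV20} and its precursors: rather than testing individual extensions, one must carry along an effective algebraic description of the \emph{entire} set of lifts at each stage (a computable group acting on the set of homotopy classes of lifts, with exact-sequence bookkeeping relating consecutive stages), and outside the stable range even this requires additional care because $[X,P_n]$ carries no natural group structure. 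Your outline correctly identifies where the difficulty sits but does not resolve it, so as written it is a plan rather than a proof; for the purposes of this paper, citing \cite{FV20} is the intended treatment.
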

First, it implies that the null-homotopy of simplicial maps from finite simplicial pairs to spheres is decidable.
\begin{corollary}\label{cor_dec_null_hom}
There is an algorithm that decides the null-homotopy of a simplicial map~$f:(X,A)\to (\Sp_n,s)$, where~$(X,A)$ is a finite simplicial pair and~$n\in\N$.
\end{corollary}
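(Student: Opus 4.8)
The plan is to reduce the problem to deciding a pointed homotopy between two explicit simplicial maps into a simply connected target, so that Theorem \ref{thm_decidable_homotopy} applies, and then to dispatch the two small exceptional values $n=0$ and $n=1$ by elementary means.

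First I would pass to the quotient $X/A$. Since $f$ is a simplicial map of pairs, $f(A)=\{s\}$, so $f$ is constant on $A$ and descends to a simplicial map~$\bar f:(X/A,p)\to(\Sp_n,s)$, where $p=[A]$ and~$X/A$ is a finite simplicial set (the quotient of a finite simplicial complex by a subcomplex). I claim that~$f$ is null-homotopic as a map of pairs if and only if~$\bar f$ is pointed-null-homotopic. Indeed, a homotopy of pairs~$H_t:(X,A)\to(\Sp_n,s)$ keeps~$A$ at~$s$ throughout, hence is constant on~$A$ for each~$t$ and thus factors through the quotient map~$X\to X/A$, producing a pointed homotopy~$\bar H_t$; conversely, any pointed homotopy of~$\bar f$ pulls back along~$X\to X/A$ to a homotopy of pairs. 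Under this correspondence the constant map at~$s$ matches the constant simplicial map~$c_s:X/A\to\Sp_n$, so null-homotopy of~$f$ is equivalent to pointed-homotopy of~$\bar f$ to~$c_s$.

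Next, for~$n\geq 2$ the sphere~$\Sp_n$ is simply connected, and both~$\bar f$ and~$c_s$ are simplicial maps from the finite simplicial set~$X/A$ into~$\Sp_n$. Theorem \ref{thm_decidable_homotopy} then decides whether~$\bar f$ and~$c_s$ are pointed-homotopic, which settles this case. It remains to treat~$n=0$ and~$n=1$, which fall outside the hypotheses of Theorem \ref{thm_decidable_homotopy}. For~$n=0$ the space~$\Sp_0$ is discrete, so every path in it is constant and two maps into~$\Sp_0$ are homotopic precisely when they are equal; hence~$\bar f$ is null-homotopic iff~$\bar f\equiv s$, i.e.\ iff~$f(X)=\{s\}$, which is decided by inspection. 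For~$n=1$ the circle is a~$K(\Z,1)$, so~$\bar f$ is null-homotopic iff the induced homomorphism on first simplicial homology~$\bar f_*:H_1(X/A;\Z)\to H_1(\Sp_1;\Z)\cong\Z$ vanishes; since the simplicial homology of finite complexes and the homomorphism induced by a simplicial map are computable, this is decidable.

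The only deep input is Theorem \ref{thm_decidable_homotopy}; the remaining work is bookkeeping. The two points requiring care are the \emph{pairs-to-pointed reduction}, where one must check that homotopies of pairs correspond exactly to pointed homotopies after collapsing~$A$ (so that a general pointed homotopy does not cheat by moving the image of~$A$ off~$s$), and the \emph{circumvention of the simple-connectivity hypothesis} for~$n=0,1$. I expect this latter point to be the main, though mild, obstacle: Theorem \ref{thm_decidable_homotopy} genuinely requires a simply connected target, so the circle and~$\Sp_0$ must be handled by hand, which is possible here only because maps into~$\Sp_0$ and into the aspherical~$\Sp_1$ admit an elementary, computable classification.
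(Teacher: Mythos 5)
Your proof is correct and follows essentially the same route as the paper: reduce null-homotopy of the pair map to pointed null-homotopy of the induced map on~$X/A$ and apply Theorem~\ref{thm_decidable_homotopy} for~$n\geq 2$, then settle~$n\leq 1$ by a computable homological criterion. The only (cosmetic) difference is in the low-dimensional cases, where the paper invokes Hopf's classification theorem with coefficients in~$\RZ$ (Theorem~\ref{thm_hopf_classification}, valid for~$n\leq 1$ with no dimension hypothesis), while you use the discreteness of~$\Sp_0$ and the classification~$[Y,\Sp_1]\cong\Hom(H_1(Y;\Z),\Z)$; both yield the same decidable condition.
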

\begin{proof}
When~$n\leq 1$, the null-homotopy of~$f$ is equivalent to the triviality of the homomorphism~$f_*:\tilde{H}_1(X,A;\RZ)\to \tilde{H}_1(\Sp_1,s;\RZ)$ by Hopf's classification theorem (Theorem \ref{thm_hopf_classification}), which is decidable. For~$n\geq 2$,~$\Sp_n$ is simply connected so we can apply Theorem \ref{thm_decidable_homotopy}, observing that the null-homotopy of~$f$ is equivalent to the null-homotopy of~$\tilde{f}:(X/A,A/A)\to (\Sp_n,s)$, which is a pointed homotopy.
\end{proof}

Together with our previous results, it implies that for finite simplicial pairs~$(X,A)$, whether~$(X,A)$ has computable type is decidable.
\begin{corollary}\label{cor_dec_comp_type}
There is an algorithm that decides whether a finite simplicial pair~$(X,A)$ has computable type.
\end{corollary}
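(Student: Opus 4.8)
The plan is to chain together the reductions already established in the paper so that deciding computable type becomes a finite collection of decidable null-homotopy tests. First I would recall from \cite{AH22} that a finite simplicial pair $(X,A)$ has computable type if and only if it has the $\epsilon$-surjection property for some $\epsilon>0$. A finite simplicial pair is finitely conical, witnessed by the open stars of its vertices, the local model at a vertex $v$ being the cone pair $\C{L_v,N_v}$ where $L_v$ is the link of $v$ (a finite simplicial complex, hence an ANR) and $N_v=L_v\cap A$. By Theorem \ref{thm_local_surjection}, $(X,A)$ has the $\epsilon$-surjection property for some $\epsilon>0$ if and only if each of the finitely many cone pairs $\C{L_v,N_v}$ has the surjection property, which reduces the global decision to finitely many local ones.

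Next I would decide the surjection property of a single cone pair $\C{L_v,N_v}$. Whenever $L_v$ is almost Euclidean, Theorem \ref{thm_surj_prop_cone} states that $\C{L_v,N_v}$ has the surjection property if and only if no quotient map $q_C:(L_v,N_v)\to(\Sp_n,s)$ is null-homotopic, as $C$ ranges over regular cells of $L_v\setminus N_v$. Although there is a continuum of such cells, up to homotopy $q_C$ depends only on which open maximal simplex $\op{C}$ meets (the same homotopy invariance $q_{C'}\sim q_C$ for nested cells used in the proof of Theorem \ref{thm: infinite union}), so it suffices to test one cell per maximal simplex $\sigma\not\subseteq N_v$, of which there are finitely many. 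Taking $C=\sigma$, the quotient map is naturally a simplicial map to $\Sp_n\cong\Delta_n/\partial\Delta_n$ sending $\sigma$ to the top cell and $L_v\setminus\op{\sigma}$ to the basepoint, and its null-homotopy is decidable by Corollary \ref{cor_dec_null_hom}. The algorithm would then report that $(X,A)$ has computable type precisely when none of these finitely many quotient maps, over all vertices $v$ and all maximal simplices $\sigma$, is null-homotopic.

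The one point requiring care is the hypothesis of Theorem \ref{thm_surj_prop_cone}: a link $L_v$ fails to be almost Euclidean exactly when it has an isolated vertex, i.e.~a maximal $0$-simplex, corresponding to a ``dangling'' ray in $\C{L_v}$. Such vertices are not detected by the regular-cell criterion but contribute only an elementary, manifestly decidable condition. Indeed, a cone pair with a dangling ray over a vertex $w\in L_v\setminus N_v$ can always be retracted away from an interior point of that ray, so the surjection property forces every isolated vertex of $L_v$ to lie in $N_v$; once this is checked, one applies the regular-cell criterion to the almost-Euclidean remainder. I would verify that these two conditions combine correctly, so that the surjection property of $\C{L_v,N_v}$ is equivalent to the conjunction of the dangling-vertex condition and the non-null-homotopy of the finitely many simplicial quotient maps of the almost-Euclidean part.

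I expect the substance of the argument to lie not in any single step but in assembling them correctly; the genuinely deep ingredient, the decidability of null-homotopy into spheres, is supplied by Corollary \ref{cor_dec_null_hom} (ultimately Theorem \ref{thm_decidable_homotopy}), which I am free to invoke. The main obstacle is therefore the bookkeeping that guarantees finiteness at every stage---finitely many local cones, and within each cone finitely many homotopy classes of quotient maps---together with the treatment of the non-almost-Euclidean links, which must be handled outside the scope of Theorem \ref{thm_surj_prop_cone}.
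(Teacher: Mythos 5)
Your main chain of reductions --- computable type $\iff$ $\epsilon$-surjection property for some $\epsilon$ $\iff$ surjection property of each link cone pair $\C{L_v,N_v}$ $\iff$ non-null-homotopy of the finitely many quotient maps attached to the maximal simplices of $L_v$ not in $N_v$, each test decided by Corollary \ref{cor_dec_null_hom} --- is exactly the paper's proof. The genuine problem is the extra condition you impose on links that fail to be almost Euclidean. Your claim that ``the surjection property forces every isolated vertex of $L_v$ to lie in $N_v$'' is false. Take $X=\Sp_1$ triangulated as a triangle and $A=\emptyset$: every link $L_v$ consists of two isolated vertices and $N_v=\emptyset$, yet $\C{L_v,N_v}\cong([-1,1],\{-1,1\})$ has the surjection property by the intermediate value theorem. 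Your algorithm would declare that the circle does not have computable type, contradicting Miller's theorem. The source of the error is that the endpoint $w$ of the ``dangling ray'' lies in $L_v$, which is part of the fixed set $L_v\cup\C{N_v}$ of the cone pair; no map that is the identity on $L_v$ can miss an interior point of that ray once $L_v\neq\{w\}$, because its image is connected, contains $w$ and another point of $L_v$, and every interior point of the ray separates these two in $\C{L_v}$ (the ray meets the rest of the cone only at the tip).

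The correct handling is the opposite of what you propose: an isolated vertex $w\in L_v\setminus N_v$ defeats the surjection property only in the degenerate case $L_v=\{w\}$, $N_v=\emptyset$ (the cone pair is $([0,1],\{1\})$ and the constant map at $w$ is a counterexample); otherwise the connectedness argument shows the rays over isolated vertices are automatically covered, and only the regular cells of the Euclidean part need testing, recovering the paper's criterion. Your instinct to worry about the almost-Euclidean hypothesis of Theorem \ref{thm_surj_prop_cone} is sound --- the paper's own proof passes over this point in silence, and a link with an isolated vertex is indeed not almost Euclidean --- but the condition you substitute is wrong and must be replaced by the analysis above. The remaining steps (one quotient map per maximal simplex by homotopy invariance of $q_C$ under shrinking the cell, finiteness at every stage, and the appeal to Corollary \ref{cor_dec_null_hom}) are correct and coincide with the paper's argument.
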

\begin{proof}
One has the following chain of equivalences:
\begin{align*}
&(X,A)\text{ has computable type}\\
&\iff \exists \epsilon>0, (X,A)\text{ has the~$\epsilon$-surjection property (from \cite{AH22})}\\
&\iff \text{all the cone pairs~$\C{L_i,N_i}$ have the surjection property (Theorem \ref{thm_local_surjection})}\\
&\iff\text{all the quotient maps~$q:\C{L_i,N_i}\to(\Sp_n,s)$ are not null-homotopic (Theorem \ref{thm_surj_prop_cone})}.
\end{align*}
The quotient maps are associated to the maximal simplices of~$L_i$ that are not contained in~$N_i$. There are finitely many such simplices and finitely many cone pairs~$\C{L_i,N_i}$, so this predicate is decidable by Corollary \ref{cor_dec_null_hom}.
\end{proof}

\subsection{Cones of manifolds}

Let~$(M,\partial M)$ be a compact~$n$-manifold with (possible empty) boundary. It is almost~$n$-Euclidean, because every point of~$M\setminus \partial M$ is~$n$-Euclidean.

One has~$H_n(M,\partial M;\Z/2\Z)\cong \Z/2\Z$ and the fundamental homology class intuitively contains every point of~$M\setminus \partial M$. More formally, every regular~$n$-cell~$C\subseteq M\setminus \partial M$ belongs to a relative cycle with coefficients in~$\Z/2\Z$ (therefore in~$\RZ$), in the sense of Definition \ref{def_cycle}. Indeed,~$M\setminus \partial M$ is~$\Z/2\Z$-orientable so the homomorphism
\begin{equation*}
H_n(M,\partial M;\Z/2\Z)\to H_n(M,M\setminus \op{C};\Z/2\Z)\cong \Z/2\Z
\end{equation*}
is an isomorphism, sending the fundamental class to the generator of~$\Z/2\Z$ (see~\cite{Hatcher02} for instance).

Therefore, we can apply Corollary \ref{cor_implication}, giving the next result.
\begin{corollary}\label{cor_cone_manifold}
If~$(M,\partial M)$ is a compact manifold with possibly empty boundary, then the pair~$\C{M,\partial M}$ has the surjection property hence has computable type.
\end{corollary}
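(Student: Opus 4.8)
The plan is to check that $(M,\partial M)$ satisfies the hypotheses of Corollary \ref{cor_implication}, and then to convert the surjection property of $\C{M,\partial M}$ into computable type. Since it is already recorded that $(M,\partial M)$ is almost $n$-Euclidean, the one thing to verify is that every regular cell $C\subseteq M\setminus\partial M$ belongs to a relative cycle in the sense of Definition \ref{def_cycle}. A useful preliminary observation is that, by invariance of domain, a regular $k$-cell cannot have its open cell $\op{C}$ open in the $n$-manifold $M\setminus\partial M$ unless $k=n$; hence every regular cell contained in $M\setminus\partial M$ is in fact an $n$-cell, and it suffices to treat those.

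For the relative-cycle claim I would work with coefficients in $\Z/2\Z$, which is enough because a relative cycle over $\Z/2\Z$ is a relative cycle over $\RZ$ (Proposition \ref{prop_coefficients}). The key input is that every manifold is $\Z/2\Z$-orientable, so that $H_n(M,\partial M;\Z/2\Z)$ carries a fundamental class restricting to a local generator at each interior point. Combined with the identification $H_n(M,M\setminus\op{C};\Z/2\Z)\cong\Z/2\Z$ coming from excision, this shows that the canonical homomorphism $H_n(M,\partial M;\Z/2\Z)\to H_n(M,M\setminus\op{C};\Z/2\Z)$ is an isomorphism, in particular nontrivial; this is precisely the condition that $C$ belongs to a relative cycle. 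Corollary \ref{cor_implication} then delivers the surjection property for $\C{M,\partial M}$.

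To pass from the surjection property to computable type, I would first use that $\C{M,\partial M}$ is a cone pair, so Proposition \ref{prop_cone_epsilon} upgrades the surjection property to the $\epsilon$-surjection property for every $\epsilon>0$. It then remains to apply the characterization of computable type by the $\epsilon$-surjection property established in \cite{AH22}, which is valid for compact ANRs covered by finitely many regular cells. Here I would invoke that $M$ is a compact ANR which is finitely conical with sphere and ball links, that finitely conical pairs are closed under the cone operator (Proposition \ref{prop_loc_con_product}), and that the cone of a compact ANR is again a compact ANR, so that $\C{M,\partial M}$ lies in the class for which the characterization applies.

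The main mathematical content, and the step I expect to require the most care, is the relative-cycle claim. For smooth or low-dimensional, hence triangulable, manifolds it reduces via Proposition \ref{prop_cycle_simplicial} to an explicit statement about the simplicial fundamental class; but for a general compact topological manifold one must genuinely rely on the local-homological construction of the $\Z/2\Z$ fundamental class and its naturality with respect to the restriction maps $H_n(M,\partial M)\to H_n(M,M\setminus\op{C})$. A secondary technical point to pin down is that $\C{M,\partial M}$ indeed belongs to the class of spaces for which the $\epsilon$-surjection property is equivalent to computable type.
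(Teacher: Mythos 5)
Your proposal is correct and follows essentially the same route as the paper: verify that $(M,\partial M)$ is almost $n$-Euclidean, use $\Z/2\Z$-orientability and the fundamental class to show every regular cell in $M\setminus\partial M$ belongs to a relative cycle, apply Corollary \ref{cor_implication}, and note that the characterization of computable type from \cite{AH22} extends to spaces covered by finitely many regular cells. Your invariance-of-domain remark (that every regular cell in $M\setminus\partial M$ is necessarily an $n$-cell) makes explicit a point the paper leaves implicit, but the argument is the same.
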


Strictly speaking, the results in \cite{AH22} are stated for finite simplicial complexes. However, the arguments work for any space which is covered by finitely many regular cells, including cones of compact manifolds.

\subsection{The odd subcomplex}
If~$K$ is a finite simplicial complex, then there is a natural subcomplex~$A$ of~$K$ such that the pair~$(K,A)$ has the~$\epsilon$-surjection property for some~$\epsilon>0$. This subcomplex is the \emph{odd subcomplex} of~$K$.

\begin{definition}
Let~$K$ be a finite simplicial complex of dimension~$m$. For~$0\leq n< m$,
the \textbf{odd~$n$-subcomplex}~$\oddn nK$ of~$K$ is the collection
of all~$n$-simplices of~$K$ that are contained in an odd number
of maximal~$(n+1)$-simplices in~$K$. The \textbf{odd subcomplex}~$\odd K$
of~$K$ is the union~$\bigcup_{0\leq n< m}\oddn nK$.
\end{definition}

Clearly, for every finite simplicial complex~$K$,~$\odd K$ has empty interior because it contains no maximal simplex of~$K$. The next result is announced in \cite{AH22}, without proof.

\begin{theorem}
\label{thm_odd}For every finite simplicial complex~$K$, the pair~$(K,\odd K)$ has the~$\epsilon$-surjection property for some~$\epsilon>0$, hence has computable type.
\end{theorem}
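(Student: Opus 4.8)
The plan is to apply Theorem \ref{thm_cycle_epsilon} to the pair $(K,\odd{K})$, so that it suffices to show that every regular cell $C\subseteq K\setminus\odd{K}$ belongs to a relative cycle. Before that I would dispose of isolated vertices: an isolated vertex is a clopen point, hence is fixed by any self-map of $K$ that is close to the identity, and so never obstructs surjectivity. Discarding them, I may assume every vertex lies in an edge, so that every maximal simplex has dimension at least $1$, the open maximal simplices are dense and Euclidean, and—since $\odd{K}$ has empty interior—the pair $(K,\odd{K})$ is almost Euclidean. As $K$ is moreover a compact ANR, the only hypothesis of Theorem \ref{thm_cycle_epsilon} left to check is the relative-cycle condition.

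The heart of the argument is an explicit mod-$2$ chain. For each $n\geq 1$ let $w_n$ be the simplicial $n$-chain with coefficients in $\Z/2\Z$ equal to the formal sum of all maximal $n$-simplices of $K$. The key computation is that, reading $\partial w_n$ modulo $2$, the coefficient of an $(n-1)$-simplex $\tau$ equals the parity of the number of maximal $n$-simplices containing $\tau$, which is nonzero precisely when $\tau\in\oddn{n-1}{K}\subseteq\odd{K}$ (here $0\leq n-1<\dim K$). Thus $\partial w_n$ is supported on $\odd{K}$, so $w_n$ represents a class $[w_n]\in H_n(K,\odd{K};\Z/2\Z)$, and via the coefficient inclusion $\Z/2\Z\hookrightarrow\RZ$ (Proposition \ref{prop_coefficients}) a class in $H_n(K,\odd{K};\RZ)$. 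Since $w_n$ carries coefficient $1$ on every maximal $n$-simplex, Proposition \ref{prop_cycle_simplicial} already shows that each maximal $n$-simplex belongs to a relative cycle.

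It remains to treat an arbitrary regular $n$-cell $C\subseteq K\setminus\odd{K}$, whose open cell need not be a single open simplex. Here I would compute the image of $[w_n]$ under the canonical homomorphism $H_n(K,\odd{K};\RZ)\to H_n(K,K\setminus\op{C};\RZ)$ of Definition \ref{def_cycle}. Using the natural isomorphism $H_n(K,K\setminus\op{C};\RZ)\cong\RZ$ established just before Corollary \ref{cor_implication}, this image is the local coefficient of $w_n$ along $C$: since $\op{C}$ is $n$-Euclidean it is covered by maximal $n$-simplices, each appearing in $w_n$ with coefficient $1$, so $[w_n]$ restricts to the local fundamental class of $C$ and its image is a generator. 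Hence the canonical homomorphism is non-trivial and $C$ belongs to a relative cycle. Theorem \ref{thm_cycle_epsilon} then yields the $\epsilon$-surjection property for some $\epsilon>0$, and computable type follows from the equivalence for finite simplicial pairs proved in \cite{AH22}.

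The elegant part is the boundary computation of the second paragraph, which reproduces the defining parity condition of the odd subcomplex essentially for free. I expect the genuine obstacle to lie in the passage from maximal simplices to general regular cells: one must verify that the single globally defined class $[w_n]$ detects every cell, including those whose open part crosses codimension-one faces shared by two maximal $n$-simplices, which is exactly what the identification of $[w_n]$ with the local fundamental class accomplishes. The almost-Euclidean hypothesis must also be secured separately, through the isolated-vertex reduction.
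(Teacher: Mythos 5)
Your proof is correct and is essentially the paper's own argument: apply Theorem \ref{thm_cycle_epsilon} after exhibiting the mod-$2$ chain equal to the sum of all maximal $n$-simplices, whose boundary is by construction supported on $\odd{K}$, so that Proposition \ref{prop_cycle_simplicial} gives the relative-cycle condition. The two extra points you raise --- discarding isolated vertices to secure the almost-Euclidean hypothesis, and passing from maximal simplices to arbitrary regular cells by localizing the class --- are handled correctly and are simply left implicit in the paper, which reduces directly to maximal simplices.
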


\begin{proof}
Any finite simplicial complex is an ANR, so the pair~$(K,\odd K)$ has the assumption of Theorem \ref{thm_cycle_epsilon}, therefore it is sufficient to show that every maximal simplex in~$K$ belongs to a relative cycle. We use the formulation of this latter property provided by Proposition \ref{prop_cycle_simplicial}.

Let~$S$ be a maximal~$n$-simplex of~$K$. Let~$c$ be the formal sum of the maximal~$n$-simplices of~$K$, seen as an~$n$-chain with coefficients in~$\Z/2\Z$. The boundary of~$c$ is the sum of the~$(n-1)$-simplices that are contained in an odd number of maximal~$n$-simplices. Therefore,~$\partial c$ is a chain of~$\odd K$ so~$c$ is a relative cycle in~$(K,\odd K)$. As~$S$ is itself a maximal~$n$-simplex, its coefficient in~$c$ is~$1$, so~$S$ belongs to a relative cycle and the proof is complete.
\end{proof}
\subsection{Computable type and the product}
Whether the computable type property is preserved by taking products was an open question, raised by \v{C}elar and Iljazovi\'{c} in \cite{CI21}. Theorem \ref{thm_product_counterexample} enables us to give a negative answer  to this question. Note that~$(\BB_1,\Sp_0)$ and~$\Sp_1$ both have computable type.

\begin{corollary}\label{cor_ct_product}
There exists a finite simplicial pair~$(X,A)$ that has computable type, but such that the product~$(X,A)\times (\BB_1,\Sp_0)$ does not.

There exists a finite simplicial complex~$Y$ that has computable type, but such that the product~$Y\times \Sp_1$ does not.
\end{corollary}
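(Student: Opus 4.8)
The plan is to derive Corollary \ref{cor_ct_product} directly from Theorem \ref{thm_product_counterexample} by invoking the equivalence between computable type and the $\epsilon$-surjection property that was established for finite simplicial pairs in \cite{AH22}. Since this equivalence is the key bridge, the entire proof reduces to a short transfer of the surjection-theoretic statements of Theorem \ref{thm_product_counterexample} into the language of computable type.

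First I would recall from \cite{AH22} that a finite simplicial pair $(X,A)$ has computable type if and only if it has the $\epsilon$-surjection property for some $\epsilon>0$. Theorem \ref{thm_product_counterexample} produces a finite simplicial pair $(X,A)=(\C{X_f},X_f)$ which has the surjection property, hence (trivially) the $\epsilon$-surjection property for every $\epsilon>0$, so $(X,A)$ has computable type. The same theorem asserts that the product $(X,A)\times(\BB_1,\Sp_0)$ does \emph{not} have the $\epsilon$-surjection property for any $\epsilon>0$; applying the equivalence in the contrapositive direction, this product therefore does not have computable type. This gives the first statement. For the second, I would argue identically with $Y=\sus X_f$, which has the $\epsilon$-surjection property for some $\epsilon>0$ by Theorem \ref{thm_product_counterexample}, hence computable type, while $Y\times\Sp_1$ fails the $\epsilon$-surjection property for every $\epsilon>0$, hence fails computable type.

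I would then note, as a sanity check mentioned just before the corollary, that both $(\BB_1,\Sp_0)$ and $\Sp_1$ do have computable type, so the failure genuinely reflects non-preservation under products rather than a defect of one factor; this addresses the question of \v{C}elar and Iljazovi\'{c} in \cite{CI21}. The only point requiring a moment of care is verifying that the objects supplied by Theorem \ref{thm_product_counterexample} and their relevant products are again finite simplicial pairs (so that the equivalence of \cite{AH22} applies to them as well). This holds because $X_f$ can be taken simplicial by the Simplicial Approximation Theorem, and the cone, suspension, and products with $(\BB_1,\Sp_0)$ and $\Sp_1$ preserve the class of finite simplicial complexes.

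The main obstacle, if any, is purely bookkeeping: one must ensure that the equivalence between computable type and the $\epsilon$-surjection property from \cite{AH22} is being applied to the correct objects — namely to the product pairs, not only to the factors — and that these products are covered by finitely many regular cells so the cited equivalence is legitimately available. Once this is checked, the corollary is an immediate translation of Theorem \ref{thm_product_counterexample}, and no further topological or homotopy-theoretic work is needed.
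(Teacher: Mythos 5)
Your proposal is correct and follows exactly the paper's argument: invoke the equivalence between computable type and the $\epsilon$-surjection property for finite simplicial pairs from \cite{AH22}, then apply Theorem \ref{thm_product_counterexample} to both the factors and the products. The additional bookkeeping you mention (checking the products are again finite simplicial pairs) is sound and consistent with the paper's remarks.
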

\begin{proof}
For simplicial spaces and pairs, computable type is equivalent to the~$\epsilon$-surjection property for some~$\epsilon$, so we can simply apply Theorem \ref{thm_product_counterexample}.
\end{proof}

Note however that the converse direction holds: it was proved in \cite{AH22c} that if~$(X,A)\times (Y,B)$ has computable type, then both~$(X,A)$ and~$(Y,B)$ have computable type (assuming that they both have a semicomputable copy, which is the case for finite simplicial pairs). This implication is consistent with Proposition \ref{prop_product}, which shows that the~$\epsilon$-surjection property behaves similarly.

\section{Conclusion}\label{sec_conclusion}

The surjection property and the~$\epsilon$-surjection property were introduced in \cite{AH22} as topological characterizations of a computability-theoretic property of certain compact metrizable spaces, called computable type. In this article, we have developed several techniques to establish or refute these properties, applicable to classes of spaces including finite simplicial complexes and compact manifolds.

We have established precise relationships between the ($\epsilon$-)surjection property and the homotopy of certain quotient maps to spheres, which enables us to take advantage of results from homology and homotopy theory. We have given applications of these techniques. The first one is that the cone of a compact manifold has computable type. The second one is that any finite simplicial complex together with its odd boundary has computable type. The third and most important application is an answer to a question raised in \cite{CI21}, showing that the computable type property is not preserved by taking products.

%


\bibliographystyle{elsarticle-num} 
\bibliography{biblio}

\newpage
\appendix

We use the appendix to include statements and proofs of results that are either classical but phrased differently in most references, or that are folklore results for which we found no reference.

\section{Homotopies}\label{sec_hom}

\subsection{Pair vs quotient}

In many cases, a pair~$(X,A)$ can be equivalently replaced by the quotient space~$X/A$. The following proposition is an instance of this fact, and is a combination of Proposition 4A.2 and Example 4A.3 in \cite{Hatcher02}.

Let~$(X,A)$ be a pair. There is a one-to-one correspondence between the continuous functions~$f:X\to \Sp_n$ that are constant on~$A$ and the continuous functions~$g:X/A\to\Sp_n$. If~$f:X\to \Sp_n$ is constant on~$A$ then we denote by~$\widetilde{f}:X/A\to\Sp_n$ the function satisfying~$f=\widetilde{f}\circ q$, where~$q:X\to X/A$ is the quotient map.

\begin{proposition}\label{prop_quotient_pair}
Let~$n\geq 1$ and let~$s$ be a distinguished point of~$\Sp_n$. A function of pairs~$f:(X,A)\to (\Sp_n,s)$ is null-homotopic if and only if~$\widetilde{f}:X/A\to\Sp_n$ is null-homotopic.
\end{proposition}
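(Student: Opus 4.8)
The plan is to mediate everything through the quotient map $q:X\to X/A$, matching null-homotopies of the pair map $f$ with free null-homotopies of $\widetilde f$, and to show that the only genuine difficulty lies in one direction. Throughout, ``$f$ is null-homotopic as a map of pairs'' means there is a homotopy $H_t:X\to\Sp_n$ with $H_0=f$, with $H_t(A)=\{s\}$ for every $t$, and with $H_1$ constant (necessarily equal to $s$, since $H_1(A)=\{s\}$); ``$\widetilde f$ is null-homotopic'' means the ordinary free null-homotopy of a map of spaces.

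For the forward direction I would start from a pair-null-homotopy $H:X\times I\to\Sp_n$ of $f$. Since each $H_t$ is constant on $A$, the map $H$ is constant on every fibre of $q\times\id_I:X\times I\to (X/A)\times I$ (the fibres being the slices $A\times\{t\}$ and the singletons $\{(x,t)\}$ for $x\notin A$). As $X$ is compact metrizable and $A$ is closed, $X/A$ is compact Hausdorff, so $q\times\id_I$ is a continuous surjection of compact Hausdorff spaces, hence a closed map and therefore a quotient map; consequently $H$ descends to a continuous $G:(X/A)\times I\to\Sp_n$ with $G\circ(q\times\id_I)=H$, and $G_0=\widetilde f$, $G_1$ constant. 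This direction is routine.

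For the converse I would take a free null-homotopy $G$ with $G_0=\widetilde f$ and $G_1\equiv c$, and form $H:=G\circ(q\times\id_I)$, so $H_0=f$ and $H_1\equiv c$. The catch is that $H$ need not fix $A$ at $s$: writing $p=[A]$, it drags $A$ along the path $\gamma(t)=G(p,t)$ from $s$ to $c$. Thus $H$ is only a \emph{free} null-homotopy of $f$, and the real content of the proposition is to upgrade it to a null-homotopy through maps of pairs. This is where I expect the main obstacle to be.

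To overcome it I would invoke the comparison between based and free homotopy classes (Hatcher \S4.A, Proposition 4A.2 and Example 4A.3): for a well-pointed space $(Z,z_0)$ and path-connected target $\Sp_n$, two based maps are freely homotopic iff they lie in the same orbit of the action of $\pi_1(\Sp_n,s)$ on $[Z,\Sp_n]_*$. The key elementary point is that the based class of the constant map $c_s$ is a \emph{fixed point} of this action, since dragging $c_s$ around a loop $\omega$ is realised by the globally constant homotopy $(z,t)\mapsto\omega(t)$, which returns to $c_s$; hence the free class of $c_s$ has the single based preimage $[c_s]_*$. Applying this with $Z=X/A$, $z_0=p$, the homotopy $G$ shows $\widetilde f$ is freely homotopic to $c_s$, so $[\widetilde f]_*=[c_s]_*$, i.e.\ $\widetilde f$ is based null-homotopic; composing that based null-homotopy with $q\times\id_I$ (now holding the basepoint fixed) gives a null-homotopy of $f$ through maps of pairs. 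The only hypothesis beyond the statement is well-pointedness of $(X/A,p)$, which is automatic in every application, since the quotient of a finite simplicial pair (indeed any CW pair) is a CW complex and hence well-pointed at the vertex $p$. For $n\ge 2$ the subtlety vanishes, as $\Sp_n$ is simply connected and free and based null-homotopy coincide; the $\pi_1$-bookkeeping is needed only when $n=1$.
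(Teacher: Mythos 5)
Your argument is correct in substance but takes a different route from the paper's for the nontrivial direction. Both proofs handle the easy implication identically (descend the pair-homotopy through the quotient map $q\times\id_I$, which is closed hence a quotient map between compact Hausdorff spaces), and both identify the real content as upgrading a free null-homotopy of $\widetilde f$ to a based one. For that step the paper argues by hand: for $n=1$ it uses the group structure on $\Sp_1=\R/\Z$ to translate the homotopy pointwise so that the basepoint stays at $s$, and for $n\geq 2$ it applies Borsuk's homotopy extension theorem on $[0,1]\times(X/A)$ together with simple connectivity of $\Sp_n$ to contract the loop traced by the basepoint. You instead invoke the orbit description of free versus based homotopy classes under the $\pi_1(\Sp_n,s)$-action and observe that the class of the constant map is a fixed point of that action, so its orbit is a singleton; this gives a uniform, more conceptual argument covering $n=1$ and $n\geq 2$ at once (and is in fact the Hatcher \S4.A route that the paper cites as the source of the statement). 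What the paper's longer argument buys is generality: the proposition is stated for arbitrary compact pairs and is applied (via the ``Pair vs quotient'' corollary) to almost Euclidean compact pairs, which need not be CW complexes, so your justification of well-pointedness of $(X/A,p)$ via ``quotient of a CW pair'' does not cover all intended cases. The gap is easily closed, though: the orbit description only needs the homotopy extension property of $(X/A,\{p\})$ \emph{with respect to the target} $\Sp_n$, and since $\Sp_n$ is an ANR and $\{p\}$ is closed in the metrizable space $X/A$, Borsuk's homotopy extension theorem supplies this for every compact pair, with no CW or well-pointedness hypothesis. With that substitution your proof matches the paper's generality.
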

\begin{proof}
Let~$p$ be the equivalence class of~$A$ in~$X/A$. Note that~$\widetilde{f}$ can also be seen as a function of pairs~$\widetilde{f}_2:(X/A,p)\to (\Sp_n,s)$, which we denote differently to avoid confusions. The null-homotopy of~$f$ is easily equivalent to the null-homotopy of~$\widetilde{f}_2$, inducing a null-homotopy of~$\widetilde{f}$. We need to show that a null-homotopy of~$\widetilde{f}$ (a function between sets) implies a null-homotopy of~$\widetilde{f}_2$ (a function between pairs).

Let~$h_t:X/A\to\Sp_n$ be a homotopy from~$h_0=\widetilde{f}$ to a constant function~$h_1$. We use a different argument for~$n=1$ and for~$n\neq 1$.

Assume that~$n=1$. The circle~$\Sp_1$ can be seen as the additive group~$\R/\Z$ with~$s$ as the~$0$ element. Let~$g_t:(X/A,p)\to (\Sp_1,s)$ be defined by~$g_t(x)=h_t(x)-h_t(p)+h_0(p)$. One easily checks that~$g_0=h_0=\widetilde{f}$,~$g_t(p)=h_0(p)=s$ for all~$t$ and~$g_1(x)=s$ for all~$x$, so~$g_t$ is a null-homotopy of~$\widetilde{f}_2$.

Assume that~$n\neq 1$. Let~$Y=[0,1]\times (X/A)$ and~$B=(\{0,1\}\times X/A)\cup ([0,1]\times\{p\})\subseteq Y$. The null-homotopy~$h_t$ is a function~$h:Y\to\Sp_n$. Let~$g:B\to \Sp_n$ be the continuous function defined by
\begin{equation*}
g(t,x)=\begin{cases}
\widetilde{f}(x)&\text{if }t=0,\\
s&\text{otherwise.}
\end{cases}
\end{equation*}

A null-homotopy of~$\widetilde{f}_2$ is a continuous extension~$G:Y\to \Sp_n$ of~$g$. In order to show that such an extension exists, it is sufficient to show that~$g$ is homotopic to the restriction of~$h$ to~$B$, by applying Borsuk's homotopy extension theorem (Theorem 1.4.2, p.~38 in \cite{Mill01}) and the fact that~$\Sp_n$ is an ANR.

So let us define a homotopy~$k_t:B\to \Sp_n$ from~$k_0=g$ to~$k_1=h|_B$. We decompose~$B$ as~$B_0\cup B_1$, where~$B_0=\{0,1\}\times X/A$ and~$B_1=[0,1]\times \{p\}$. On~$B_0$,~$g$ and~$h$ coincide, and we define~$k_t(z)=g(z)=h(z)$ for all~$t$ and all~$z\in B_0$. On~$B_1$,~$g$ has constant value~$s$ and the restriction of~$h$ to~$B_1$ is a loop from~$s$ to itself. As~$\Sp_n$ is simply connected, that loop is contractible, i.e.~there exists a homotopy from~$g|_{B_1}$ to~$h|_{B_1}$, and we define~$k_t|_{B_1}$ as this homotopy.

The homotopy~$k_t:B\to \Sp_n$ is well-defined because it is consistent on~$B_0\cap B_1=\{0,1\}\times \{p\}$, where~$k_t(0,p)=k_t(1,p)=s$. Therefore,~$k_t$ is continuous and is a homotopy from~$g$ to~$h|_B$. It can be extended to a homotopy from some extension of~$g$ to~$h$, because~$\Sp_n$ is an ANR. The extension of~$g$ is a null-homotopy of~$\widetilde{f}_2$.
\end{proof}

\subsection{Homotopy extension property}
Let us recall an important property of pairs.
\begin{definition}
A pair~$(X,A)$ has the \textbf{homotopy extension property} if for every topological space~$Y$, for every continuous function~$f:X\to Y$ and every homotopy~$h_t:A\to Y$ such that~$h_0=f|_A$, there exists a homotopy~$H_t:X\to Y$ such that~$H_0=f$ and~$H_t|_A=h_t$.
\end{definition}
It is well-known that a pair~$(X,A)$ has the homotopy extension property if and only if~$X\times [0,1]$ retracts to~$(X\times\{0\})\cup (A\times [0,1])$ (see Hatcher \cite{Hatcher02}, Proposition A.18, p.~533). If~$(X,A)$ is a CW pair, then it has the homotopy extension property (see Hatcher \cite{Hatcher02}, Proposition 0.16, p.~15).
\begin{definition}
Let~$X$ be a topological space and~$(Y,d)$ a metric space. For~$\epsilon>0$, an~$\epsilon$\textbf{-homotopy} is a homotopy function~$h_t:X\to Y$ such that~$d(h_t(x),h_0(x))<\epsilon$ for all~$x,t$.
\end{definition}

The homotopy extension property actually implies a controlled version.
\begin{lemma}\label{lem_controlled_hep}
Let~$(X,A)$ be a pair satisfying the homotopy extension property. If~$(Y,d)$ is a metric space,~$f:X\to Y$ is continuous,~$h_t:A\to Y$ is an~$\epsilon$-homotopy such that~$h_0=f|_A$, then there exists an~$\epsilon$-homotopy~$H_t:X\to Y$ such that~$H_0=f$ and~$H_t|_A=h_t$.
\end{lemma}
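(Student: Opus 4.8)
The plan is to rephrase the controlled extension as the extension of a partial \emph{section} of a path fibration, and then to exploit that the relevant fibres deformation‑retract through paths whose images only shrink, so that the constraint $d(\,\cdot\,,\id)<\epsilon$ is preserved automatically. Write $PY$ for the space of paths $I\to Y$ with the sup‑metric, and let $e_0\colon PY\to Y,\ \gamma\mapsto\gamma(0)$, be the (Hurewicz) path fibration. By adjunction, the data of $f$ and of the $\epsilon$‑homotopy assemble into a map $f\colon X\to Y$, a partial lift $\hat h\colon A\to PY$, $\hat h(a)(t)=h_t(a)$, with $e_0\circ\hat h=f|_A$ and $\sup_t d(\hat h(a)(t),f(a))<\epsilon$. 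Producing the desired $H_t$ is exactly producing $\hat H\colon X\to PY$ with $e_0\circ\hat H=f$, $\hat H|_A=\hat h$, and $\sup_t d(\hat H(x)(t),f(x))<\epsilon$ for every $x$, after which $H_t(x):=\hat H(x)(t)$ is the homotopy sought. In fibred language, $\hat H$ is a section of the pullback $p\colon\mathcal E=f^{*}PY\to X$ that extends $\hat h$ over $A$ and lands in the subspace $\mathcal E_\epsilon=\{(x,\gamma):\gamma(0)=f(x),\ \sup_t d(\gamma(t),f(x))<\epsilon\}$.

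First I would record two structural facts. The fibres of $p$ are based path spaces, which deformation‑retract onto the constant path via the reparametrization $\gamma\mapsto\gamma(\min(\,\cdot\,,s))$; since this operation only shrinks the image of each path, it leaves $\mathcal E_\epsilon$ invariant and yields a fibrewise deformation retraction of $\mathcal E_\epsilon$ onto its constant‑path section $z\colon X\to\mathcal E_\epsilon$. In particular $p\colon\mathcal E_\epsilon\to X$ is a homotopy equivalence, with homotopy inverse $z$. Secondly, because $(X,A)$ has the homotopy extension property, the inclusion $A\hookrightarrow X$ is a cofibration. The strategy is then the standard cofibration–fibration lifting: once $p\colon\mathcal E_\epsilon\to X$ is known to be a Hurewicz fibration, being also a homotopy equivalence it is a trivial fibration, hence has the right lifting property against the cofibration $A\hookrightarrow X$. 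Applying this to the square whose top edge is $\hat h\colon A\to\mathcal E_\epsilon$, whose bottom edge is $\id_X\colon X\to X$, whose left edge is the inclusion and whose right edge is $p$ (the square commutes since $p\circ\hat h$ is the inclusion $A\hookrightarrow X$), the filler is precisely a section $\hat H\colon X\to\mathcal E_\epsilon$ extending $\hat h$, which is what we want.

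The main obstacle is verifying that the restricted projection $p\colon\mathcal E_\epsilon\to X$ is a genuine Hurewicz fibration, i.e.\ that homotopies in the base lift while staying inside the $\epsilon$‑tube. Lifting a base homotopy means sliding the basepoint $f(x)$ of each fibre‑path along a path in $Y$, and the stored $\epsilon$‑small path must be transported so as to remain $\epsilon$‑small around the \emph{moving} basepoint; this is arranged by using the shrinking reparametrization above to collapse the path toward its initial point as the basepoint moves, which preserves the strict bound throughout. I expect this controlled homotopy‑lifting verification to be the only delicate point, and the reason one cannot simply invoke the plain homotopy extension property into $\mathcal E_\epsilon$: an unconstrained extension would keep the $\epsilon$‑bound but destroy the section condition $e_0\circ\hat H=f$, i.e.\ it would move the basepoints away from $f(x)$. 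One may also bypass the abstract statement and build $\hat H$ by hand from a retraction $r\colon X\times I\to(X\times\{0\})\cup(A\times I)$ supplied by the homotopy extension property, using $r$ to interpolate between the constant‑path section $z$ and $\hat h$, and using the shrinking deformation to enforce the $\epsilon$‑bound; in either route the strict inequality survives because every operation only decreases the diameter of the relevant path around its basepoint.
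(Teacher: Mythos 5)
There is a genuine gap, and it sits exactly where you flag the ``only delicate point.'' Your main route needs $p\colon\mathcal E_\epsilon\to X$ to be a Hurewicz fibration, but the mechanism you offer does not establish this: collapsing a fibre path toward its initial point produces a path still based at the \emph{old} point $f(x_0)$, whereas an element of the fibre over $x_s$ must be a path starting at $f(x_s)$; to fix this you must prepend the track of the moving basepoint, and the concatenated path then leaves the $\epsilon$-ball around $f(x_s)$ as soon as $d(f(x_0),f(x_s))$ is comparable to $\epsilon$. Nor does the fibrewise contraction onto the constant-path section help: that only shows $p$ is \emph{shrinkable}, and shrinkable maps need not be fibrations (project $(\{0\}\times[0,1])\cup([0,1]\times\{0\})$ onto the first coordinate: it is shrinkable but the path $t\mapsto t$ has no lift starting at $(0,1)$). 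So the fibration claim is unproved, and any honest proof of it would have to solve essentially the same ``continuously choose how much to shrink'' problem as the lemma itself.

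Your by-hand fallback is the right idea and is in fact the paper's proof, but the step you wave at — ``using the shrinking deformation to enforce the $\epsilon$-bound'' — is the whole content. The raw HEP extension $H=G\circ r$ need not be $\epsilon$-small at all (for $x\notin A$, $r(x,t)$ may land at $(a,s)$ with $a\in A$ far from $x$, so $d(H_t(x),f(x))$ is unbounded a priori); hence ``every operation only decreases the diameter'' does not yield the strict bound, since a quantity larger than $\epsilon$ that merely decreases need not drop below $\epsilon$. What is missing is the continuous selection of the shrinking parameter: one must shrink by a definite amount wherever $H(x,\cdot)$ exits the $\epsilon$-ball, yet not shrink at all on $A$. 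The paper does this by setting $U=\{x\in X:\forall t\in[0,1],\ d(H_t(x),H_0(x))<\epsilon\}$, which is open (compactness of $[0,1]$) and contains $A$ (since $h_t$ is an $\epsilon$-homotopy), choosing a Urysohn function $\delta\colon X\to[0,1]$ with $\delta\equiv 1$ on $A$ and $\delta\equiv 0$ off $U$, and replacing $H_t(x)$ by $H_{t\delta(x)}(x)$ (equivalently, precomposing $r$ with $(x,t)\mapsto(x,t\delta(x))$). Without identifying $U$, its openness, and the Urysohn function, neither of your routes closes.
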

A similar result with a similar proof holds for arbitrary compact pairs~$(X,A)$ when~$Y$ is an ANR (Theorem 4.1.3, p.~265 in \cite{Mill01}).
\begin{proof}
As~$(X,A)$ has the homotopy extension property, there exists a retraction~$r:X\times [0,1]\to (X\times \{0\})\cup (A\times [0,1])$ and a homotopy~$H_t:X\to Y$ extending~$H_0$ and~$h_t$. We define a retraction~$r':X\times [0,1]\to (X\times \{0\})\cup (A\times [0,1])$ so that~$H'_t(x)=H\circ r'(x,t)$ is an~$\epsilon$-homotopy.

Let~$U=\{x\in X:\forall t\in[0,1],d(H_t(x),H_0(x))<\epsilon\}$. As~$[0,1]$ is compact,~$U$ is an open set. Moreover,~$U$ contains~$A$. Let~$\delta:X\to [0,1]$ be a continuous function such that~$\delta(x)=1$ for~$x\in A$ and~$\delta(x)=0$ for~$x\in X\setminus U$. We define~$r'$ as follows:
\begin{equation*}
r'(x,t)=r(x,t\delta(x)).
\end{equation*}

First,~$r'$ is a retraction:~$r'(x,0)=r(x,0)=(x,0)$ and for~$x\in A$,~$r'(x,t)=r(x,t)=(x,t)$. Therefore,~$H'$ is a homotopy extending~$H_0$ and~$h_t$. We claim that~$H'$ is an~$\epsilon$-homotopy, which means that~$d(H'_t(x),H_0(x))<\epsilon$. If~$x\notin U$, then~$H\circ r'(x,t)=H\circ r(x,0)=H(x,0)=H_0(x)$. If~$x\in U$, then~$H\circ r'(x,t)=H\circ r(x,t\delta(x))$ is~$\epsilon$-close to~$H_0(x)$.
\end{proof}

\subsection{Absolute neighborhood retracts (ANRs)}

Absolute Neighborhood Retracts (ANRs) were introduced by Borsuk \cite{Borsuk32}
and play an eminent role in algebraic topology. They have many interesting
properties that we will use in our proofs, we recall them here. 
\begin{definition}
Let~$(X,A)$ be a pair. A \textbf{retraction}~$r:X\to A$
is a continuous function such that~$r|_{A}=\id_{A}$. If a retraction
exists, then we say that~$A$ is a \textbf{retract} of~$X$. 
\end{definition}

\begin{definition}
A compact Hausdorff space~$X$ is an \textbf{absolute neighborhood retract (ANR)} if every copy~$X_0$ of~$X$ in~$Q$ is a retract of an open set containing~$X_0$.
\end{definition}

%

The next theorem is a direct consequence of the main result in \cite{Whitehead48}.
\begin{theorem}\label{prop_quotient_anr}
If~$X$ and~$A\subseteq X$ are compact ANRs, then~$(X,A)$ has the homotopy extension property and~$X/A$ is an ANR.
\end{theorem}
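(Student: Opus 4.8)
The plan is to treat the two assertions separately, deriving both from the fact that adjunction spaces of ANRs along closed ANR subspaces are again ANRs, which is the main result of \cite{Whitehead48}, together with the neighborhood-retract structure that a closed ANR enjoys inside an ANR.

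For the claim that $X/A$ is an ANR, I would realize $X/A$ as an adjunction space. Let $\{\ast\}$ be the one-point space, which is trivially a compact ANR, and let $c:A\to\{\ast\}$ be the constant map. Since $A$ is a closed ANR subspace of the ANR $X$, Whitehead's theorem applies to the attachment of $X$ to $\{\ast\}$ along $c$, and the resulting adjunction space $\{\ast\}\cup_c X$ is an ANR. This adjunction space is obtained from $X$ by collapsing $A$ to a single point, so it is exactly $X/A$, giving the conclusion.

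For the homotopy extension property, I would use the retract characterization: $(X,A)$ has the HEP iff the subspace $W=(X\times\{0\})\cup(A\times[0,1])$ is a retract of $X\times[0,1]$. The first step is to note that $W$ is itself a compact ANR, being the union of the ANRs $X\times\{0\}$ and $A\times[0,1]$ (products of ANRs are ANRs, Theorem IV.7.1 in \cite{Borsuk67}) whose intersection $A\times\{0\}$ is an ANR, so the union theorem (Theorem IV.6.1 in \cite{Borsuk67}) applies. The heart of the matter is then to produce an actual retraction $R:X\times[0,1]\to W$. For this I would use three ingredients: since $A$ is a closed ANR in $X$, there is an open neighborhood $U\supseteq A$ and a retraction $\rho:U\to A$; since $X$ is an ANR and $A$ is compact, the map on $(U\times\{0,1\})\cup(A\times[0,1])$ sending $(x,0)\mapsto x$, $(x,1)\mapsto\rho(x)$ and $(a,t)\mapsto a$ extends to a homotopy $D:U'\times[0,1]\to X$ on a smaller neighborhood $U'\supseteq A$, which is a homotopy rel $A$ from the inclusion to $\iota\circ\rho$; and since $X$ is metrizable there is a Urysohn function $\phi:X\to[0,1]$ with $\phi^{-1}(0)=A$ and $\phi\equiv 1$ off $U'$. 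Combining these, one defines $R$ so that points with $\phi(x)=1$ are sent to $(x,0)$, and for $x\in U'$ one uses $D$ to push $x$ towards $A$ along the portion $t\le\phi(x)$ of the interval and then travels up $A\times[0,1]$ for $t\ge\phi(x)$; this yields a continuous retraction onto $W$.

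The step I expect to be the main obstacle is this last construction of $R$: verifying continuity at the seams (in particular along $\phi^{-1}(1)$ and along $A\times[0,1]$) and checking that $R$ restricts to the identity on $W$ is routine but delicate, and it is here that all three hypotheses are used simultaneously — $A$ a neighborhood retract (from $A$ being an ANR), the homotopy $D$ (from $X$ being an ANR together with compactness of $A$), and the Urysohn function. Alternatively, both conclusions can simply be quoted as the standard statement that a closed ANR inclusion into an ANR is a cofibration with ANR quotient, which is the content of \cite{Whitehead48}.
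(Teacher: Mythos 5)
The paper does not in fact prove this statement: it records it as a direct consequence of the main theorem of \cite{Whitehead48}, namely that an adjunction space of compact ANRs along a closed ANR subspace is again an ANR. Your argument for the quotient is exactly that citation made explicit, realizing $X/A$ as $\{\ast\}\cup_c X$, so there you coincide with the paper's intended route. For the homotopy extension property you take a genuinely different, more self-contained path: rather than quoting the standard package ``a closed ANR inclusion between ANRs is a cofibration'', you construct the retraction $R:X\times[0,1]\to (X\times\{0\})\cup(A\times[0,1])$ by hand from a neighborhood retraction $\rho:U\to A$, an extension homotopy $D$ obtained from the ANR property of $X$ and compactness of $A$, and a Urysohn function. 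This is the classical NDR-pair argument and the strategy is correct; what it buys over the paper is that the reader sees where each hypothesis enters, at the cost of the seam-checking you yourself flag.

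That seam-checking does require one concrete repair. With a single function $\phi$ ($\phi^{-1}(0)=A$, $\phi\equiv 1$ off $U'$) and the rule ``$R(x,t)=(x,0)$ when $\phi(x)=1$, otherwise push with $D$ for $t\le\phi(x)$'', the map is discontinuous at points of $U'$ where $\phi$ reaches $1$: approaching such a point from $\{\phi<1\}$ the first coordinate tends to $D(x,t)$, not to $x$. The standard fix is to separate the two roles of $\phi$: take a cutoff $\psi$ equal to $1$ on a neighborhood $V$ of $A$ and supported in $U'$, define the \emph{global} homotopy $h(x,t)=D(x,t\psi(x))$ (extended by $h(x,t)=x$ off the support of $\psi$), and take $u:X\to[0,1]$ with $u^{-1}(0)=A$ and $u\equiv 1$ off $V$; then
\begin{equation*}
R(x,t)=\begin{cases}(h(x,t/u(x)),0)&\text{if }t\le u(x),\\(h(x,1),t-u(x))&\text{if }t\ge u(x),\end{cases}
\end{equation*}
is continuous, lands in $(X\times\{0\})\cup(A\times[0,1])$ because $h(x,1)\in A$ whenever $u(x)<1$, and restricts to the identity there. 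Alternatively, you can stay closer to the paper's one-line citation even for the HEP: your observation that $W=(X\times\{0\})\cup(A\times[0,1])$ is an ANR by the union theorem already makes $W$ a neighborhood retract of $X\times[0,1]$, and compactness of $X$ lets one precompose with a map $(x,t)\mapsto(x,t\mu(x))$ pushing $X\times[0,1]$ into that neighborhood while fixing $W$, which produces the retraction with no seam analysis at all.
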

%
%
%
%
%
%

The next lemma is an application of Borsuk's homotopy extension theorem that can be found as Theorem V.3.1 in \cite{Borsuk67} or as Exercice 4.1.5 in \cite{Mill01}.

\begin{lemma} \label{lem_ANR_extension}
Let~$Y$ be a compact metric ANR. For every~$\epsilon>0$ there exists~$\delta>0$ such that for all pairs~$(X,A)$ and all continuous functions~$f,g:A\to Y$ with~$d_{A}(f,g)<\delta$, if~$f$ has a continuous extension~$F:X\to Y$ then~$g$ has a continuous extension~$G:X\to Y$ such that~$d_{X}(F,G)<\epsilon$.
\end{lemma}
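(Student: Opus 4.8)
The plan is to reduce the statement to a controlled homotopy extension theorem for maps into the ANR~$Y$, valid for an arbitrary compact pair~$(X,A)$. First I would use the uniform homotopy property of the compact ANR~$Y$ (Lemma \ref{lem_close_homotopic}) to choose, for the given~$\epsilon>0$, a~$\delta>0$ such that any two~$\delta$-close maps into~$Y$ are~$\epsilon$-homotopic; crucially this~$\delta$ depends only on~$Y$ and~$\epsilon$, not on the domain. Then, given~$f,g:A\to Y$ with~$d_A(f,g)<\delta$ and an extension~$F:X\to Y$ of~$f$, there is an~$\epsilon$-homotopy~$h_t:A\to Y$ from~$h_0=f$ to~$h_1=g$, and it suffices to extend~$h_t$ to an~$\epsilon$-homotopy~$H_t:X\to Y$ with~$H_0=F$: then~$G=H_1$ is the desired extension of~$g$, and~$d_X(F,G)<\epsilon$ follows from the pointwise bound~$d(H_t(x),H_0(x))<\epsilon$ together with compactness of~$X$.

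For the extension, I would exploit that a compact ANR is an absolute neighborhood extensor (ANE). Consider the closed set~$B=(X\times\{0\})\cup(A\times[0,1])$ inside~$X\times[0,1]$ and the continuous map~$\psi:B\to Y$ defined by~$\psi(x,0)=F(x)$ and~$\psi(a,t)=h_t(a)$; these prescriptions agree on~$A\times\{0\}$ because~$h_0=f=F|_A$, so~$\psi$ is well-defined and continuous. Since~$Y$ is an ANE, $\psi$ extends to a continuous map~$\Psi:U\to Y$ on an open neighborhood~$U$ of~$B$ in~$X\times[0,1]$. It is here that the hypothesis that the \emph{target}~$Y$ is an ANR does the work, allowing an arbitrary pair~$(X,A)$ with no homotopy extension assumption.

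The control is obtained by shrinking~$U$ before reparametrizing. Set~$W=\{(x,s)\in U:d(\Psi(x,s),F(x))<\epsilon\}$, which is open by continuity of~$\Psi$ and~$F$, and still contains~$B$: the distance is~$0$ on~$X\times\{0\}$ and equals~$d(h_t(a),h_0(a))<\epsilon$ on~$A\times[0,1]$ because~$h_t$ is an~$\epsilon$-homotopy. I would then produce a continuous tube function~$\mu:X\to(0,1]$ with~$\mu|_A\equiv 1$ and~$\{x\}\times[0,\mu(x)]\subseteq W$ for every~$x$, and define~$H_t(x)=\Psi(x,\mu(x)t)$. By construction~$H_0=F$, while~$H_t|_A=h_t$ because~$\mu=1$ on~$A$, and~$d(H_t(x),F(x))<\epsilon$ because~$(x,\mu(x)t)\in W$; so~$H_t$ is the required~$\epsilon$-homotopy.

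The main obstacle is the construction of the tube function~$\mu$ with the correct boundary behaviour. Setting~$v(x)=\sup\{\tau:\{x\}\times[0,\tau]\subseteq W\}$ yields a positive function with~$v|_A=1$, and a routine argument using compactness of~$[0,1]$ and openness of~$W$ shows~$v$ is lower semicontinuous; lower semicontinuity moreover forces~$v(x)\to 1$ as~$x$ approaches~$A$. The delicate point is to minorize~$v$ by a continuous~$\mu$ that is positive everywhere, stays strictly below~$v$ off~$A$ (so that the closed endpoint~$(x,\mu(x))$ still lies in~$W$), and takes the value~$1$ exactly on~$A$. This is the generalized tube lemma over a metrizable base, and it is the step where compactness and metrizability of~$X$ are genuinely used; the remaining verifications are bookkeeping with the metric.
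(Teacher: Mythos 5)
Your argument is correct and is essentially the standard one: the paper does not prove this lemma but cites Borsuk (Theorem V.3.1) and van Mill (Exercise 4.1.5), whose proofs follow exactly your route of combining Lemma \ref{lem_close_homotopic} with a controlled Borsuk homotopy extension into the ANR target, and your reparametrization by a tube function mirrors how the paper itself proves the companion Lemma \ref{lem_controlled_hep}. The only point needing care, which you correctly isolate, is producing a continuous $\mu$ equal to $1$ on $A$ and minorizing the lower semicontinuous tube height; this follows from the tube lemma applied to $A\times[0,1]$ together with a partition of unity (or a Urysohn function), so there is no gap.
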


The following result is Theorem 4.1.1 in \cite{Mill01}.
\begin{lemma}\label{lem_close_homotopic}
Let~$Y$ be a compact metric ANR. For every~$\epsilon>0$ there exists~$\delta>0$ such that for all spaces~$X$, all continuous functions~$f,g:X\to Y$ satisfying~$d_X(f,g)<\delta$ are~$\epsilon$-homotopic.
\end{lemma}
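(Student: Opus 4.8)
The plan is to exploit the defining property of an ANR: realize $Y$ as a retract of an open neighborhood in the Hilbert cube, push the obvious straight-line homotopy of the ambient convex space onto $Y$ via the retraction, and extract a uniform modulus from the uniform continuity of that retraction on a compact set.

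First I would embed $Y$ in the Hilbert cube $Q=[0,1]^\N$. Since $Y$ is a compact ANR, there is an open set $U\subseteq Q$ containing $Y$ and a retraction $r:U\to Y$. Because $Y$ is compact and $U$ is open, there is some $\eta>0$ with $\Ne(Y,\eta)\subseteq U$. The set $\overline{\Ne(Y,\eta/2)}$ is then a closed subset of $Q$, hence compact, and it is contained in $\Ne(Y,\eta)\subseteq U$ (a point of the closure has distance at most $\eta/2<\eta$ to $Y$, using that $d(\cdot,Y)$ is continuous). Consequently $r$ is uniformly continuous on $\overline{\Ne(Y,\eta/2)}$, so there is a modulus $\delta_0>0$ such that any two points of $\Ne(Y,\eta/2)$ at distance less than $\delta_0$ are sent by $r$ to points at distance less than $\epsilon$. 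I would then set $\delta=\min(\eta/2,\delta_0)$; note that $\delta$ depends only on $Y$ and $\epsilon$, not on $X$ nor on the particular maps, which is exactly the uniformity over all spaces $X$ demanded by the statement.

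Given continuous $f,g:X\to Y$ with $d(f(x),g(x))<\delta$ for all $x$, I would form the straight-line homotopy in $Q$, $H_t(x)=(1-t)f(x)+tg(x)$, using that $Q$ is convex under coordinatewise convex combination. A direct computation with the metric $d(x,y)=\sum_i 2^{-i}|x_i-y_i|$ gives $d(H_t(x),f(x))=t\,d(f(x),g(x))\leq d(f(x),g(x))<\delta$, so every $H_t(x)$ lies in $\Ne(f(x),\delta)\subseteq\Ne(Y,\eta/2)\subseteq U$ and $r(H_t(x))$ is defined. Setting $h_t=r\circ H_t:X\to Y$ yields a homotopy that is jointly continuous in $(t,x)$ and satisfies $h_0=r\circ f=f$ and $h_1=r\circ g=g$, since $r$ fixes $Y$ pointwise. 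Finally, because $H_t(x)$ and $f(x)$ both lie in $\Ne(Y,\eta/2)$ and are within $\delta\leq\delta_0$ of each other, the choice of $\delta_0$ gives $d(h_t(x),h_0(x))=d(r(H_t(x)),r(f(x)))<\epsilon$, so $h_t$ is an $\epsilon$-homotopy from $f$ to $g$.

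The one step to state with care — and the real content — is the passage from pointwise to uniform control: the bound $d(h_t(x),f(x))<\epsilon$ must hold simultaneously for all $x\in X$ and all $t\in[0,1]$, with a single $\delta$ valid for every space $X$. This works precisely because $\delta_0$ is a uniform continuity modulus for $r$ on the fixed compact set $\overline{\Ne(Y,\eta/2)}$ and therefore knows nothing about $X$; the argument factors entirely through the ambient neighborhood of $Y$ in $Q$, with $X$ entering only through the values $f(x),g(x)$. A minor routine point I would verify is that the coordinatewise linear map $H$ is continuous into $Q$ with the metric $d$, which holds because $d$ induces the product topology and each coordinate of $H$ is continuous.
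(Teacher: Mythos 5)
Your proof is correct. Note that the paper does not actually prove this lemma: it is stated with a bare citation (Theorem 4.1.1 in van Mill's book), so there is no internal argument to compare against. What you have written is essentially the standard proof behind that reference, specialized to a compact ANR sitting in the Hilbert cube: retract a neighborhood $U$ of $Y$, push the straight-line homotopy of the convex ambient space $Q$ through the retraction, and obtain the required uniformity (over all spaces $X$ and all pairs $f,g$) from the uniform continuity of $r$ on the compact set $\overline{\Ne(Y,\eta/2)}$. This buys a self-contained, elementary argument whose uniformity in $X$ is completely transparent, exactly the point you correctly single out as the real content. One detail you should state explicitly: the lemma concerns a compact \emph{metric} ANR $Y$ with a given metric, whereas your $\delta$ is produced for the metric $Y$ inherits from its embedding in $Q$. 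Since any two compatible metrics on a compact metrizable space are uniformly equivalent (continuity of the identity map in both directions, plus compactness), this costs only a routine adjustment of the constants $\epsilon$ and $\delta$ at the beginning and end of the argument, but without that remark the proof as written establishes the statement only for the inherited metric.
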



\section{Cone and join}\label{sec_cone_join}
The cone of a space and the join of two spaces are classical constructs in topology. We present their counterparts for pairs.

\subsection{Cone}\label{sec_cone}
\begin{definition}[Cone]
Let~$X$ be a topological space. The \textbf{cone} of~$X$ is
the quotient of~$X\times [0,1]$ under the equivalence relation~$(x,0)\sim (x',0)$. The \textbf{open cone} of~$X$ is the space~$\OC X=\C X\setminus X$, where~$X$ is embedded in~$\C{X}$ as~$X\times \{1\}$.
\end{definition}
The equivalence class of~$X\times\{0\}$ is called the \textbf{tip} of the cone. If~$X$ is embedded in~$Q$, then a realization of~$\C X$ in~$Q\cong[0,1]\times Q$ is given by
\begin{equation*}
\C X\cong\{(t,t x):t\in[0,1],x\in X\}.
\end{equation*}

\begin{definition}[Cone pair]
Let~$(X,A)$ be a pair. Its \textbf{cone pair} is the pair~$\C{X,A}=(\C X,X\cup\C A)$. Its \textbf{open cone pair} is the pair~$\OC{X,A}=(\OC{X},\OC{A})$.
\end{definition}

In particular,
\begin{align*}
\C{L,\emptyset}&=(\C{L},L)\\
\OC{L,\emptyset}&=(\OC{L},\emptyset).
\end{align*}

\paragraph{Special symbol.}
We introduce a symbol~$\unit$, which will be treated as a space but is purely formal. It enables to consider a singleton as a cone and is a unit for the join (see Section \ref{sec_join}). It can be thought as the boundary of the singleton, as in the definition of reduced homology. It is analogous to the empty simplicial complex~$\{\emptyset\}$, which behaves differently from the void complex~$\emptyset$. We define
\begin{equation}\label{eq_unit_cone}
\C{\unit}=\OC{\unit}=\{0\},
\end{equation}
where the point~$0$ is thought as the tip of a degenerate cone. We take the convention that~$\Sp_{-1}=\unit$.

\begin{example}
Let~$n\in\mathbb{N}$. One has
\begin{align*}
\C{\Sp_{n},\emptyset}&=(\BB_{n+1},\Sp_{n}),\\
\C{\BB_{n},\Sp_{n-1}}&=(\BB_{n+1},\Sp_{n}),
\end{align*}
with the tip at the center of~$\BB_{n+1}$ and in~$\Sp_n$ respectively. Note that the particular case~$n=0$ is consistent with the convention~$\Sp_{-1}=\unit$.
\end{example}

\subsection{Join}\label{sec_join}

\begin{definition}\label{def_join}
If~$X,Y$ are topological spaces, then their \textbf{join}~$X*Y$ is the quotient of~$X\times Y\times [0,1]$ under the equivalence relation~$(x,y,0)\sim (x,y',0)$ and~$(x,y,1)\sim (x',y,1)$.
\end{definition}

\begin{remark}[Special symbol]
In the literature, a convention is to let~$\emptyset$ be a unit for the join, i.e.~to define~$X*\emptyset$ as~$X$. Definition \ref{def_join} rather makes~$\emptyset$ an absorbing element, the unit element will be~$\unit$:
\begin{align*}
 X*\emptyset=\emptyset*X&=\emptyset,\\
X*\unit=\unit*X&=X.
 \end{align*}
\end{remark}

If~$X$ and~$Y$ are embedded in~$Q$, then~$X*Y$ can be realized as
\begin{equation*}
X*Y\cong\{(t,(1-t)x,ty):t\in[0,1],x\in X,y\in Y\}.
\end{equation*}
We also realize~$X*\unit\cong\{(0,x,0):x\in X\}$ and~$\unit*Y\cong\{(1,0,y):y\in Y\}$.

The cone and join constructs are intimately related:~$\C{X}=X*\{0\}$. It is proved in Brown \cite{Brown06} (Corollary 5.7.4, \S 5.7, p.~196) that a product of cones is a cone. More precisely, let~$X,Y$ be compact metrizable spaces. There is a homeomorphism
\begin{equation*}
\C{X*Y}\to \C{X}\times\C{Y}
\end{equation*}
which restricts to a homeomorphism
\begin{equation*}
X*Y\to (\C{X}\times Y)\cup (X\times \C{Y}).
\end{equation*}

We will use the following consequences:
\begin{proposition}[The product of cones]\label{prop_cone_product}
Let~$X,Y$ be compact metrizable spaces, or~$\unit$. One has
\begin{align*}
\C{X}\times\C{Y}&\cong\C{X*Y}\\
\OC{X}\times\OC{Y}&\cong\OC{X*Y}.
\end{align*}
\end{proposition}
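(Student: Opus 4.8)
The plan is to deduce both homeomorphisms from Brown's result stated just above, namely that for compact metrizable spaces~$X,Y$ there is a homeomorphism~$\Phi:\C{X*Y}\to\C{X}\times\C{Y}$ which restricts to a homeomorphism~$X*Y\to(\C{X}\times Y)\cup(X\times\C{Y})$. The first identity of the proposition is then literally (the inverse of)~$\Phi$, so the only new content is the open-cone version and the degenerate cases involving~$\unit$.

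For the open cones, the key observation is that a homeomorphism carries the complement of a subspace bijectively onto the complement of its image, and restricts to a homeomorphism between these complements. Since~$\OC{X*Y}=\C{X*Y}\setminus(X*Y)$ by definition,~$\Phi$ maps~$\OC{X*Y}$ homeomorphically onto~$(\C{X}\times\C{Y})\setminus((\C{X}\times Y)\cup(X\times\C{Y}))$. I would then compute this complement set-theoretically: a point~$(a,b)\in\C{X}\times\C{Y}$ belongs to the removed union iff~$a\in X$ or~$b\in Y$, hence it survives iff~$a\in\C{X}\setminus X=\OC{X}$ and~$b\in\C{Y}\setminus Y=\OC{Y}$, so the complement is exactly~$\OC{X}\times\OC{Y}$. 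This yields the homeomorphism~$\OC{X*Y}\cong\OC{X}\times\OC{Y}$.

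It remains to treat the cases where~$X$ or~$Y$ equals~$\unit$, which I would handle directly from the conventions~$\C{\unit}=\OC{\unit}=\{0\}$ and~$X*\unit=\unit*X=X$. For example, if~$Y=\unit$ then~$\C{X}\times\C{\unit}=\C{X}\times\{0\}\cong\C{X}=\C{X*\unit}$ and~$\OC{X}\times\OC{\unit}=\OC{X}\times\{0\}\cong\OC{X}=\OC{X*\unit}$; the symmetric case~$X=\unit$ and the doubly degenerate case~$X=Y=\unit$ are identical.

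Since Brown's homeomorphism does all the real work, no step presents a genuine obstacle; the only points requiring care are verifying the complement computation so that exactly~$\OC{X}\times\OC{Y}$ (and not a larger or smaller set) appears, and checking that~$\Phi$ restricts \emph{onto} these complements rather than merely \emph{into} them---which holds precisely because~$\Phi$ is a bijection carrying~$X*Y$ onto~$(\C{X}\times Y)\cup(X\times\C{Y})$.
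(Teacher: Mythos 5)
Your proposal is correct and follows essentially the same route as the paper: the first identity is Brown's Corollary 5.7.4, and the open-cone version comes from restricting that homeomorphism to complements via the set-theoretic identity $(\C{X}\times\C{Y})\setminus((\C{X}\times Y)\cup(X\times\C{Y}))=\OC{X}\times\OC{Y}$, with the $\unit$ cases checked directly from the conventions. The only detail the paper treats that you omit is the (trivial) case $Y=\emptyset$, where $\C{X}\times\C{\emptyset}=\emptyset=\C{X*\emptyset}$, which falls outside the scope of Brown's result but is immediate.
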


\begin{proof}
The first equality is Corollary 5.7.4 in \cite{Brown06} for the general case. We need to check the particular cases when~$Y$ is~$\emptyset$ or~$\unit$. One has~$\C{X}\times \C{\emptyset}=\emptyset=\C{X*\emptyset}$ and~$\C{X}\times\C{\unit}=\C{X}\times \{0\}=\C{X*\unit}$, and the symmetric cases are similar. The second equality can be proved as follows for~$X,Y\neq\unit$,
\begin{align*}
\OC{X}\times\OC{Y} & =(\C{X}\setminus X)\times(\C{Y}\setminus Y)\\
 & =(\C{X}\times \C{Y})\setminus((\C{X}\times Y)\cup(X\times\C{Y}))\\
 & \cong\C{X*Y}\setminus (X*Y)\\
 & =\OC{X*Y},
\end{align*}
and when~$Y=\unit$,~$\OC{X}\times\OC{\unit}\cong\OC{X}=\OC{X*\unit}$.
\end{proof}

\subsubsection{Join of pairs}

The previous result extends to cone pairs. Note that in all the pairs~$(X,A)$ and~$(Y,B)$ that we consider,~$X$ and~$Y$ are never~$\emptyset$ or~$\unit$.
\begin{definition}
The \textbf{join} of two pairs~$(X,A)$ and~$(Y,B)$ is
\begin{equation*}
(X,A)*(Y,B)=(X*Y,X*B\cup A*Y).
\end{equation*}
\end{definition}


The cone pair is a particular case of the join of pairs:
\begin{align*}
(X,A)*(\{0\},\unit)&=(\C{X},X\cup\C{A})\\
&=\C{X,A},\\
(X,\emptyset)*(\{0\},\unit)&=(\C{X},X)\\
&=\C{X,\emptyset}.
\end{align*}

\begin{proposition}[Product of cone pairs]\label{prop_cone_pair_product}A product of (open) cone pairs is a (open) cone pair:
\begin{align*}
\C{X,A}\times\C{Y,B}&\cong\C{(X,A)*(Y,B)}\\
\OC{X,A}\times\OC{Y,B}&\cong\OC{(X,A)*(Y,B)}.
\end{align*}
\end{proposition}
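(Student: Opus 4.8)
The plan is to realize the desired homeomorphism as a restriction of Brown's homeomorphism $\Phi\colon\C{X*Y}\to\C X\times\C Y$ from Proposition \ref{prop_cone_product}, and to verify that it carries the second component of one pair exactly onto the second component of the other. First I would unravel both second components. Applying the cone-pair operation to the join pair $(X,A)*(Y,B)=(X*Y,\,A*Y\cup X*B)$ gives
\begin{equation*}
\C{(X,A)*(Y,B)}=\bigl(\C{X*Y},\,(X*Y)\cup\C{A*Y\cup X*B}\bigr),
\end{equation*}
while expanding the product of pairs gives
\begin{equation*}
\C{X,A}\times\C{Y,B}=\Bigl(\C X\times\C Y,\ (\C X\times Y)\cup(X\times\C Y)\cup(\C A\times\C Y)\cup(\C X\times\C B)\Bigr).
\end{equation*}
Since $\Phi$ is already a homeomorphism on the first components, the problem reduces to checking that $\Phi$ sends the first boxed set onto the second.

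Next I would decompose $\C{A*Y\cup X*B}$. Because the cone functor commutes with unions of subspaces of a common base, one has $\C{A*Y\cup X*B}=\C{A*Y}\cup\C{X*B}$ as subspaces of $\C{X*Y}$. Then I would use that Brown's homeomorphism is natural in both arguments: applying naturality to the inclusions $A\hookrightarrow X$ and $B\hookrightarrow Y$ shows that $\Phi$ restricts to the Brown homeomorphisms $\C{A*Y}\xrightarrow{\sim}\C A\times\C Y$ and $\C{X*B}\xrightarrow{\sim}\C X\times\C B$, so that $\Phi(\C{A*Y}\cup\C{X*B})=(\C A\times\C Y)\cup(\C X\times\C B)$. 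Combined with the restriction $\Phi(X*Y)=(\C X\times Y)\cup(X\times\C Y)$ already recorded in Proposition \ref{prop_cone_product}, taking the union yields exactly the target set, which identifies the two cone pairs.

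For the open cone pairs I would argue identically. First I would note that the open cone also commutes with unions, $\OC{A*Y\cup X*B}=\OC{A*Y}\cup\OC{X*B}$: indeed, if $p\in\C{A*Y\cup X*B}\setminus(A*Y\cup X*B)$ lies in one cone $\C{Z_i}$, then $p\notin Z_i$, hence $p\in\OC{Z_i}$. Then, as above, $\Phi$ restricts to the open-cone homeomorphisms $\OC{A*Y}\xrightarrow{\sim}\OC A\times\OC Y$ and $\OC{X*B}\xrightarrow{\sim}\OC X\times\OC B$, so the same union computation matches $\OC{A*Y\cup X*B}$ with $(\OC A\times\OC Y)\cup(\OC X\times\OC B)$, which is precisely the second component of $\OC{X,A}\times\OC{Y,B}$.

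I expect the only delicate point to be the naturality of Brown's homeomorphism: Proposition \ref{prop_cone_product} as stated records only its restriction to $X*Y$, so I must invoke that Brown's explicit homeomorphism (Corollary 5.7.4 in \cite{Brown06}) is natural in its two arguments in order to conclude that a single $\Phi$ restricts simultaneously and compatibly to $\C{A*Y}$, $\C{X*B}$ and their open-cone analogues. The degenerate cases where $A$ or $B$ equals $\emptyset$ or $\unit$ would be checked directly from the conventions $\C\emptyset=\emptyset$ and $\C\unit=\{0\}$, exactly as in the proof of Proposition \ref{prop_cone_product}.
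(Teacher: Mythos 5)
Your proof is correct and follows essentially the same route as the paper's: both reduce to Proposition \ref{prop_cone_product} for the first components and then match the second components piece by piece, identifying $(X*Y)\cup\C{X*B}\cup\C{A*Y}=(X*Y)\cup\C{X*B\cup A*Y}$ on one side with the four-term expansion $(\C{X}\times Y)\cup(X\times\C{Y})\cup(\C{X}\times\C{B})\cup(\C{A}\times\C{Y})$ on the other. The only difference is that you make explicit the naturality of Brown's homeomorphism needed for all these restrictions to come from a single map, a point the paper leaves implicit behind its $\cong$ signs.
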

\begin{proof}
The first component of~$\C{X,A}\times \C{Y,B}$ is~$\C{X}\times\C{Y}=\C{X*Y}$ by Proposition \ref{prop_cone_product}. Its second component is
\begin{align*}
&(\C{X}\times(Y\cup\C{B}))\cup((X\cup \C{A})\times \C{Y})\\
&=(\C{X}\times Y)\cup (X\times \C{Y})\cup (\C{X}\times\C{B})\cup (\C{A}\times\C{Y})\\
&\cong(X*Y)\cup \C{X*B}\cup\C{A*Y}\\
&=(X*Y)\cup \C{X*B\cup A*Y}.
\end{align*}

The first component of~$\OC{X,A}\times \OC{Y,B}$ is~$\OC{X}\times\OC{Y}\cong\OC{L}$ by Proposition \ref{prop_cone_product}. Its second component is
\begin{align*}
(\OC{X}\times\OC{B})\cup(\OC{A}\times\OC{Y})&\cong
\OC{X*B}\cup\OC{A*Y}\\
&=\OC{X*B\cup A*Y}.
\end{align*}
\end{proof}

\section{Hopf's extension and classification theorems}\label{sec_hopf}

We recall two classical results: Hopf's extension and classification theorem. They are usually expressed in terms of cohomology. The following statements using homology can be found in Hurewicz-Wallman \cite{HurWal41}. They are stated in terms of \v{C}ech homology for compact spaces in \cite{HurWal41}. We state them for compact ANRs using singular homology, which is equivalent to \v{C}ech homology for these spaces.

Hopf's extension theorem is Theorem VIII.1' in \cite{HurWal41} (\S VIII.6, p.~147).

\begin{theorem}[Hopf's extension theorem]\label{thm_hopf_extension}Let~$n\in\N$. 
Let~$(X,A)$ be a compact pair, where~$X$ and~$A$ are ANRs and~$\dim(X)\leq n+1$. For a continuous function~$f:A\to\Sp_n$, the following statements are equivalent:
\begin{itemize}
\item $f$ has a continuous extension~$F:X\to\Sp_n$,
\item $\ker i_*\subseteq \ker f_*$,
\end{itemize}
where~$i:A\to X$ is the inclusion map and
\begin{align*}
i_*:H_n(A;\RZ)&\to H_n(X;\RZ)\\
f_*:H_n(A;\RZ)&\to H_n(\Sp_n;\RZ)
\end{align*}
are the homomorphisms induced by~$i$ and~$f$.

The equivalence holds for~$n=0$ or~$1$, without any dimension assumption about~$X$.
\end{theorem}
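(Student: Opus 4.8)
The plan is to deduce this statement directly from the classical Hopf extension theorem, which Hurewicz and Wallman establish for compact spaces using \v{C}ech homology (their Theorem VIII.1$'$). Since the only difference between their formulation and ours is the replacement of \v{C}ech homology $\cH_n(\cdot;\RZ)$ by singular homology $H_n(\cdot;\RZ)$, the entire content of the proof is to justify this replacement for the compact ANRs $X$, $A$ and $\Sp_n$, after which the classical theorem applies verbatim.

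First I would recall that a compact ANR has the homotopy type of a CW complex, and that on the class of spaces having the homotopy type of a CW complex the \v{C}ech homology functor is naturally isomorphic to singular homology. The key point is that this comparison is a natural transformation: it commutes with the homomorphisms induced by continuous maps. Applying it to the inclusion $i:A\to X$ and to $f:A\to\Sp_n$ (noting that $\Sp_n$ is itself a compact ANR), the singular induced maps $i_*$ and $f_*$ are carried to their \v{C}ech counterparts under the isomorphism. The coefficient group $\RZ$ and the hypothesis $\dim(X)\leq n+1$ are identical in both settings. Consequently the condition $\ker i_*\subseteq\ker f_*$ holds in singular homology if and only if it holds in \v{C}ech homology, and the classical theorem then provides (or refuses) the extension $F:X\to\Sp_n$.

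For the unconditional cases $n=0$ and $n=1$, I would observe that these are already part of the Hurewicz--Wallman statement and transfer in exactly the same way. Conceptually, no dimension bound is needed because the target has no higher homotopy in the relevant range: for $n=1$, $\Sp_1$ is a $K(\Z,1)$, so the primary obstruction in $H^{2}(X,A;\pi_1(\Sp_1))$ is the only obstruction to extending $f$ and it is precisely what the degree-$1$ Hopf condition measures; for $n=0$, extending a map to $\Sp_0$ amounts to extending a partition of $A$ into two clopen pieces, which the degree-$0$ condition governs directly.

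The main obstacle is the one delicate point in the second step: checking that the \v{C}ech--singular comparison really identifies the induced homomorphisms $i_*$ and $f_*$, and not merely the abstract groups. This reduces to citing the comparison isomorphism as a natural transformation of homology theories on the relevant class of spaces, after which naturality makes the identification automatic. I also note a fallback: should the classical theorem be available only in its cohomological form, stating that $f$ extends iff the pullback of a generator of $H^n(\Sp_n;\Z)$ lies in $\im(i^*)$, I would instead invoke the Pontryagin duality between $H_n(\cdot;\RZ)$ and $H^n(\cdot;\Z)$ recorded in the earlier remark to convert that image condition into the kernel condition $\ker i_*\subseteq\ker f_*$.
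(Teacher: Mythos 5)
Your treatment of the main case coincides with the paper's: both cite Hurewicz--Wallman's Theorem VIII.1$'$ (stated for \v{C}ech homology with coefficients in $\RZ$) and transfer it to singular homology via the natural comparison isomorphism on compact ANRs, which identifies $i_*$ and $f_*$ with their \v{C}ech counterparts by naturality. For the unconditional case $n\leq 1$, however, you assert that it is ``already part of the Hurewicz--Wallman statement''; the paper explicitly notes the opposite --- this case is \emph{not} stated there and has to be extracted. Fortunately your conceptual justification carries the argument on its own, and it is essentially the paper's: the paper replaces $(X,A)$ by a homotopy-equivalent CW-pair, applies the dimension-bounded theorem to extend $f$ over the $(n+1)$-skeleton, and then extends cell by cell using that every map $\Sp_k\to\Sp_n$ is null-homotopic for $k\geq n+1$ when $n\leq 1$; your appeal to $\Sp_1$ being a $K(\Z,1)$ (so that only the primary obstruction survives) is the same vanishing of higher homotopy groups packaged in obstruction-theoretic language. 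The paper's skeletal route is marginally cleaner in that it never needs to re-identify the primary obstruction with the kernel condition $\ker i_*\subseteq\ker f_*$ --- that identification is inherited from the already-established bounded case on the $(n+1)$-skeleton --- whereas your phrasing leaves that identification as an unproved assertion. You should also make explicit, as the paper does implicitly, that passing to a CW model and back uses the homotopy extension property to convert an extension of a map homotopic to $f$ into an extension of $f$ itself.
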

For~$n=0$, Theorem \ref{thm_hopf_extension} also holds when considering homomorphisms between \emph{reduced} homology groups~$\widetilde{H}_0(\cdot;\RZ)$.

We present the argument for the particular case~$n=0$ or~$1$, which is not stated in \cite{HurWal41} but follows from the proof.

\begin{proof}
Assume that~$n\leq 1$. We only need to prove the result for CW-pairs~$(X,A)$, because a pair of compact ANRs is homotopy equivalent to a CW-pair. We assume that~$\ker i_*\subseteq\ker f_*$ and prove that~$f$ has an extension. From Hopf's extension theorem for spaces of dimension at most~$n+1$,~$f$ has an extension to the~$n+1$-skeleton of~$X$. For~$k\geq n+1$, every function from~$\Sp_k$ to~$\Sp_n$ is null-homotopic (because~$n=0$ or~$1$), so~$f$ can be inductively extended to every cell of~$X$.
\end{proof}
%
\begin{theorem}[Hopf's classification theorem]\label{thm_hopf_classification}
Let~$n\in\N$. Let~$(X,A)$ be a compact pair, where~$X$ and~$A$ are ANRs and~$\dim(X)\leq n$. For a continuous function~$f:(X,A)\to(\Sp_n,s)$, the following statements are equivalent:
\begin{itemize}
\item $f$ is null-homotopic (as a function of pairs),
\item $f_*:\widetilde{H}_n(X,A;\RZ)\to \widetilde{H}_n(\Sp_n,s;\RZ)$ is trivial.
\end{itemize}

The equivalence holds for~$n\leq 1$, without any dimension assumption about~$X$.
 \end{theorem}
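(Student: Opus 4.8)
The plan is to prove the easy implication by hand and to obtain the converse by transporting the classical \emph{cohomological} Hopf classification theorem through Pontryagin duality, handling the dimension-free cases $n\le 1$ separately. For the forward direction, if $f$ is null-homotopic as a map of pairs then it is homotopic, through maps $(X,A)\to(\Sp_n,s)$, to the constant map $c_s\equiv s$. Homotopic maps of pairs induce the same homomorphism on relative homology, and $c_s$ factors through the one-point pair $(\{s\},\{s\})$, whose $n$-th homology vanishes; hence $f_*=(c_s)_*=0$.

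For the converse under the hypothesis $\dim(X)\le n$, the key is the duality already recorded earlier in the paper: $\widetilde{H}_n(X,A;\RZ)$ is the Pontryagin dual of $\widetilde{H}^n(X,A;\Z)$, and $f_*$ is the dual homomorphism of the pullback $f^*\colon \widetilde{H}^n(\Sp_n;\Z)\to \widetilde{H}^n(X,A;\Z)$ (the duality results of \cite{HurWal41}). Since $\widetilde{H}^n(\Sp_n;\Z)\cong\Z$ is generated by the fundamental class $\iota_n$, a homomorphism out of it is trivial exactly when it annihilates $\iota_n$; because Pontryagin duality is faithful, $f_*$ is trivial if and only if $f^*$ is, i.e. if and only if $f^*(\iota_n)=0$. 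The classical Hopf classification theorem in cohomological form (as in \cite{Hatcher02} or \cite{HurWal41}) asserts that, for a pair with $\dim(X)\le n$, the map $f$ is null-homotopic if and only if $f^*(\iota_n)=0$ in $H^n(X,A;\Z)$. Composing these equivalences yields the statement. Working with singular homology is legitimate here because $X$ and $A$ are compact ANRs, on which singular and \v{C}ech theories coincide.

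Finally I would treat $n\le 1$ without any dimension restriction. For $n=1$ I would exploit that $\Sp_1$ is a $K(\Z,1)$: by Proposition \ref{prop_quotient_pair}, $f$ is null-homotopic if and only if the induced based map $X/A\to\Sp_1$ is, and representability of first cohomology (applicable since $X/A$ is an ANR by Theorem \ref{prop_quotient_anr}, hence homotopy equivalent to a CW complex) gives $[X/A,\Sp_1]_*\cong \widetilde{H}^1(X/A;\Z)\cong H^1(X,A;\Z)$, the bijection sending a map to its pullback of $\iota_1$. Thus null-homotopy is again equivalent to $f^*(\iota_1)=0$, hence to triviality of $f_*$ by the same duality; no bound on $\dim(X)$ is needed because $\Sp_1$ is aspherical. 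For $n=0$ the target $\Sp_0$ is discrete, so $f$ is null-homotopic if and only if it equals the constant map $s$, and a direct inspection of $\widetilde{H}_0(X,A;\RZ)$, whose nonzero classes are detected precisely by path components of $X$ disjoint from $A$, shows this is equivalent to $f_*=0$.

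The main obstacle I anticipate is the bookkeeping in the duality step: one must verify that ``triviality of $f_*$ on $\RZ$-homology'' corresponds \emph{exactly} to ``$f^*(\iota_n)=0$ on $\Z$-cohomology,'' being careful about reduced versus unreduced groups and about the role of the basepoint $s$, which is genuinely delicate only in the degenerate case $n=0$. Everything else is either a formal consequence of functoriality and faithfulness of Pontryagin duality, or a direct citation of the classical theorem.
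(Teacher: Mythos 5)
Your argument is correct, but it takes a genuinely different route from the paper. The paper deduces the classification theorem from its own Hopf extension theorem (Theorem~\ref{thm_hopf_extension}, already stated in $\RZ$-homology): a null-homotopy of $f:(X,A)\to(\Sp_n,s)$ is exactly a continuous extension to $\C{X}$ of the map $g:X\cup\C{A}\to\Sp_n$ equal to $f$ on $X$ and to $s$ on $\C{A}$, and since $\C{X}$ is contractible the kernel condition $\ker i_*\subseteq\ker g_*$ collapses to the triviality of $g_*$, which is $f_*$ up to natural isomorphism; the $n\le 1$ case is then inherited verbatim from the corresponding case of the extension theorem. You instead import the classical \emph{cohomological} classification theorem and translate it via Pontryagin duality between $\widetilde{H}_n(\cdot;\RZ)$ and $\widetilde{H}^n(\cdot;\Z)$ — a duality the paper itself records in its remark on homology versus cohomology, so this step is legitimate for compact ANR pairs — and then you treat $n=1$ separately through the $K(\Z,1)$ representability of $[X/A,\Sp_1]$ and $n=0$ by direct inspection. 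What your route buys is independence from the extension theorem and a conceptual explanation of why no dimension bound is needed for $n\le1$ (asphericity of $\Sp_1$, discreteness of $\Sp_0$); what it costs is reliance on the duality bookkeeping you rightly flag (triviality of $f_*$ on $\RZ$-homology versus $f^*(\iota_n)=0$ on $\Z$-cohomology, reduced versus unreduced groups at $n=0$) and on the fact that $X/A$, being a compact ANR, has the homotopy type of a CW complex so that Eilenberg--MacLane representability applies. The cone-plus-extension argument is shorter here only because the extension theorem is already available in the appendix in exactly the needed $\RZ$-homological form.
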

When~$n=0$ and~$A=\emptyset$, and only in that case, one indeed needs to consider the homomorphism between reduced homology groups.
 \begin{proof}
This result is usually stated in terms of cohomology, and for single spaces rather than pairs (for instance, Theorem VIII.2, \S VIII.6, p.~149 in \cite{HurWal41}). It can be derived from Theorem \ref{thm_hopf_extension} as follows. 
Let~$g:X\cup\C{A}\to\Sp_n$ be defined as~$g=f$ on~$X$ and~$g=s$ on~$\C{A}$. A null-homotopy of~$f$ is a function~$G:\C{X}\to \Sp_n$ extending~$g$. The pair~$(\C{X},X\cup\C{A})$ satisfies all the conditions of Hopf's extension theorem, so~$g$ has extension if and only if~$\ker i_*\subseteq\ker g_*$. As~$\C{X}$ is contractible, its reduced homology groups are trivial so~$\ker i_*$ is the whole group~$\widetilde{H}_n(X\cup \C{A};\RZ)$; therefore,~$g$ has an extension iff~$g_*:\widetilde{H}_n(X\cup\C{A};\RZ)\to \widetilde{H}_n(\Sp_n;\RZ)$ is trivial. Up to natural isomorphisms,~$f_*$ is~$g_*$.
\end{proof}

\section{Coefficients}\label{sec_coef}
In Definition \ref{def_cycle} we used the coefficient group~$\RZ$ to detect relative cycles. One could equivalently use the groups~$\Z_k=\Z/k\Z$ (with~$k\geq 2$). The group~$\Z$ can also give information. Although the next result is probably folklore, we did not find a suitable reference and include a proof for completeness.

\begin{proposition}\label{prop_coefficients}
Let~$(X,A)$ be a compact pair and~$x\in X\setminus A$ be~$n$-Euclidean. The following statements are equivalent:
\begin{itemize}
\item The homomorphism~$H_n(X,A;\RZ)\to H_n(X,X\setminus\{x\};\RZ)$ is non-trivial,
\item The homomorphism~$H_n(X,A;\Z_k)\to H_n(X,X\setminus\{x\};\Z_k)$ is non-trivial for some~$k\geq 2$,
\end{itemize}
and they hold if the homomorphism~$H_n(X,A)\to H_n(X,X\setminus\{x\})$ is non-trivial.
\end{proposition}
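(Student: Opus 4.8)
The plan is to identify the common target group across all coefficient systems and then exploit the naturality of the induced maps in the coefficient group. Write $\phi_G$ for the homomorphism $H_n(X,A;G)\to H_n(X,X\setminus\{x\};G)$ of the statement. First I would record that, since $x$ is $n$-Euclidean, excision applied to an open neighbourhood of $x$ homeomorphic to $\R^n$ gives natural isomorphisms $H_n(X,X\setminus\{x\};G)\cong H_n(\R^n,\R^n\setminus\{0\};G)\cong\widetilde H_{n-1}(\Sp_{n-1};G)\cong G$ and, crucially, $H_{n-1}(X,X\setminus\{x\};\Z)\cong\widetilde H_{n-2}(\Sp_{n-1};\Z)=0$ for every $n\ge 1$. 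Under the identification of the target with $G$, each $\phi_G$ becomes a homomorphism $H_n(X,A;G)\to G$, and the family $(\phi_G)_G$ is natural in $G$: a coefficient homomorphism $\theta\colon G\to G'$ induces change-of-coefficients maps fitting into a commuting square with $\phi_G$, $\phi_{G'}$ and, on the targets, $\theta$ itself.

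The two easy directions then follow from naturality alone. Applying it to the injection $\Z_k\hookrightarrow\RZ$ onto the $k$-torsion subgroup, which is injective on the targets, shows $\phi_{\Z_k}\ne 0\Rightarrow\phi_\RZ\ne 0$, giving one implication of the equivalence. Applying it to the reduction $\Z\to\Z_k$ shows that if $\phi_\Z\ne0$, with image $d\Z$ and $d\ge 1$, then choosing $k=d+1$ forces $\phi_{\Z_k}\ne 0$ because $d\not\equiv 0\pmod{k}$; together with the previous implication this establishes the final clause that $\phi_\Z\ne 0$ makes both conditions hold.

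The remaining implication, $\phi_\RZ\ne 0\Rightarrow\phi_{\Z_k}\ne 0$ for some $k$, is the main obstacle, precisely because $\RZ$ is neither free nor flat, so the $\Tor$ term of the universal coefficient theorem (e.g.~\cite{Hatcher02}) must be handled. I would compare the integral universal coefficient sequences for $(X,A)$ and for $(X,X\setminus\{x\})$; since $H_{n-1}(X,X\setminus\{x\};\Z)=0$, the target sequence has no $\Tor$ term, and the naturality square shows that $\phi_\RZ$ restricted to $\im(H_n(X,A;\Z)\otimes\RZ)$ is $\phi_\Z\otimes\id_\RZ$ up to the identification $\Z\otimes\RZ\cong\RZ$. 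This splits the argument into two cases. If $\phi_\RZ$ is nonzero on this tensor part, then $\phi_\Z\otimes\id_\RZ\ne 0$, hence $\phi_\Z\ne 0$, and the easy direction finishes the job. Otherwise $\phi_\RZ$ vanishes there, so it factors as a nonzero map $\overline\phi\colon\Tor(H_{n-1}(X,A;\Z),\RZ)\to\RZ$ through the boundary $\beta$ of the universal coefficient sequence.

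To conclude in this second case I would pass to $\mathbb{Q}/\Z$. The short exact coefficient sequence $0\to\mathbb{Q}/\Z\to\RZ\to\R/\mathbb{Q}\to 0$ has torsion-free, hence flat, cokernel $\R/\mathbb{Q}$, so the long exact $\Tor$ sequence makes the induced map $\Tor(H_{n-1}(X,A;\Z),\mathbb{Q}/\Z)\to\Tor(H_{n-1}(X,A;\Z),\RZ)$ an isomorphism. Since both universal coefficient sequences are natural in the coefficient group, the inclusion $\mathbb{Q}/\Z\hookrightarrow\RZ$ is compatible with the surjections onto these $\Tor$ terms; combining this with the naturality square for $\phi$ and for $\mathbb{Q}/\Z\hookrightarrow\RZ$, and with $\overline\phi\ne 0$, forces $\phi_{\mathbb{Q}/\Z}\ne 0$ (note that $\Tor$ is always torsion, so the image of $\phi_\RZ$ here indeed lies in the torsion subgroup $\mathbb{Q}/\Z$ of $\RZ$). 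Finally, since singular homology commutes with filtered colimits of coefficients and $\mathbb{Q}/\Z=\varinjlim_k\Z_k$, one has $H_n(X,A;\mathbb{Q}/\Z)=\varinjlim_k H_n(X,A;\Z_k)$ and $\phi_{\mathbb{Q}/\Z}=\varinjlim_k\phi_{\Z_k}$, so a nonzero value of $\phi_{\mathbb{Q}/\Z}$ already occurs at a finite stage, yielding $\phi_{\Z_k}\ne 0$ for that $k$. The delicate points to get right are the naturality of all the identifications in the coefficient group and the exactness inputs ($\R/\mathbb{Q}$ flat, $\Tor$ torsion, homology continuous under direct limits) needed to reduce circle coefficients to finite cyclic ones.
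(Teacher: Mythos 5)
Your argument is correct, and its overall skeleton coincides with the paper's: identify the target $H_n(X,X\setminus\{x\};G)$ with $G$ via excision, note that the corresponding $\Tor$ term vanishes, and run everything through the naturality of the universal coefficient sequence in the coefficient group; the two easy implications ($\Z_k\hookrightarrow\RZ$ injective on targets, and reduction $\Z\to\Z_k$ for $k$ not dividing a nonzero value of the integral map) are exactly the paper's. Where you diverge is the key implication from $\RZ$ to some $\Z_k$. The paper proves an element-level decomposition: every class $c\in H_n(X,A;\RZ)$ splits as (image of the tensor part) plus (image of a class with $\Z_k$ coefficients for some $k$), using that $\Tor(H_{n-1}(X,A),\RZ)$ is the torsion subgroup of $H_{n-1}(X,A)$, and then evaluates the map on the two summands. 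You instead argue at the level of the map: either it is nonzero on the tensor part (reducing to $\Z$), or it factors through $\Tor(H_{n-1}(X,A),\RZ)$, which you pull back along the isomorphism $\Tor(-,\mathbb{Q}/\Z)\cong\Tor(-,\RZ)$ (flatness of the torsion-free quotient $\R/\mathbb{Q}$) and then descend from $\mathbb{Q}/\Z=\varinjlim_k\Z_k$ to a finite stage using continuity of singular homology in the coefficient group. The two mechanisms are equivalent in substance --- both isolate the $\Tor$ part and recognize it as being detected by finite cyclic coefficients --- but the paper's version is more elementary (no colimits, no flatness input), while yours makes the structural reason cleaner: the obstruction genuinely lives in $\Tor$, which only sees $\mathbb{Q}/\Z$, hence some $\Z_k$. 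All the exactness and naturality inputs you flag (excision hypotheses, injectivity of $\Z_k\to\RZ$ on targets, surjectivity of the UCT projection onto $\Tor$, $\Tor$ being torsion) check out, so there is no gap.
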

\begin{proof}
We use the universal coefficient theorem for homology, and the fact that it is functorial in the space and in the coefficient group. For any abelian group~$G$, one has the following commuting diagram:
\begin{equation}\label{eq_univ}
\begin{tikzcd}
0 \arrow{r} & H_n(X,A)\otimes G \arrow{r}{g_1} \arrow{d}{f_1} & H_n(X,A;G) \arrow{r}{g_2} \arrow{d}{f_2} & \Tor(H_{n-1}(X,A),G) \arrow[r] \arrow{d}{f_3} & 0\\
0 \arrow[r]  & \underbrace{H_n(\Sp_n,s)\otimes G}_G \arrow{r}{h_1}  & \underbrace{H_n(\Sp_n,s;G)}_G \arrow{r}{h_2}  & \underbrace{\Tor(H_{n-1}(\Sp_n,s),G)}_0 \arrow[r] & 0
\end{tikzcd}
\end{equation}
We will write~$f^k_1$ for~$G=\Z_k$ and~$f'_1$ for~$G=\RZ$, and similarly for the other homomorphisms.

We first assume that~$f:H_n(X,A)\to H_n(\Sp_n,s)\cong \Z$ is non-trivial and show that~$f^k_2$ is non-trivial for some~$k\geq 2$. Let~$q\in H_n(\Sp_n,s)$ be a non-zero element of the image of~$f$. Let~$k\geq 2$ be an integer that does not divide~$q$, and take~$G=\Z_k$ in \eqref{eq_univ}, so that~$(q,1)$ is not zero in~$H_n(\Sp_n,s)\otimes G$. As~$(q,1)$ belongs to the image of~$f^k_1$,~$f^k_1$ is non-trivial. As~$h^k_1$ is injective,~$h^k_1\circ f^k_1=f^k_2\circ g^k_1$ is non-trivial, so~$f^k_2$ must be non-trivial.

We now assume that~$f_2^k$ is non-trivial for some~$k\geq 2$ and show that~$f'_2$ is non-trivial. The inclusion~$\Z_k\to\RZ$ sending~$[p]$ to~$[p/k]$ induces homomorphisms from~$H_n(Y,B;\Z_k)$ to~$H_n(Y,B;\RZ)$ which make the following diagram commute:
\begin{equation}\label{eq_univ3}
\begin{tikzcd}
H_n(X,A;\Z_k) \arrow{r}{i_2} \arrow{d}{f^k_2} & H_n(X,A;\RZ) \arrow{d}{f'_2}\\
H_n(\Sp_n,s;\Z_k) \arrow{r}{j_2}   & H_n(\Sp_n,s;\RZ)
\end{tikzcd}
\end{equation}
Note that~$j_2:\Z_k\to\RZ$ is the inclusion so it is injective. Therefore, the non-triviality of~$f^k_2$ implies the non-triviality of~$f'_2$.

Finally, we assume that~$f'_2$ is non-trivial and show that~$f^k_2$ is non-trivial for some~$k\geq 2$. Again, the inclusion~$\Z_k\to\RZ$ sending~$[m]$ to~$[m/k]$ induces the following commuting diagram:
\begin{equation}\label{eq_univ2}
\begin{tikzcd}
0 \arrow{r} & H_n(X,A)\otimes \Z_k \arrow{r}{g^k_1} \arrow{d}{i_1} & H_n(X,A;\Z_k) \arrow{r}{g^k_2} \arrow{d}{i_2} & \Tor(H_{n-1}(X,A),\Z_k) \arrow[r] \arrow{d}{i_3} & 0\\
0 \arrow[r]  & H_n(X,A)\otimes \RZ \arrow{r}{g'_1}   & H_n(X,A;\RZ) \arrow{r}{g'_2}  & \Tor(H_{n-1}(X,A),\RZ)  \arrow[r] & 0
\end{tikzcd}
\end{equation}

\begin{claim}\label{claim_sum}
For every~$c\in H_n(X,A;\RZ)$, there exist~$k\geq 2$,~$d\in H_n(X,A)\otimes \RZ$ and~$e\in H_n(X,A;\Z_k)$ such that~$c=g'_1(d)+i_2(e)$.
\end{claim}
\begin{proof}
Note that~$\Tor(H_{n-1}(X,A),\RZ)$ is the torsion subgroup of~$H_{n-1}(X,A)$. Therefore,~$g'_2(c)$ is a torsion element of~$H_{n-1}(X,A)$, so there exists~$k\geq 2$ such that~$g'_2(c)\in \im(i_3)$. As~$g^k_2$ is surjective,~$g'_2(c)=i_3\circ g^k_2(e)$ for some~$e\in H_n(X,A;\Z_k)$. One has~$g'_2\circ i_2(e)=i_3\circ g^k_2(e)=g'_2(c)$, so~$c-i_2(e)\in\ker(g'_2)=\im(g'_1)$, therefore there exists~$d\in H_n(X,A)\otimes\RZ$ such that~$c-i_2(e)=g'_1(d)$.
\end{proof}

Now assume that~$f'_2$ is non-trivial, and let~$c\in H_n(X,A;\RZ)$ have a non-zero image under~$f'_2$. One has~$c=g'_1(d)+i_2(e)$ as in Claim \ref{claim_sum}, so~$f'_2\circ g'_1(d)\neq 0$ or~$f'_2\circ i_2(e)\neq 0$. If~$f'_2\circ g'_1(d)\neq 0$, then~$f'_1$ is non-trivial by \eqref{eq_univ} so~$f$ is non-trivial and we can apply the first argument of the proof, implying that~$f^{r}_2$ is non-trivial for some~$r\geq 2$. If~$f'_2\circ i_2(e)\neq 0$, then~$f^k_2$ is non-trivial by \eqref{eq_univ3}.
\end{proof}

It can happen that the homomorphism is trivial with the coefficient group~$\Z$ but not with some~$\Z_k$. It happens for instance if~$X$ is an~$n$-manifold that is not orientable, with~$k=2$.

%
%
%
%
\end{document}